\definecolor{labelkey}{rgb}{0.6,0,1}
\newcommand{\ep}{\varepsilon}
\newcommand{\bd}{\begin{displaymath}}
\newcommand{\ed}{\end{displaymath}}
\newcommand{\be}{\begin{eqnarray}}
\newcommand{\ee}{\end{eqnarray}}
\newcommand{\ba}{\begin{array}}
	\newcommand{\ea}{\end{array}}
\newtheorem{theorem}{\bf Theorem}[section]
\newtheorem{proposition}[theorem]{\bf Proposition}
\newtheorem{definition}[theorem]{\bf Definition}
\newtheorem{example}[theorem]{Example}
\newtheorem{lemma}{\textbf{Lemma}}[section]
\newtheorem{remark}[theorem]{\bf Remark}
\providecommand{\keywords}[1]{\textbf{\textit{Keywords: }} #1}
\begin{document}

\title{Convergence of Adaptive Filtered Schemes for First Order Evolutionary Hamilton-Jacobi Equations
\thanks{The three authors are members of the INdAM Research Group GNCS.}
}

\author{Maurizio Falcone\footnotemark[1] \and Giulio Paolucci\footnotemark[1] \and Silvia Tozza\footnotemark[2]}

\maketitle

\renewcommand{\thefootnote}{\fnsymbol{footnote}}

\footnotetext[1]
{
	Dipartimento di Matematica,  ``Sapienza" Universit{\`a}  di Roma,
	P.le Aldo Moro, 5 - 00185 Rome, Italy
	({\tt e-mail:falcone@mat.uniroma1.it,paolucci@mat.uniroma1.it})
}


\footnotetext[2]
{
	Dipartimento di Matematica e Applicazioni “Renato Caccioppoli”,  Universit{\`a} degli Studi di Napoli Federico II, 
	Via Cintia, Monte S. Angelo, I-80126 Napoli, Italy 
	({\tt e-mail: silvia.tozza@unina.it})
}

\begin{abstract}
We consider a class of ``filtered'' schemes for first order time dependent Hamilton-Jacobi equations and prove a general convergence result for this class of schemes. A typical  filtered scheme is obtained mixing a high-order scheme and a monotone scheme  according to a filter function $F$ which decides where the scheme has to switch from one scheme to the other. A crucial role for this switch is played by a parameter $\varepsilon=\varepsilon({\Delta t,\Delta x})>0$ which  goes to 0 as the time and space steps $(\Delta t,\Delta x)$ are going to 0 and  does not depend on the time $t_n$, for each iteration $n$. The tuning of this parameter in the code is rather delicate and has an influence on the global accuracy of the filtered scheme. 
Here  we  introduce an adaptive and automatic choice of  $\varepsilon=\varepsilon ^n (\Delta t, \Delta x)$ at every iteration modifying the classical set up. The adaptivity is controlled by  a smoothness indicator which selects  the regions where we modify the regularity threshold $\varepsilon^n$.
A convergence result and some error estimates for the new adaptive filtered scheme are proved, this analysis relies on the properties of the scheme and of the smoothness indicators. 
Finally,  we  present  some numerical tests to compare the  adaptive filtered scheme with other methods. \\

\noindent \keywords{High-order Filtered schemes, Hamilton-Jacobi equations, Convergence, Smoothness indicators}
\end{abstract}

\section{Introduction}
\label{intro}
Here we propose and analyze a new adaptive filter scheme and prove its convergence to the viscosity solution of the scalar evolutionary Hamilton-Jacobi equation
\begin{equation}
	\label{HJ}
	\left\{
	\begin{array}{ll}
		v_t+H(v_x)=0,\qquad&(t,x)\in[0,T]\times\mathbb{R}, \\
		v(0,x)=v_0(x),&x \in \mathbb{R},
	\end{array}
	\right.
\end{equation}
where Hamiltonian $H$ and the initial data $v_0$ are Lipschitz continuous functions. A precise result of existence and uniqueness in the framework of weak viscosity solutions can be found in \cite{B94} and the precise setting of assumptions will be given in Sect.  \ref{Sec:AFS}.\\
The accurate numerical solution of Hamilton-Jacobi (HJ) equations is a challenging topic of growing importance in many fields of application, e.g. control theory, KAM theory, image processing and material science. Due to the lack of regularity of viscosity solutions, this issue is delicate and the construction of high-order methods can be  rather  complicated and the proof of convergence is challenging. It is well known that simple monotone schemes are at most first order accurate
as shown in \cite{CL84} so monotonicity should be abandoned to get high-order convergence.
Our goal  is to present a rather simple way to construct convergent schemes to the viscosity solution $v$ of \eqref{HJ} with the property to be of high-order in the region of regularity.\\
In recent years a general approach to the construction of high-order methods using filters has been proposed by Lions and Souganidis in \cite{LS95} and further developed by Oberman and  Salvador \cite{OS15}. 
It is also interesting to mention that filtered schemes were also used for second order problems by Froese and Oberman \cite{FO13} for the Monge-Ampere equation and, more recently, by Bokanowski, Picarelli and Reisinger \cite{BPR18} who studied second order time dependent HJB equations. 
Let us remind that a typical feature of a filtered scheme $S^F$   is that at the node $x_j$  the scheme is  combination of a high-order scheme $S^A$ and a monotone scheme $S^M$ according to a filter function $F$. The scheme is written as 
\begin{equation}
	\label{eq:FS}
	u^{n+1}_j \equiv S^F(u^n)_j := S^{M}(u^n)_j+\varepsilon\Delta t F\bigg(\frac{S^{A}(u^n)_j-S^{M}(u^n)_j}{\varepsilon\Delta t}\bigg), \quad  j\in \mathbb{Z},
\end{equation}
where $u^{n+1}_j:=u(t_{n+1},x_j)$ is the numerical approximation at time $t_{n+1}$ and node $x_j$, $\varepsilon=\varepsilon(\Delta t,\Delta x)>0$  {\em  is a fixed parameter} going to 0 as $(\Delta t,\Delta x)$ is going to 0 and  
does not depend on $n$.
Filtered schemes  are high-order accurate  where the solution is smooth, monotone otherwise, and  this feature is crucial to prove a convergence result as in  \cite{BFS16}.  Note that the choice of the parameter $\varepsilon$ is delicate because it plays a crucial role in the switching
so its tuning is rather important (see  \cite{BFS16} for a detailed discussion of this point). Then it seems natural to adapt its choice to the regularity of the solution in the cell via a smoothness indicator.
Here we improve  the filtered scheme \eqref{eq:FS} introducing an {\em adaptive and automatic choice of the parameter $\varepsilon=\varepsilon ^n$ at every iteration}. To this end, we introduce a smoothness indicator  to select the regions where we have to update the regularity threshold $\varepsilon^n$, this  indicator is  chosen according to the analysis proposed in \cite{JP00} although other proposals with similar properties can be applied. 

To set this paper into perspective let us remind that the construction of high order methods for hyperbolic equations  has been a very active research area started by  the seminal paper \cite{HOEC86}. Several techniques have been proposed to improve the accuracy leading to essentially non oscillatory schemes ENO and weighted ENO (so called WENO) for conservation laws as in \cite{JS96,AB03,HAP05,ABBM11}, for a survey on these high-order  techniques we refer to \cite{S98,ZS16}. More recently a centered  and more efficient version (called CWENO) has been proposed in \cite{CPSV18}. We should also mention that  high-order methods have been proposed for Hamilton-Jacobi either extending the ENO approach as in \cite{JP00,KNP01,CFR05} or by semi-lagrangian techniques as extensively discussed in \cite{FF13}. For a recent survey on the numerical approximation of Hamilton-Jacobi equations we refer the interested reader to \cite{FF16}. 

The paper is organized as follows:\\
In Sect. \ref{Sec:AFS} we construct  the new adaptive filtered scheme and present in detail all its building blocks, the main assumptions are given there. 
Sect. \ref{sec:indicators} is focused on the analysis of the smoothness indicators in one dimension. In Sect. \ref{sec:convergence} we state and prove the main convergence result (that was announced in \cite{FPT19}), some technical lemmas are proved in Appendix \ref{sec:appendix} at the end of this paper.    Finally in Sect.~\ref{sec:tests} we  present several tests to show the effectiveness of the adaptive scheme with respect to the basic filtered scheme and to other state-of-the-art methods. Sect.~\ref{sec:conclusions} contains the conclusions with final comments. 

\section{A new Adaptive Filtered scheme}
\label{Sec:AFS}
Consider the  first order evolutionary Hamilton-Jacobi equation \eqref{HJ} where the hamiltonian $H$ and the initial data $v_0$ are Lipschitz continuous functions. It is well known that with these assumptions we have the existence and uniqueness of the viscosity solution. Notice that to keep the ideas clear we are considering the most simple scalar case with the hamiltonian depending only on the derivative of the solution, with more general situations following directly.

Our aim is to present a rather simple way to construct convergent schemes to the viscosity solution $v$ of (\ref{HJ}) with the property to be of high-order whenever some regularity is detected.
Starting from the ideas of \cite{BFS16} on \emph{filtered schemes}, we proceed in this study introducing a procedure to compute the regularity threshold $\varepsilon$ in an automatic way, in order to exploit the local regularity of the solution.

Let us begin defining a uniform grid in space $x_j=j\Delta x$, $j\in \mathbb Z$, and in time $t_n=t_0+n\Delta t$, $n\in[0,N] \cap \mathbb{N}$, with $(N-1)\Delta t< T\leq N\Delta t$. Then, we compute the numerical approximation $u^n_j=u(t_n,x_j)$ with the simple formula
\begin{equation}
	\label{AFS}
	u_j^{n+1}=S^{AF}(u^n)_j:=S^M(u^n)_j+\phi_j^n\varepsilon^n\Delta t F\left(\frac{S^A(u^n)_j-S^M(u^n)_j}{\varepsilon^n \Delta t}\right),
\end{equation}
where $u^{n+1}_j:=u(t_{n+1},x_j)$, $S^M$ and $S^A$ are respectively the monotone and the high-order scheme, $F$ is the \emph{filter function} needed to switch between the two schemes, $\varepsilon^n$ is the switching parameter at time $t_n$ and $\phi_j^n$ is the \emph{smoothness indicator function} at the node $x_j$ and time $t_n$. More details on the components of the schemes will be given in the following sections.

Notice that if $\varepsilon^n\equiv \varepsilon\Delta x$, with $\varepsilon>0$ and $\phi_j^n\equiv 1$, we get the Basic Filtered Scheme \eqref{eq:FS}.
\subsection{Assumptions on the schemes}
In this section we present the basic components of our scheme, which are a monotone finite difference scheme $S^M$ and a high-order, possibly unstable, scheme $S^A$. Let us begin by giving the assumptions on the monotone scheme.\\

\emph{Assumptions on $S^M$.}
\begin{description}
	\item [(M1)]The scheme can be written in \emph{differenced form}
	\begin{equation*}\label{eq:FD}
		u^{n+1}_j\equiv  S^{M}(u^{n}_j) := u^{n}_j -\Delta t ~ h^M(D^-u^n_j,D^+u^n_j)
	\end{equation*}
	for a function $h^M(p^-,p^+)$, with $D^{\pm} u^n_j:=\pm \frac{u^n_{j\pm 1}-u^n_j}{\Delta x}$;
	\item [(M2)] $h^M$ is a Lipschitz continuous function;
	\item [(M3)] (Consistency) $\forall v$, $h^M(v,v)=H(v )$;
	\item [(M4)] (Monotonicity) for any functions $u, v$ 
	$\quad u \leq v\quad  \Rightarrow\quad S^M(u) \leq S^M(v)$.	
\end{description}

Under assumption (M2), the consistency property (M3) is equivalent to say that for all functions $v\in C^2([0,T]\times\mathbb R)$, there exists a constant $C_M\geq 0$ independent on $\Delta=(\Delta t,\Delta x)$ such that
\begin{equation}
	\label{consistenza_M}
	\mathcal E_{M}(v)(t,x):=\left|\frac{v(t+\Delta t,x)-S^M(v(t,\cdot))(x)}{\Delta t}\right|\leq C_M\left(\Delta t ||v_{tt}||_\infty +\Delta x||v_{xx}||_\infty\right),
\end{equation}
where $\mathcal E_M$ is the consistency error. The last relation clearly shows the bound on the accuracy of the monotone schemes, which are at most first order accurate even for regular solutions.
\begin{remark}
	As pointed out in \cite{BFS16}, under the Lipschitz assumption (M2) the monotonicity property (M4) can be restated in terms of some quantities that can be easily computed. In fact, it is enough to require, for a.e. $(p^-,p^+)\in \mathbb R^2$,
	\begin{equation}
		\frac{\partial h^M}{\partial p^-}(p^-,p^+)\geq 0,\qquad \frac{\partial h^M}{\partial p^+}(p^-,p^+)\leq 0,
	\end{equation}
	and the \emph{CFL condition}
	\begin{equation}
		\label{cond_CFL}
		\frac{\Delta t}{\Delta x}\left(\frac{\partial h^M}{\partial p^-}(p^-,p^+)-\frac{\partial h^M}{\partial p^+}(p^-,p^+)\right)\leq 1.
	\end{equation}
	We call the \emph{CFL number}, dependent on the hamiltonian of the considered problem, the constant ratio $\lambda:=\frac{\Delta t}{\Delta x}$ such that (\ref{cond_CFL}) is satisfied. Notice that working with explicit finite difference schemes this number can always be computed.

\noindent An important consequence of property (M4) is the \emph{nonexpansivity in $L^\infty$} of the mapping $S^M$ (see \cite{CL84}, page 8), that is, for any functions $u,v$,
\begin{equation}
\label{prop_nonexp}
||S^M(u)-S^M(v)||_\infty\leq||u-v||_\infty.
\end{equation}
\end{remark}
\begin{example}
	We give some examples of monotone schemes in differenced form which satisfy (M1)-(M4). Other examples may be found in the pioneering work \cite{CL84} or in \cite{S98}.
	\begin{itemize}
		\item For the \emph{eikonal equation},
		$$
		v_t+|v_x|=0,
		$$
		we can use the simple numerical hamiltonian 
		\begin{equation}
			\label{hm_eik}
			h^M(p^-,p^+):=\max\{p^-,-p^+\}.
		\end{equation}
		\item For general equations, instead, we recall the \emph{Central Upwind scheme} of \cite{KNP01}
		\begin{equation}
			\label{central_upwind}
			h^M(p^-,p^+):=\frac{1}{a^+ - a^-}\left[a^-H(p^+)-a^+H(p^-) -a^+a^-(p^+-p^-)\right], 
		\end{equation}
		with $a^+=\max\{H_p(p^-),H_p(p^+),0\}$ and $a^-=\min\{H_p(p^-),H_p(p^+),0\}$, using the usual notation $H_p$ for the derivative of $H$ with respect to $v_x$.
		\item Another numerical hamiltonian we could use is the \emph{Lax-Friedrichs hamiltonian}
		\begin{equation}\label{eq:LF_hamiltonian}
			h^M(p^-,p^+):=H\left(\frac{p^-+p^-}{2}\right)-\frac{\theta}{2}(p^+-p^-)
		\end{equation}
		where $\theta>0$ is a constant. The scheme is monotone under the restrictions $\max_{p}|H_p(p)|<\theta$ and $\theta \lambda\leq 1$.
	\end{itemize}
\end{example}

Next, we define the requirements on the high-order scheme.\\

\emph{Assumptions on $S^A$.}
\begin{description}
	\item [(A1)] The scheme can be written in \emph{differenced form}
	\begin{equation*}
		\label{eq:HA}
		u^{n+1}_j=S^{A}(u^{n})_j :=u^{n}_j-\Delta t  h^{A}(D^{k,-}u_j,\dots, D^-u^n_j,D^+u^n_j,\dots,D^{k,+}u^n_j),
	\end{equation*}
	for some function $h^A(p^-,p^+)$ (in short), with $D^{k,\pm} u^n_j:=\pm \frac{u^n_{j\pm k}-u^n_j}{k\Delta x}$;
	\item [(A2)] $h^A$ is a Lipschitz continuous function.
	\item [(A3)] (High-order consistency) Fix $k\geq 2$ order of the scheme, then for all $l=1,\dots,k$ and for all functions $v\in C^{l+1}$, there exists a constant $C_{A,l}\geq 0$ such that
	\begin{align*}
		\mathcal E_{A}(v)(t,x):=&\left|\frac{v(t+\Delta t,x)-S^A(v(t,\cdot))(x)}{\Delta t}\right|\\
		&\leq C_{A,l}\left(\Delta t^l ||\partial^{l+1}_t v||_\infty +\Delta x^l||\partial^{l+1}_x v||_\infty\right).
	\end{align*}
\end{description}
It is interesting to notice that we are not making any assumption on the stability of the high-order scheme, that is because filtered schemes are able to stabilize a possibly unstable scheme.

Before giving some examples of high-order schemes satisfying (A1)-(A3), let us state an interesting property of the solution $v$ of (\ref{HJ}) in case of enough regularity. Notice that we are considering the simplest case of $H$ dependent only on the derivative of $v$.
\begin{lemma}
	\label{lemma_vtk}
	Let $v$ be the solution of (\ref{HJ}). Then, if $v \in C^{r}\left(\Omega_{(t,x)}\right)$, $r\geq2$, where $\Omega_{(t,x)}$ is a neighborhood of a point $(t,x)\in \Omega:=[0,T]\times \mathbb R$, it holds
	\begin{align}
		\label{vtk_formula}
		\frac{\partial^k v(t,x)}{\partial t^k}&=(-1)^k \frac{\partial^{k-2}}{\partial x^{k-2}}\left(H_p^{k}(v_x(t,x))v_{xx}(t,x)\right)\\
		&=(-1)^k \frac{\partial^{k-2}}{\partial x^{k-2}}\left(H_p^{k-1}(v_x(t,x))\frac{\partial}{\partial x}H(v_{x}(t,x))\right), \nonumber
	\end{align}
	for $k=2,\dots,r$.
\end{lemma}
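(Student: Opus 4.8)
The statement is purely about the PDE, not the scheme, so the plan is to differentiate the equation $v_t = -H(v_x)$ repeatedly in $t$ and convert all $t$-derivatives into $x$-derivatives, proceeding by induction on $k$. The base case $k=2$ is the key computation: differentiating $v_t + H(v_x) = 0$ once more in $t$ gives $v_{tt} = -H_p(v_x) v_{xt}$, and since $v_{xt} = (v_t)_x = -\partial_x H(v_x) = -H_p(v_x) v_{xx}$, we get $v_{tt} = H_p(v_x)^2 v_{xx} = (-1)^2 H_p^2(v_x) v_{xx}$, which also equals $-H_p(v_x)\partial_x H(v_x)$, matching both forms in \eqref{vtk_formula} with the empty $0$-th order $x$-derivative in front. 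The regularity $v \in C^r$ with $r \geq 2$ is exactly what is needed to justify that these mixed partials exist and can be interchanged on $\Omega_{(t,x)}$.

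For the inductive step, assume \eqref{vtk_formula} holds for some $k$ with $2 \le k < r$, i.e. $\partial_t^k v = (-1)^k \partial_x^{k-2}\big(H_p^{k-1}(v_x)\,\partial_x H(v_x)\big)$. Apply $\partial_t$ to both sides. On the right, $\partial_t$ commutes with $\partial_x^{k-2}$ (legitimate since $v \in C^r$ and $k-2+1 \le r$), so I need $\partial_t$ of the inner expression $H_p^{k-1}(v_x)\,\partial_x H(v_x)$. Here I would again use $v_{xt} = -H_p(v_x) v_{xx} = -\partial_x H(v_x)$, i.e. $\partial_t$ acting through $v_x$ effectively inserts a factor $-H_p(v_x)$ and turns an $x$-derivative structure appropriately; more precisely $\partial_t\big(f(v_x)\big) = f'(v_x) v_{xt} = -f'(v_x) H_p(v_x) v_{xx} = -H_p(v_x)\,\partial_x\big(f(v_x)\big)$ for any smooth $f$, and a short product-rule argument (or noting $\partial_t \big(\partial_x H(v_x)\big) = \partial_x\big(\partial_t H(v_x)\big) = \partial_x\big(H_p(v_x) v_{xt}\big) = -\partial_x\big(H_p(v_x)\,\partial_x H(v_x)\big)$, wait—better to keep everything in terms of $\partial_t \circ \partial_x = \partial_x \circ \partial_t$ and $v_t = -H(v_x)$). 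The cleanest route: write $\partial_t^{k+1} v = \partial_x^{k-2}\partial_t\big((-1)^k H_p^{k-1}(v_x)\partial_x H(v_x)\big)$ and show the inner $\partial_t$ produces $(-1)$ times $H_p(v_x)$ times the $\partial_x$ of the inner quantity, i.e. $\partial_t(g(v_x)) = -H_p(v_x)\partial_x(g(v_x))$ with $g(v_x) = H_p^{k-1}(v_x)\partial_x H(v_x)$—no, $\partial_x H(v_x)$ is not a function of $v_x$ alone. So instead use: $\partial_t\big(H_p^{k-1}(v_x)\partial_x H(v_x)\big) = \partial_t\big(H_p^{k-1}(v_x)\big)\partial_x H(v_x) + H_p^{k-1}(v_x)\partial_t\partial_x H(v_x)$, and for the second term $\partial_t\partial_x H(v_x) = \partial_x\partial_t H(v_x) = \partial_x\big(H_p(v_x)v_{xt}\big) = -\partial_x\big(H_p(v_x)\partial_x H(v_x)\big)$. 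Combining, one checks the whole thing collapses to $-H_p(v_x)\,\partial_x\big(H_p^{k-1}(v_x)\partial_x H(v_x)\big)$, so $\partial_t^{k+1}v = (-1)^{k+1}\partial_x^{k-1}\big(H_p^k(v_x)\partial_x H(v_x)\big)$, which is \eqref{vtk_formula} at $k+1$. Finally I would note the equivalence of the two displayed right-hand sides follows immediately from $\partial_x H(v_x) = H_p(v_x) v_{xx}$.

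The main obstacle is purely bookkeeping: verifying that after applying the product rule and commuting derivatives, all the intermediate terms recombine into the single clean expression $-H_p(v_x)\partial_x(\cdot)$ rather than leaving a residue. This amounts to recognizing the identity $\partial_t F = -H_p(v_x)\,\partial_x F$ for $F$ of the specific form $H_p^{m}(v_x)\partial_x H(v_x)$; one proves this once (using only $v_{xt} = -\partial_x H(v_x)$ and $\partial_x H(v_x) = H_p(v_x)v_{xx}$) and then the induction runs mechanically. I would also make sure at each step that the order of differentiation invoked does not exceed $r$, so that all partials are classical and Schwarz's theorem applies; this is why the hypothesis is $v \in C^r$ with the conclusion restricted to $k \le r$.
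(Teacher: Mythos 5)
Your overall strategy coincides with the paper's: induction on $k$, the base case $k=2$ computed directly from $v_t=-H(v_x)$, and an inductive step that commutes $\partial_t$ past $\partial_x^{k-2}$ (justified by $v\in C^r$) and applies the product rule to the inner expression. The base case, the use of Schwarz's theorem, and the computation $\partial_t\partial_x H(v_x)=\partial_x\bigl(H_p(v_x)v_{xt}\bigr)=-\partial_x\bigl(H_p(v_x)\partial_x H(v_x)\bigr)$ are all correct; the paper runs the same induction on the first form $(-1)^k\partial_x^{k-2}\bigl(H_p^{k}(v_x)v_{xx}\bigr)$ instead of the second, which is an immaterial difference.

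There is, however, a concrete error in the identity you single out as the engine of the induction. You assert that everything ``collapses to $-H_p(v_x)\,\partial_x\bigl(H_p^{k-1}(v_x)\partial_x H(v_x)\bigr)$'' and, at the end, that the reusable identity is $\partial_t F=-H_p(v_x)\,\partial_x F$ for $F=H_p^{m}(v_x)\partial_x H(v_x)$. That identity holds only for quantities that are functions of $v_x$ alone (as you correctly note, $\partial_x H(v_x)$ is not one), and it is false for this $F$. The identity that your own two-term product-rule computation actually yields is
\begin{equation*}
\partial_t\bigl(H_p^{k-1}(v_x)\,\partial_x H(v_x)\bigr)=-\,\partial_x\bigl(H_p^{k}(v_x)\,\partial_x H(v_x)\bigr),
\end{equation*}
with the extra factor $H_p(v_x)$ \emph{inside} the outer $x$-derivative. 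Indeed, setting $A=H_p^{k-1}(v_x)$ and $B=\partial_x H(v_x)$, one has $\partial_t A=-H_p\partial_x A$ and $\partial_t B=-\partial_x(H_pB)$, hence $\partial_t(AB)=-H_p(\partial_x A)B-A(\partial_x H_p)B-AH_p\partial_x B=-\partial_x(AH_pB)$. The expression $-H_p\partial_x(AB)$ differs from this by $-(\partial_x H_p)AB=-H_{pp}(v_x)v_{xx}\,H_p^{k-1}(v_x)\partial_x H(v_x)$, which does not vanish in general; had you literally used it, you would obtain $(-1)^{k+1}\partial_x^{k-2}\bigl(H_p\,\partial_x(H_p^{k-1}\partial_x H)\bigr)$ rather than $(-1)^{k+1}\partial_x^{k-1}\bigl(H_p^{k}\partial_x H\bigr)$. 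Your final stated conclusion for step $k+1$ is correct, but it follows from the corrected identity, not from the one you wrote; restate the ``prove once and reuse'' lemma accordingly and the proof closes exactly as in the paper.
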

\begin{proof}
	Let us proceed by induction on $2\leq k \leq r$, omitting the dependence on $(t,x)$ to simplify the notation. For $k=2$, we have
	$$
	v_{tt}=\frac{\partial}{\partial t}(-H(v_x))=-H_p(v_x)v_{xt}=-H_p(v_x)\frac{\partial}{\partial x}(-H(v_x))=H^2_p(v_x)v_{xx},
	$$
	and the statement holds in this case. Suppose now that (\ref{vtk_formula}) holds for $2<k<r-1$, then we can compute
	\begin{align*}
		\frac{\partial^{k+1} v}{\partial t^{k+1}}&=\frac{\partial}{\partial t}\left(\frac{\partial^{k} v}{\partial t^k}\right) \\
		&=\frac{\partial}{\partial t}\left((-1)^k \frac{\partial^{k-2}}{\partial x^{k-2}}\left(H_p^{k}(v_x))v_{xx}\right)\right)\qquad\qquad \textrm{ by inductive hypothesis } \\
		&=(-1)^k \frac{\partial^{k-2}}{\partial x^{k-2}}\left(\frac{\partial}{\partial t}\left(H_p^{k}(v_x))v_{xx}\right)\right) \\
		&=(-1)^k \frac{\partial^{k-2}}{\partial x^{k-2}}\left(\frac{\partial}{\partial p}\left(H^k_p(v_x)\right)v_{xt}v_{xx}+H^k_p(v_x)v_{xxt}\right)\\
		&=(-1)^k \frac{\partial^{k-2}}{\partial x^{k-2}}\left(\frac{\partial}{\partial x}(H^k_p(v_x))v_{xt}+H^k_p(v_x)\frac{\partial }{\partial x}(v_{xt})\right)\\
		&=(-1)^k \frac{\partial^{k-1}}{\partial x^{k-1}}\left(H^k_p(v_x)v_{tx}\right)\\
		&=(-1)^{k+1}\frac{\partial^{k-1}}{\partial x^{k-1}}\left(H^{k+1}_p(v_x)v_{xx}\right),
	\end{align*}
	as we wanted.  
\end{proof}
Let us now consider the value of the solution at $v(t+\Delta t,x)$, with $\Delta t>0$ and its Taylor expansion of order $r \geq 2$ around the point $(t,x)$. Using Lemma \ref{lemma_vtk}, we can rewrite
\begin{align}
	\label{exp_v_lw}
	v(t+\Delta t,x)&=v(t,x)+\Delta t v_t(t,x)+\sum_{k=2}^{r}\frac{\Delta t^k}{k!}\frac{\partial^k v(t,x)}{\partial t^k}+O(\Delta t^{r+1}) \nonumber \\
	&=v(t,x)-\Delta t H(v_x(t,x)) +\nonumber \\
	&\qquad\sum_{k=2}^{r}\frac{(-\Delta t)^k}{k!} \frac{\partial^{k-2}}{\partial x^{k-2}}\left(H_p^{k}(v_x(t,x))v_{xx}(t,x)\right)+O(\Delta t^{r+1}),
\end{align}
which for $r=2$ simply reads
\begin{equation}
	\label{lax_wen}
	v(t+\Delta t,x)=v(t,x)-\Delta t H(v_x(t,x))+\frac{\Delta t^2}{2}H^2_p(v_x(t,x))v_{xx}(t,x)+O(\Delta t^3).
\end{equation}
\begin{remark}
	Using this last relation we could show that, assuming (A1)-(A2), the consistency property is equivalent to require that for $l=2,\dots,k$, and for all $v \in C^{l+1},$
	\begin{align}
		\mathcal E_{A}(v)(t,x):=&\left|h^A(D^-v,D^+v)-H(v_x)+\frac{\Delta t}{2}H^2_p(v_x)v_{xx}\right|\nonumber\\
		&\leq C_{A,l}\left(\Delta t^l ||\partial^{l+1}_t v||_\infty +\Delta x^l||\partial^{l+1}_x v||_\infty\right).
		\label{cond_nec_ha}
	\end{align}
\end{remark}
Now, let us give some examples of high-order schemes satisfying (A1)-(A3) with $l=2$.
\begin{example}
	As a first example let us consider the class of schemes obtained combining a \emph{high-order in space numerical hamiltonian} $h^A_*$ and the second order \emph{Runge-Kutta SSP} (or \emph{Heun scheme}). To explain the simple procedure, let us consider the semidiscrete problem
	$$
	u_t=h_*^A(D^-u(t,x),D^+u(t,x))),
	$$
	where $h_*^A$, is a high-order in space numerical hamiltonian of second order,
	\begin{equation}
		h_*^A(D^-v^n_j,D^+v^n_j)=H(v_x(t^n,x_j))+O(\Delta x^2),
		\label{cond_ha_space}
	\end{equation}
	such as the simple second order \emph{central approximation}
	\begin{equation}
		\label{heun}
		h_*^A(D^-u^n_j,D^+u^n_j)=H\left(\frac{D^-u^n_j+D^+u^n_j}{2}\right),
	\end{equation}
	then to obtain the same accuracy in time we discretize using the second order SSP Runge-Kutta scheme, 
\begin{equation}
		\label{RK2}
		\left\{
		\begin{array}{l}
			u^*_j=u^n_j-\Delta t h_*^A(D^{-}u^n_j,D^{+}u^n_j)\\
			u^{n+1}_j=\frac{1}{2}u^n_j+\frac{1}{2}u^*_j-\frac{\Delta t}{2}h_*^A(D^{-}u^*_j,D^{+}u^*_j).
		\end{array}
		\right.
\end{equation}
The scheme can be written in differenced form in the sense of (A1)-(\ref{cond_nec_ha}) defining
	\begin{equation}
		h^A(D^{-}u^n_j,D^{+}u^n_j)=\frac{1}{2}\left[h_*^A(D^{-}u^n_j,D^{+}u^n_j)+h_*^A(D^{-}u^*_j,D^{+}u^*_j)\right].
	\end{equation}
Notice that through this procedure the stencil of the scheme (\ref{heun}) becomes doubled for $h^A$. 
Notice also that this procedure can be easily extended to the case of hamiltonian dependent on the space variable $x$.
\end{example}
\begin{example}
	Then we propose a couple of numerical hamiltonians $h^A$ obtained discretizing directly the formula (\ref{lax_wen}) or, equivalently, obtained from the same \emph{Lax-Wendroff schemes} for conservation laws by the substitution $u^n_j=\frac{v^n_{j+1}-v^n_j}{\Delta x}$. The first is the original \emph{Lax-Wendroff} scheme
	\begin{equation}
		\begin{array}{ll}
			h^A(D^{-}u_j^n,D^{+}u_j^n)=&\frac{1}{2} \left\{H\left(D^+ u_j^n\right)+H\left(D^-u^n_j\right)+\right.\\
			&\quad\left. -\frac{\Delta t}{\Delta x}H_p\left(\frac{D^-u^n_j+D^+u^n_j}{2}\right)\left[H\left(D^+u^n_j\right)-H\left(D^-u^n_j\right)\right]\right\},
		\end{array}
		\label{lw}
	\end{equation}
	and the second is its variation proposed by \emph{Richtmyer},
	\begin{equation}
		\label{lwr}
		h^A(D^{-}u_j^n,D^{+}u_j^n)= H\left(\frac{D^-u^n_j+D^+u^n_j}{2}-\frac{\Delta t}{2 \Delta x}\left[H\left(D^+u^n_j\right)-H\left(D^-u^n_j\right)\right]\right).
	\end{equation}
\end{example}
\begin{example}
	Following the approach of the Lax-Wendroff schemes and making use of the expansion (\ref{exp_v_lw}), we can easily write higher order schemes, in both space and time, using very compact stencils. The idea is simply to discretize directly the above expansion using finite difference approximations of the right order. For example, if we want to write a \emph{fourth order Lax-Wendroff scheme} using only five points, one of the possibilities is to define
	$$
	\begin{array}{ll}
	H_1&=H\left(\frac{u_{j-2}-8u_{j-1}+8u_{j+1}-u_{j+2}}{12\Delta x}\right),\\
	H_2&=H^2_p\left(\frac{u_{j-2}-8u_{j-1}+8u_{j+1}-u_{j+2}}{12\Delta x}\right)\left(\frac{-u_{j-2}+16u_{j-1}-30u_j+16u_{j+1}-u_{j+2}}{12\Delta x^2}\right),\\
	H_3&=\frac{1}{2\Delta x}\left[H_p^3\left(\frac{u_{j+2}-u_j}{2\Delta x}\right)\left(\frac{u_{j+2}-2u_{j+1}+u_j}{\Delta x^2}\right)-H_p^3\left(\frac{u_{j}-u_{j-2}}{2\Delta x}\right)\left(\frac{u_{j}-2u_{j-1}+u_{j-2}}{\Delta x^2}\right)\right],\\
	H_4&=\frac{1}{\Delta x^2}\left[H_p^4\left(\frac{u_{j+2}-u_j}{2\Delta x}\right)\left(\frac{u_{j+2}-2u_{j+1}+u_j}{\Delta x^2}\right)-2H_p^4\left(\frac{u_{j+1}-u_{j-1}}{2\Delta x}\right)\left(\frac{u_{j+1}-2u_{j}+u_{j-1}}{\Delta x^2}\right)\right.\\
	&\left.\qquad+H_p^4\left(\frac{u_{j}-u_{j-2}}{2\Delta x}\right)\left(\frac{u_{j}-2u_{j-1}+u_{j-2}}{\Delta x^2}\right)\right],
	\end{array}
	$$
	and then compute
	\begin{equation}
		\label{lw_4ord}
		h^A(D^-u_j^n,D^+u_j^n)=H_1-\frac{\Delta t}{2}\left[H_2-\frac{\Delta t}{3}\left(H_3-\frac{\Delta t}{4}H_4\right)\right].
	\end{equation}
	It is straightforward to verify that, if the solution $v$ is regular enough, using Taylor expansion we have
	\begin{itemize}
		\item $H_1=H(v_x)+O(\Delta x^4)$,
		\item $H_2=H^2_p(v_x)v_{xx} +O(\Delta x^4)$,
		\item $H_3=\frac{\partial}{\partial x}\left(H_p^3(v_x)v_{xx}\right)+O(\Delta x^2)$,
		\item $H_4=\frac{\partial^2}{\partial x^2}\left(H_p^4(v_x)v_{xx}\right)+O(\Delta x^2)$,
	\end{itemize}
	and that the resulting scheme satisfies (A1)-(A3) with $l=4$. Notice that to obtain fourth order it would have been enough to have approximations of one order lower for $H_2$ and $H_4$, but thanks to the symmetry of the discretizations we can get higher orders without increasing the number of points in the stencil.
\end{example}
\subsection{Filter function}\label{subsec:filters}
In order to couple the schemes and their properties, we need to define a function $F$, called \emph{filter function F}, such that
\begin{description}
	\item [(F1)] $F(x) \approx x$ for $|x|\leq 1$,
	\item [(F2)] $F(x) =0$ for $|x|>1$,
\end{description}
which implies that
\begin{itemize}
	\item If $| S^A-S^M|\leq \Delta t \varepsilon^n$ and $\phi_j^n=1\Rightarrow S^{AF}\approx S^A$
	\item If $| S^A-S^M|> \Delta t \varepsilon^n$ or $\phi^n_j=0 \Rightarrow S^{AF}= S^M$.
\end{itemize}
It is clear that, with just these two requirements, several filter functions can be considered, which differ for regularity properties. Four examples are reported in Fig. \ref{fig_filtri}. 
	The first filter, $F_1$, which we use in our numerical tests, has been defined in 
	\cite{BFS16} as
	\begin{equation}\label{def_F1}
		F_1(x)=\left\{
		\begin{array}{ll}
			x\qquad&\textrm{ if } |x|\leq 1 \\
			0&\textrm{ otherwise,}
		\end{array}
		\right.
	\end{equation}
	which is clearly discontinuous at $x=-1,1$. \\
As a second possibility, we propose the family of regular filter functions given by the formula
	$$
	F(x)=x\exp\left(-c(|x|-a)^b\right),
	$$
	for appropriate choices of the parameters $a$, $b$ and $c$. 
In Fig. \ref{fig_filtri}, the filter $F_2$ belongs to that family with $a=0.25,\ b=20,\ c=4$. 
Functions of that kind are very regular ($F\in C^\infty$) and developing with Taylor we can see that they satisfy the properties (F1)-(F2). \\
Another example of filter functions satisfying (F1)-(F2), continuous but not necessarily derivable, is the following family of functions
\begin{equation}
F(x)=\left\{
\begin{array}{ll}
x \exp\left(-\frac{a}{b-|x|}\right)\qquad&\textrm{ if } |x|\leq b \\
0 &\textrm{ otherwise},
\end{array}
\right.
\end{equation}
varying the parameters $a$ and $b$. 
In Fig. \ref{fig_filtri} we show $F_3(x)$ with $a = 0.001$ and $b=1.05$, for which the function approach better the value $1$ at $x=-1,1$.  \\
Finally, we recall also the filter defined in \cite{FO13} as 
	\begin{equation}
		F_4(x)=\left\{
		\begin{array}{ll}
			x\qquad\qquad&|x|\leq 1\\
			0 & |x|\geq 2 \\
			-x+2 &1\leq x \leq 2\\
			-x-2 &-2\leq x \leq -1,
		\end{array}
		\right.
	\end{equation}
which satisfies (F1)-(F2) with a rather wide transition phase. 

\begin{figure}[t]
	\begin{center}
		\includegraphics[scale=0.33]{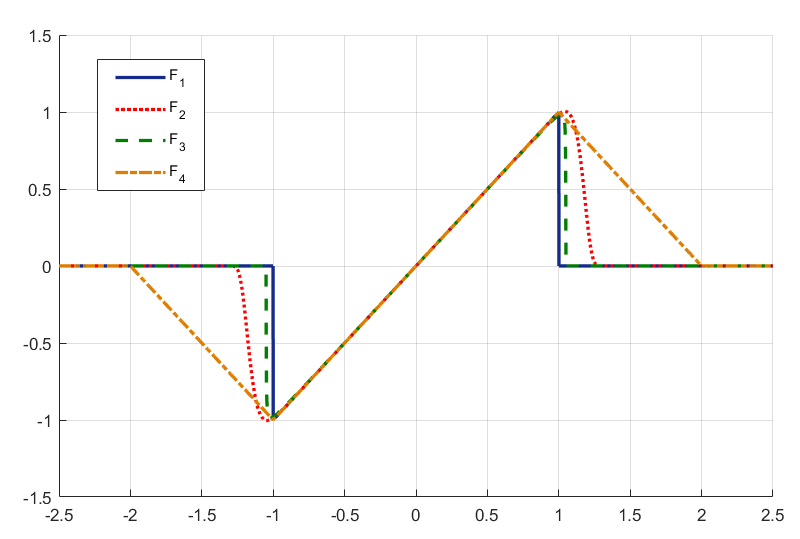}
	\end{center}
	\caption{\footnotesize{Possible choices for the filter function $F$.}\label{fig_filtri}}
\end{figure}
After extensive computations, we noticed that the results obtained with our adaptive filtered (AF) scheme are not sensitive with respect to changes in regularity of the filter function, even with very large transition phases. That is probably because, as we will see in the next section, the parameter $\ep^n$ is designed to obtain the property (F1) whenever possible, then in regions of regularity of the solution the argument of $F$ lies most probably in $[-1,1]$, where all the filter functions are practically the same. Some major differences, instead, can be seen in the results obtained with the basic filtered 
scheme, for which the threshold $\ep$ is fixed at the beginning, as it is highlighted in the introduction of \cite{BFS16}.
\subsection{Tuning of the parameter $\varepsilon^n$}
\label{sect_tune_eps}
The last step is to show how to compute the switching parameter $\varepsilon^n$, which is the real core of the adaptivity of our scheme. Then, if we want the scheme (\ref{AFS}) to switch to the high-order scheme when some regularity is detected, we have to choose $\varepsilon^n$ such that
\begin{equation}
	\label{ep_ineq}
	\left|\frac{S^A(v^n)_j-S^M(v^n)_j}{\varepsilon^n \Delta t}\right|=\left|\frac{h^A(\cdot)-h^M(\cdot)}{\varepsilon^n}\right| \leq 1,\qquad \textrm{ for }(\Delta t,\Delta x)\to 0,
\end{equation}
in the region of regularity at time $t_n$, that is 
\begin{equation}\label{def_region_Rn}
	\mathcal R^n=\left\{x_j : \phi^n_j=1\right\}.
\end{equation} For the moment, to simplify the presentation we assume the existence of a function $\phi$ such that
\begin{equation}
	\label{phi}
	\phi^n_j=\left\{
	\begin{array}{ll}
		1\qquad&\textrm{ if the solution } u^n\textrm{ is regular in }I_j,  \\
		0&\textrm{ if }I_j \textrm{ contains a point of singularity,}
	\end{array}
	\right.
\end{equation}
where $I_j=(x_{j-1},x_{j+1})$, 
referring to the next section for some examples of practical computation of the function $\phi$.

Assuming $v$ sufficiently regular and proceeding by Taylor expansions as in \cite{BFS16}, we have for the monotone scheme 
$$
h^M(D^-v^n_j,D^+ v^n_j)=H(v^n_x(x_j))+\frac{\Delta x}{2}v^n_{xx}(x_j)\left(\partial_{p_+}h^M_j-\partial_{p_-}h^M_j\right)+O(\Delta x^2),
$$
where we used the relation
$$
D^\pm v^n_j=v^n_x(x_j))\pm\frac{\Delta x}{2}v^n_{xx}(x_j)+O(\Delta x^2),
$$
whereas for the high-order scheme, by the consistency property,
$$
h^A(D^-v^n_j,D^+ v^n_j)=H(v^n_x(x_j))-\frac{\Delta t}{2}H^2_p(v^n_x)v_{xx} +O(\Delta t^2)+O(\Delta x^2).
$$
Whence, from (\ref{ep_ineq}) we obtain
\begin{equation}
	\label{epn_ineq}
	\ep^n\geq \left|\frac{\Delta x}{2}v^n_{xx}\left(\partial_{p_+}h^M_j-\partial_{p_-}h^M_j+\lambda H^2_p(v^n_x)\right)+O(\Delta t^2)+O(\Delta x^2)\right|.
\end{equation}
Finally, we use a numerical approximation of the lower bound on the right hand side of the previous inequality to obtain the following formula for $\varepsilon^n$,
\begin{align}
	\label{eps}
	\varepsilon^n=&\max_{x_j\in\mathcal R^n}K\left|H\left(D \, u^n_j\right)-H\left(D \, u^n_j - \lambda\left[H(D^+ u^n_j)-H(D^-u^n_j)\right]\right)\right.\nonumber \\
	&\quad+\left[h^M(D \, u^n_j,D^+ u^n_j)-h^M(D \, u^n_j,D^-u^n_j)\right] \nonumber\\
	&\quad\quad\left.-\left[h^M(D^+ u^n_j,D \, u^n_j)-h^M(D^-u^n_j,D \, u^n_j)\right]\right|,
\end{align}
with $K>\frac{1}{2}$, $\lambda := \frac{\Delta t}{\Delta x}$ and $D \, u^n_j := \frac{u^n_{j+1}-u^n_{j-1}}{2\Delta x}$. Notice that if we assume enough regularity on the solution $v$, then (\ref{eps}) gives a second order approximation of the right hand side of (\ref{epn_ineq}) multiplied by $2K$.

\section{Smoothness indicator function}
\label{sec:indicators}
In the previous section we assumed the existence of a \emph{smoothness indicator function} $\phi$, in the sense that
\begin{equation}\label{def_phi}
	\phi^n_j=\phi(\omega^n_j):=\left\{
	\begin{array}{ll}
		1\qquad&\textrm{ if the solution } u^n\textrm{ is regular in }I_j,  \\
		0&\textrm{ if }I_j \textrm{ contains a point of singularity,}
	\end{array}
	\right.
\end{equation}
where $I_j=(x_{j-1},x_{j+1})$ and $\omega^n_j$ is the \emph{smoothness indicator} at the node $x_j$ depending on the values of the approximate solution $u^n$. The aim of this section is precisely to show a simple construction of a function satisfying (\ref{phi}) which makes use of smoothness indicators widely known in literature. Moreover, in the process we review the theory of the smoothness indicators of \cite{JP00}, defined for the construction of the WENO schemes for (\ref{HJ}),
\begin{equation}
	\label{beta}
	\beta_k = \beta_k(u^n)_j := \sum_{l=2}^r\int_{x_{j-1}}^{x_{j}} \Delta x^{2l-3} \left( P_k^{(l)}(x)\right)^2 dx,
\end{equation}
for $k=0,\dots,r-1$, where $P_k$ is the Lagrange polynomial of degree $r$ interpolating the values of $u^n$ on the stencil $\mathcal S_{j+k}=\{x_{j+k-r},\dots,x_{j+k}\}$.

Then, before proceeding with the construction of $\phi$, let us state a fundamental result on the behavior of the indicators (\ref{beta}).
\begin{proposition}
	\label{prop_beta}
	Assume $f\in C^{r+1}\left(\Omega\setminus\{x_s\}\right)$, with $\Omega$ a neighborhood of $x_s$, and $f^\prime(x_s^-)\not = f^\prime(x_s^+)$. 
	Moreover, just for simplicity, let $f''(x) \neq 0$, $\forall x \in \Omega \setminus {x_s}$. 
	Then, for $k=0,\dots,r-1$ and $j\in \mathbb Z$, the followings are true:
	\begin{enumerate}[i)]
		\item If $x_s \in \Omega\ \setminus \stackrel{\circ}{\mathcal S}_{j+k}\quad \Rightarrow \beta_{k}(f)=O(\Delta x^2)$,
		\item If $x_s \in \ \stackrel{\circ}{\mathcal S}_{j+k}\quad \Rightarrow \beta_k(f)=O(1)$,
	\end{enumerate}
	where 
	$\stackrel{\circ}{\mathcal S}_{j+k}=(x_{j-r+k},x_{j+k})$. 
\end{proposition}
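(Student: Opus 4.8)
The plan is to reduce everything to the behaviour of the degree-$r$ Lagrange interpolant $P_k$ of $f$ on the stencil $\mathcal S_{j+k}$ and of its derivatives on the integration window $[x_{j-1},x_j]$, distinguishing two regimes by whether or not the stencil ``straddles'' the singularity. Write $(s)_+:=\max\{s,0\}$ and let $[\psi]:=\psi(x_s^+)-\psi(x_s^-)$ be the jump of a one-sided quantity at $x_s$, so that $[f']\neq0$ by hypothesis. The elementary dichotomy is: if $x_s\notin\stackrel{\circ}{\mathcal S}_{j+k}$, all nodes of $\mathcal S_{j+k}$ lie on one closed side of $x_s$, hence $f$ agrees on the nodes — up to $O(\Delta x^{r+1})$ — with a single degree-$r$ one-sided Taylor polynomial, and $P_k$ inherits the smooth interpolation estimates; if $x_s\in\stackrel{\circ}{\mathcal S}_{j+k}$, the nodes straddle $x_s$ and the corner of $f$ forces the high divided differences, hence the high derivatives of $P_k$, to blow up as $\Delta x\to0$. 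In both cases the summand $l=2$ in \eqref{beta} is the dominant one.

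For part i), let $T$ be the degree-$\le r$ Taylor polynomial that $f$ matches on the nodes up to $O(\Delta x^{r+1})$. Since $P_k$ reproduces such polynomials, $P_k[f]=T+P_k[f-T]$, and differentiating the standard interpolation bound for $P_k[f-T]$ gives $P_k^{(l)}(x)=T^{(l)}(x)+O(\Delta x^{\,r+1-l})=O(1)$ uniformly on $[x_{j-1},x_j]$, for $l=1,\dots,r$. Inserting this into \eqref{beta} and using $\int_{x_{j-1}}^{x_j}(\,\cdot\,)^2\,dx=O(\Delta x)$, the $l$-th summand is $O(\Delta x^{\,2l-2})$, so $\beta_k(f)=O(\Delta x^2)$; moreover the $l=2$ summand is $\Delta x^2$ times the mean of $(P_k'')^2\approx(T'')^2$ over the window, which is bounded away from $0$ since $T''$ is a one-sided second derivative of $f$ and $f''\neq0$ on $\Omega\setminus\{x_s\}$ — this is exactly where that hypothesis is used, and it gives the sharp order $\beta_k(f)\asymp\Delta x^2$.

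For part ii), I would Taylor-expand $f$ at $x_s$ from each side up to order $r$ and absorb the left/right mismatch into one-sided power corrections: on the stencil,
\[
f(x)=T_-(x)+\sum_{m=1}^{r}\frac{[f^{(m)}]}{m!}\,(x-x_s)_+^{\,m}+R(x),\qquad R(x)=O(\Delta x^{r+1}),
\]
with $T_-$ the degree-$r$ left Taylor polynomial and $[f^{(1)}]=[f']\neq0$. Applying $P_k$ and using $P_k[T_-]=T_-$, the only singular contribution comes from the first corner term: with $y:=(x-x_{j+k-r})/\Delta x$ one has the exact scaling identity $P_k\big[(\,\cdot\,-x_s)_+^{\,m}\big](x)=\Delta x^{\,m}\,\widetilde P_m(y)$, where $\widetilde P_m$ is the interpolant of $(y-y_s)_+^{\,m}$ at the integer nodes $0,1,\dots,r$, depending on $y_s:=(x_s-x_{j+k-r})/\Delta x\in(0,r)$ but not otherwise on $\Delta x$. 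Hence, for $l\geq2$, $P_k^{(l)}(x)=[f']\,\Delta x^{\,1-l}\,\widetilde P_1^{(l)}(y)+O(\Delta x^{\,2-l})$, and writing $[x_{j-1},x_j]$ as a unit window $[q,q+1]\subset[0,r]$ in the variable $y$, the $l=2$ summand of \eqref{beta} is $[f']^2\!\int_q^{q+1}\!\big(\widetilde P_1''(y)\big)^2dy+O(\Delta x)$, while every summand stays $O(1)$; this already gives $\beta_k(f)=O(1)$. For the matching lower bound, note that $(y-y_s)_+$ is continuous and piecewise affine with one genuine break at $y_s\in(0,r)$, so it cannot coincide with an affine function at the $\geq3$ integer nodes (a suitable node triple has nonzero second divided difference); thus $\deg\widetilde P_1\geq2$, $\widetilde P_1''\not\equiv0$, its square has positive integral over $[q,q+1]$, and $\beta_k(f)$ is bounded below by a positive constant — at least while $x_s$ keeps a fixed distance from $\partial\stackrel{\circ}{\mathcal S}_{j+k}$.

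The delicate point is this last lower bound in part ii): one has to exclude an accidental cancellation of the singular part of $P_k$ on the fixed window $[x_{j-1},x_j]$ — which reduces to checking $\widetilde P_1''\not\equiv0$, a finite verification over the few combinatorial patterns (which unit window $[q,q+1]$, and which node-cell $(m,m+1)$ contains $y_s$) — and to treat the degenerate geometry in which $x_s$ lies very close to an endpoint of the stencil, where $\widetilde P_1''$ is small and $\beta_k$ degrades continuously toward the $O(\Delta x^2)$ behaviour of part i). Everything else — the Taylor bookkeeping, the scaling identity $P_k[(\,\cdot\,-x_s)_+^{\,m}]=\Delta x^m\widetilde P_m$, and the classical interpolation estimates of part i) — is routine.
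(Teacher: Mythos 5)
Your argument is correct and reaches the same conclusions, but part ii) follows a genuinely different route from the paper. Part i) is essentially identical in substance: both reduce to $P_k^{(l)}=f^{(l)}+O(\Delta x^{r+1-l})$ on the window (you phrase it via reproduction of the one-sided Taylor polynomial plus the interpolation bound for the remainder, the paper invokes the interpolation error estimate directly), and both use $f''\neq 0$ to pin the order at exactly $\Delta x^2$ via the $l=2$ summand. For part ii), the paper writes $P_k$ in Newton form, rescales to $Q_k(y)=P_k(x_j+hy)$ so that $h^{2l-3}\int(P_k^{(l)})^2=h^{-2}\int_{-1}^0(Q_k^{(l)})^2$, and then computes the undivided differences across the kink by an explicit binomial identity (their Lemma A.1), showing that the differences of order above the break position are asymptotically $h\,[f']\cdot\binom{i-2}{i_s-1}(-1)^{i-i_s+1}$ (with an extra $a_s$-dependent term when $x_s$ is not a grid node), hence at least one is nonzero of size $h[f']$. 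Your decomposition $f=T_-+\sum_m\frac{[f^{(m)}]}{m!}(\,\cdot\,-x_s)_+^m+R$ together with the scaling identity $P_k\bigl[(\,\cdot\,-x_s)_+^m\bigr]=\Delta x^m\widetilde P_m(y)$ isolates exactly the same singular contribution — the undivided differences the paper computes are precisely those of the ramp $(y-y_s)_+$ — but packages it so that the combinatorial lemma is replaced by the one-line observation that the interpolant of a genuinely broken piecewise-affine function at $r+1\geq 3$ integer nodes has degree at least $2$. That is cleaner and arguably more transparent; what the paper's computation buys in exchange is an explicit expression for the limiting constant $C_{rk}$ and an explicit identification of the exceptional positions $a_s=(i-i_s)/(i-1)$ at which individual coefficients vanish. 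Two small remarks: your worry about ``accidental cancellation on the fixed window'' is already settled by your own degree argument, since a not-identically-zero polynomial $\widetilde P_1''$ has $\int_q^{q+1}(\widetilde P_1'')^2\,dy>0$ on every nondegenerate interval, so no further case check is needed; and the non-uniformity of the $O(1)$ lower bound as $x_s$ approaches $\partial\!\stackrel{\circ}{\mathcal S}_{j+k}$, which you flag, is equally present (and equally unaddressed) in the paper, where the fractional position $a_s$ is implicitly held fixed as $\Delta x\to 0$.
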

We skip the proof, which is rather technical, but the interested reader can find it in Appendix \ref{sec:appendix}.
\begin{remark}
	\label{def_sigma}
	Notice that we could avoid the restrictions on $f^{\prime\prime}$ at points of regularity by adding a small quantity $\sigma_{\Delta x} := \sigma \Delta x^2$, for some constant $\sigma>0$, to the indicators $\beta_k$ and consider instead
	\begin{equation}\label{beta_k_sigma}
		\widetilde{\beta}_k:=\beta_k+\sigma_{\Delta x}. 
	\end{equation}
This is necessary in order to avoid a reduction of accuracy at points such that $f^{\prime\prime} =0$, as it has been thoroughly discussed in \cite{ABM10} in the case of discontinuous functions. We will use this assumption in our numerical tests, choosing $\sigma=1$.
\end{remark}
Our aim is to identify the points (or the intervals) in which a function $f$ presents a singularity in the first derivative using only its nodal values $f_j$, $j\in \mathbb Z$.  
Let us focus the attention on a point $x_j$ of the grid and consider the simplest case of $r=2$, which is enough for our purpose.
Let us consider separately the intervals $(x_{j-1},x_{j}]$ and $[x_{j},x_{j+1})$ defining
\begin{equation}\label{beta_k_m}
	\beta^-_k := \Delta x\int_{x_{j-1}}^{x_{j}} ( P_k^{\prime\prime}(x))^2 dx  \,\, =\left(\frac{f_{j+k}-2f_{j+k-1}+f_{j+k-2}}{\Delta x}\right)^2,
\end{equation}
for $k=0,1$, where $P_0$, $P_1$ are the polynomials interpolating the function, respectively, on the stencils $\{x_{j-2},x_{j-1},x_{j}\}$ and $\{x_{j-1},x_{j},x_{j+1}\}$, and symmetrically
\begin{equation}\label{beta_k_p}
	\beta^+_k := \Delta x\int_{x_{j}}^{x_{j+1}} ( P_k^{\prime\prime}(x))^2 dx \,\, = \left(\frac{f_{j+k+1}-2f_{j+k}+f_{j+k-1}}{\Delta x}\right)^2,
\end{equation}
for $k=0,1$, where now $P_0$, $P_1$ are the interpolating polynomials on the stencils $\{x_{j-1},x_{j},x_{j+1}\}$ and $\{x_{j},x_{j+1},x_{j+2}\}$. From the definition it is clear that $(\beta^+)_j=(\beta^-)_{j+1}$ so we have to compute the quantities just once (note that these quantities are always nonnegative). 
Then, we define as in \cite{JP00}
\begin{equation}
	\label{def_alpha_pm}
	\alpha^\pm_k=\frac{1}{(\beta^\pm_k+\sigma_{\Delta x})^2},
\end{equation}
with $\sigma_{\Delta x} := \sigma \Delta x^2$ the parameter we introduced in Remark \ref{def_sigma}, and focus on the information given by the interpolating polynomial on $\{x_{j-1},x_{j},x_{j+1}\}$ defining 
$$
\omega_+=\frac{\alpha^+_0}{\alpha^+_0+\alpha^+_1} \quad \textrm{ and }\quad\omega_-=\frac{\alpha^-_1}{\alpha^-_0+\alpha^-_1},
$$
to inspect the regularity, respectively, on $[x_j,x_{j+1})$ and for $(x_{j-1},x_j]$. 

By Prop. \ref{prop_beta} and Remark \ref{def_sigma} we know that $\widetilde\beta_k= O(\Delta x^2)$ if there is no singularity in the stencil, and $\widetilde\beta_k=O(1)$ otherwise, so in presence of a singularity we can only fall in one of the following cases:
\begin{itemize}
	\item If $x_{j-2}<x_s\leq x_{j-1}$, then ${\widetilde\beta}^-_0=O(1)$, ${{\widetilde\beta}}^-_1={\widetilde\beta}^+_0=O(\Delta x^2)$, ${\widetilde\beta}^+_1=O(\Delta x^2)$,
	\item If $x_{j-1}<x_s< x_{j}$, then ${\widetilde\beta}^-_0=O(1)$, ${\widetilde\beta}^-_1={\widetilde\beta}^+_0=O(1)$, ${\widetilde\beta}^+_1=O(\Delta x^2)$,
	\item If $x_s= x_{j}$, then ${\widetilde\beta}^-_0=O(\Delta x^2)$, ${\widetilde\beta}^-_1={\widetilde\beta}^+_0=O(1)$, ${\widetilde\beta}^+_1=O(\Delta x^2)$,
	\item If $x_{j}< x_s< x_{j+1}$, then ${\widetilde\beta}^-_0=O(\Delta x^2)$, ${\widetilde\beta}^-_1={\widetilde\beta}^+_0=O(1)$, ${\widetilde\beta}^+_1=O(1)$,
	\item If $x_{j+1}\leq x_s< x_{j+2}$, then ${\widetilde\beta}^-_0=O(\Delta x^2)$, ${\widetilde\beta}^-_1={\widetilde\beta}^+_0=O(\Delta x^2)$, ${\widetilde\beta}^+_1=O(1)$,
\end{itemize}
with $x_s$ point of singularity. Now, we can compute 
\begin{eqnarray}
	\frac{\alpha_1^\pm-\alpha_0^\pm}{\alpha_0^\pm}&=&\frac{(\beta_0^\pm+\sigma_{\Delta x})^2-(\beta_1^\pm+\sigma_{\Delta x})^2}{(\beta_1^\pm+\sigma_{\Delta x})^2} \nonumber \\
	&=&\left(\frac{\beta_0^\pm-\beta_1^\pm}{\beta_1^\pm+\sigma_{\Delta x}}\right)\left(\frac{\beta^\pm_0+\beta^\pm_1+2\sigma_{\Delta x}}{\beta_1^\pm+\sigma_{\Delta x}}\right),
	\label{expansion_alpha}
\end{eqnarray}
which, noticing that, if the function is smooth in both stencils of $\beta_0^\pm$ and $\beta_1^\pm$, we have
\begin{align}
	\label{sviluppo_tau}
	&\frac{\beta_0^\pm-\beta_1^\pm}{\beta_1^\pm+\sigma_{\Delta x}}=-2\Delta x\frac{f_j^{\prime \prime}f_j^{\prime\prime\prime}}{(f^{\prime\prime})^2+\sigma}+O(\Delta x^2)= O(\Delta x)\\
	&\frac{\beta^\pm_0+\beta^\pm_1+2\sigma_{\Delta x}}{\beta_1^\pm+\sigma_{\Delta x}}=2+O(\Delta x)=O(1), \nonumber
\end{align}
leads to
\begin{equation}
	\label{relazione_alpha}
	\alpha^\pm_1=\alpha^\pm_0(1+O(\Delta x)).
\end{equation}
Whence we can deduce that if the solution is regular enough in both stencils 
\begin{equation}
	\label{relazione_omega}
	\omega_\pm = \frac{1}{2}+O(\Delta x).
\end{equation}
On the other hand, if there is a singularity in at least one of the stencils, by Prop. \ref{prop_beta} and the definition \eqref{def_alpha_pm} we have that 
\begin{equation}
	\alpha^\pm_k=
	\left\{
	\begin{array}{ll}
		O(1)\qquad&\textrm{ if } f \textrm{ is not smooth in }\stackrel{\circ}{\mathcal S}_{j+k} \\
		O(\Delta x^{-4})&\textrm{ if } f \textrm{ is smooth in }\stackrel{\circ}{\mathcal S}_{j+k},
	\end{array}
	\right.
	\label{alpha_01}
\end{equation}
then it is easy to verify that the behavior of our $\omega_\pm$ falls in the following cases:
\begin{itemize}
	\item If $x_{j-2}<x_s\leq x_{j-1}$, then $\omega_-= 1+O(\Delta x^4)$, $\omega_+ = 1/2+O(\Delta x)$
	\item If $x_{j-1}<x_s< x_{j}$, then $\omega_-= O(1)$, $\omega_+= O(\Delta x^4)$
	\item If $x_s= x_{j}$, then $\omega_-= O(\Delta x^4)$, $\omega_+= O(\Delta x^4)$
	\item If $x_{j}<x_s< x_{j+1}$, then $\omega_-=O(\Delta x^4)$, $\omega_+ = O(1)$
	\item If $x_{j+1}\leq x_s< x_{j+2}$, then $\omega_-= 1/2+O(\Delta x)$, $\omega_+= 1+O(\Delta x^4)$,
\end{itemize}
where with $\omega_\pm = O(1)$ we mean a number dependent on the jump of the derivative. 
Now, defining $\omega_j := \min\{\omega_-,\omega_+\}$ we can rewrite
\begin{equation}\label{def:omega_j}
	\omega_j=
	\left\{
	\begin{array}{ll}
	O(\Delta x^4)\qquad &\textrm{ if }x_{j-1}<x_s<x_{j+1} \\
	\frac{1}{2}+O(\Delta x)& \textrm{ otherwise.}
	\end{array}
	\right.
\end{equation}
Finally, what is left is to define the function $\phi$ such that $\phi=1$ if $\omega$ is close to $\frac{1}{2}$ and $\phi=0$, otherwise. Notice that in the latter are included both cases in which the function has a singularity in the first derivative ($\omega=O(\Delta x^4)$) and when the second derivative is discontinuous ($\omega=O(1)$). The simplest choice is to take
\begin{equation}
	\label{phi_disc}
	\phi(\omega)=\chi_{\{\omega\geq M\}},
\end{equation}
with $M<\frac{1}{2}$, a number possibly dependent on $\Delta x$.

\begin{remark}
	Notice that to construct the function $\phi$ using the indicators (\ref{beta}) with $r=2$ we need only five points to inspect the regularity in $I_j$.
\end{remark}

Next, we show that if we make a particular choice for $M$ we are able to prove the following result, which can be seen as an ``inverse'' of Prop.  \ref{prop_beta} for numerical solutions and, if we use the previous simple construction for $\omega_j$, gives a useful tool for the analysis of the next section. 

Before proceeding, let us remind that we are working with structured grids, then if we consider a one-parameter family of grid values $\{f_j(\Delta x)\}_{j\in J(\Delta x)}$, as $\Delta x$ goes to 0, the indexed family of sets of indices $J(\Delta x)$ is \emph{expanding}, in the sense that if $\Delta x_2<\Delta x_1$, then $J(\Delta x_1)\subset J(\Delta x_2)$, where $J(\Delta x)\subseteq \mathbb Z$, for all $\Delta x>0$. Moreover, we define $I_s(\Delta x)$ as the set of indices $j$ such that $\phi_j=0$ and assume, for simplicity, $|I_s(\Delta x)|< \overline I_s$, where $\overline I_s$ is a positive constant. 

\begin{lemma}\label{lemma_bound_B}
Let $\omega$ be computed using (\ref{beta_k_m})-(\ref{beta_k_p}) and $\phi$ be defined by (\ref{phi_disc}) with $M(\Delta x)=\frac{1}{2}-C\Delta x$, for some constant $C$ such that $0<M(\Delta x)<\frac{1}{2}$. Consider a one-parameter family of sequences $\{f_j(\Delta x)\}_{j\in J(\Delta x)}$ with compact support  in the interval $[-b,b]$, and a partition $\{R_i\}_{i=0,\dots, |I_s|}$ of the regularity set $\mathcal R=\{j\in \mathbb Z: \phi_j=1\}=\bigcup_{i}R_i$, and $\mathcal R=\mathbb Z$ if $I_s=\emptyset$. Then, if for all $i=0,\dots,|I_s|$, there exists $j_{i}\in R_i$, such that $|D^2 f_{j_i}(\Delta x)|<\infty$, we have that
	\begin{equation}
		\label{bound_d2u}
		|D^2 f_j (\Delta x)|=\frac{|f_{j+1}(\Delta x)-2f_j(\Delta x)+f_{j-1}(\Delta x)|}{\Delta x^2}\leq B, \quad \forall j\in \mathcal R,
	\end{equation}
	for a constant $B$ independent of $\Delta x$.
\end{lemma}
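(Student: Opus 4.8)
The plan is to exploit the characterization \eqref{def:omega_j} of $\omega_j$ together with the specific choice $M(\Delta x)=\frac12-C\Delta x$. The key observation is that, in the regularity set $\mathcal R$, the condition $\phi_j=1$ means $\omega_j\geq M(\Delta x)=\frac12-C\Delta x$, and by \eqref{relazione_omega}--\eqref{def:omega_j} this forces $\omega_j=\frac12+O(\Delta x)$ (the alternative $\omega_j=O(\Delta x^4)$ or $\omega_j=O(1)$ being excluded, the former because it would eventually drop below $M$, the latter because a second-derivative jump gives an $O(1)$ value whose size is controlled by that jump, which we will need to rule out or absorb). From the expansion \eqref{sviluppo_tau} the quantitative content of $\omega_j$ close to $\frac12$ is that the ratio $\beta_0^\pm/\beta_1^\pm$ is $1+O(\Delta x)$; unwinding the definitions \eqref{beta_k_m}--\eqref{beta_k_p}, this says that consecutive second differences $D^2f_{j-1},D^2f_j,D^2f_{j+1}$ have comparable magnitudes, with ratios of the form $1+O(\Delta x)$ where the implied constant is uniform (it depends only on $C$ and $\sigma$, through \eqref{expansion_alpha}).

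First I would make the previous sentence precise: show that there is a constant $\kappa$, depending only on $C$ and $\sigma$ but not on $\Delta x$, such that $\phi_j=1$ implies
\begin{equation*}
\frac{|D^2 f_{j+1}|}{|D^2 f_j|}\leq 1+\kappa\,\Delta x
\quad\text{and}\quad
\frac{|D^2 f_{j-1}|}{|D^2 f_j|}\leq 1+\kappa\,\Delta x .
\end{equation*}
This is the one genuinely technical step: one has to invert the relation $\omega_j\geq\frac12-C\Delta x$ through \eqref{expansion_alpha} and \eqref{sviluppo_tau}, being careful that the denominators $\beta_1^\pm+\sigma_{\Delta x}$ are bounded below by $\sigma_{\Delta x}=\sigma\Delta x^2>0$, which is exactly what the regularization $\widetilde\beta$ of Remark \ref{def_sigma} buys us. (If $D^2 f_j=0$ the inequality is read as $|D^2f_{j\pm1}|\leq$ something $\times\Delta x^2$, using $\beta^\pm\le\sigma_{\Delta x}\cdot(\text{const})$; this degenerate case is harmless and can be folded into the induction below.)

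Next I would run a discrete Grönwall / telescoping argument on each block $R_i$. Fix $i$ and the anchor index $j_i\in R_i$ with $|D^2f_{j_i}|<\infty$ — more to the point, $|D^2 f_{j_i}|\leq B_0$ for a constant $B_0$ independent of $\Delta x$, which is part of the hypothesis. Since $R_i$ is a set of \emph{consecutive} indices (a sub-interval of $\mathbb Z$) on which $\phi\equiv1$, walking from $j_i$ to any other $j\in R_i$ and applying the one-step estimate at each node gives
\begin{equation*}
|D^2 f_j|\leq |D^2 f_{j_i}|\,(1+\kappa\Delta x)^{|j-j_i|}\leq B_0\,(1+\kappa\Delta x)^{|j-j_i|}.
\end{equation*}
Because all the $f_j$ have support in $[-b,b]$, every index in $\mathcal R$ lies within $2b/\Delta x$ of the support, hence $|j-j_i|\leq 2b/\Delta x$ (a block $R_i$ has at most $2b/\Delta x+1$ elements), so $(1+\kappa\Delta x)^{|j-j_i|}\leq (1+\kappa\Delta x)^{2b/\Delta x}\leq e^{2\kappa b}$. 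Therefore $|D^2 f_j|\leq B_0\,e^{2\kappa b}=:B$, uniformly in $j\in R_i$ and in $\Delta x$, and since $B$ does not depend on $i$ this holds for all $j\in\mathcal R=\bigcup_i R_i$. (When $I_s=\emptyset$ there is a single block $R_0=\mathcal R=\mathbb Z$, but the support constraint still confines the relevant indices to $|j|\leq b/\Delta x$, so the same bound applies; outside the support $D^2 f_j=0$ trivially.)

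The main obstacle I anticipate is the first step — extracting the clean multiplicative one-step bound $|D^2 f_{j\pm1}|\leq(1+\kappa\Delta x)|D^2f_j|$ from the inequality $\omega_j\geq M(\Delta x)$. One must check that the $O(\Delta x)$ in \eqref{relazione_omega}/\eqref{sviluppo_tau}, which was derived under an \emph{a priori} smoothness assumption on $f$, can instead be \emph{read backwards}: knowing only that the computed $\omega_j$ is within $C\Delta x$ of $\frac12$, deduce the comparability of the $\beta^\pm_k$. This is where the lemma's hypothesis $\phi_j=1$ and the choice $M=\frac12-C\Delta x$ (rather than a fixed $M<\frac12$) are essential — a fixed threshold would only give $\beta_0^\pm/\beta_1^\pm$ bounded, hence $|D^2f_j|$ growing geometrically with ratio a fixed constant $>1$ over $\sim1/\Delta x$ steps, which blows up. The linear-in-$\Delta x$ gap is exactly what makes $(1+\kappa\Delta x)^{O(1/\Delta x)}$ bounded. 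A secondary point to handle cleanly is the second-derivative-discontinuity case ($\omega_j=O(1)$ but not $o(1)$): one should verify that such $j$ cannot satisfy $\omega_j\geq\frac12-C\Delta x$ for $\Delta x$ small, because the $O(1)$ limit value of $\omega_j$ there is strictly separated from $\frac12$ (it is determined by the ratio of the one-sided second derivatives and equals $\frac12$ only when they coincide, i.e. when $f\in C^2$ after all); hence for $\Delta x$ below a threshold depending on that separation these nodes are automatically excluded from $\mathcal R$, and the argument above is unaffected.
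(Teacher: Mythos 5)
Your proposal follows essentially the same route as the paper: a one-step comparability estimate for consecutive second differences with ratio $1+O(\Delta x)$ extracted from $\phi_j=1$, a telescoping walk from the anchor $j_i$ through the block $R_i$, and the observation that the compact support caps the number of steps at $O(b/\Delta x)$ so that $(1+\kappa\Delta x)^{O(1/\Delta x)}\le e^{O(1)}$; the role you assign to $M(\Delta x)=\frac12-C\Delta x$ versus a fixed threshold is exactly the paper's point. Two local corrections are needed. First, your one-step inequality is stated in the wrong direction. From $\omega_+(j)>M$ and \eqref{def_alpha_pm} one gets, by pure algebra (no Taylor expansion of \eqref{sviluppo_tau} is needed, and none would be legitimate for rough grid data),
\begin{equation*}
\bigl(\beta_0^++\sigma_{\Delta x}\bigr)^2<\tfrac{1-M}{M}\bigl(\beta_1^++\sigma_{\Delta x}\bigr)^2,
\qquad\text{i.e.}\qquad
|D^2f_j|^2<\sqrt{\tfrac{1-M}{M}}\,|D^2f_{j+1}|^2+\Bigl(\sqrt{\tfrac{1-M}{M}}-1\Bigr)\sigma ,
\end{equation*}
so $\phi_j=1$ controls $|D^2f_j|$ by its neighbours, not the neighbours by $|D^2f_j|$: indeed $\omega_+(j)\approx 1>M$ whenever $|D^2f_j|\ll|D^2f_{j+1}|$, so the inequality $|D^2f_{j+1}|\le(1+\kappa\Delta x)\,|D^2f_j|$ you take as the key step is false in general. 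The telescoping survives because every node of the walk lies in $R_i$ and therefore carries its own condition: for $j'<j_i$ you apply $\omega_+(j')>M$ at the node being estimated and chain toward the anchor, and symmetrically $\omega_-(j')>M$ for $j'>j_i$. Second, the additive $\sigma$-term above must be carried through the whole iteration, not only in the degenerate case $D^2f_j=0$: it accumulates as a geometric series and contributes a bounded amount to the final constant (the paper obtains $(\tfrac{1-M}{M})^{L/2}(\sigma+\delta^2)-\sigma$), so it is harmless but cannot be absorbed into a purely multiplicative bound. With these two repairs your argument coincides with the paper's proof.
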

\begin{proof}
Since $\{f_j\}$ has compact support we have $|I_s|<\infty$ and it will be  enough to prove the assertion just for one $i\in I_s$. More simply,  in the regular case we have $\mathcal R=\mathbb Z$ and we want  to show  that the statement is true if the discrete derivative is bounded at some point, so if there exist  an index $\widehat j\in \mathcal R$ and a positive constant $\delta$ (independent of  $\Delta x$) such that $|D^2f_{\widehat j}(\Delta x)|\le \delta $  (note that for the nodes outside the support of $f_j$ we can even set $\delta=0$). In the following, we simplify the notation dropping  the dependence of $f_j$ on $\Delta x$.	
By definition of $\phi$ and $\omega$, if $\phi_j=1$ then both $\omega_\pm > M$. Moreover,  (\ref{beta_k_m})-(\ref{beta_k_p}) imply that the coefficients $\beta^{\pm}$ are always  nonnegative as well as $\omega_\pm$. \\
Let us consider the case $j<\widehat j$. 
Then, by definition,
	$$
	\omega_+=\frac{(\beta_1^++\sigma_{\Delta x})^2}{(\beta_1^++\sigma_{\Delta x})^2+(\beta_0^++\sigma_{\Delta x})^2}>M,
	$$
	which leads by simple computations to
	$$
	\beta^+_0<\sqrt{\frac{1-M}{M}}\beta_1^++\left(\sqrt{\frac{1-M}{M}}-1\right)\sigma_{\Delta x},
	$$
	then, dividing by $\Delta x^2$ and recalling that $\sigma_{\Delta x}=\sigma \Delta x^2$ , we get
	\begin{equation} \label{eq:iter}
	|D^2 f_j|^2<\sqrt{\frac{1-M}{M}}|D^2 f_{j+1}|^2+\left(\sqrt{\frac{1-M}{M}}-1\right)\sigma. 
	\end{equation}
	Now let us iterate \eqref{eq:iter} on $j$ till $\widehat j$ and define $L_j\equiv \widehat j-j$,   we have
	\begin{align*}
		|D^2 f_j|^2<\dots&< \left(\frac{1-M}{M}\right)^{\frac{L_j}{2}}|D^2 f_{\widehat j}|^2+\left(\sqrt{\frac{1-M}{M}}-1\right) \sigma \sum_{k=0}^{L_j-1}\left(\frac{1-M}{M}\right)^{\frac{k}{2}}\\
		&< \left(\frac{1-M}{M}\right)^{\frac{L_j}{2}} \delta^2 +\left(\sqrt{\frac{1-M}{M}}-1\right)\sigma \sum_{k=0}^{L_j-1}\left(\frac{1-M}{M}\right)^{\frac{k}{2}}\\
		&= \left(\frac{1-M}{M}\right)^{\frac{L_j}{2}} \delta^2 +\left(\sqrt{\frac{1-M}{M}}-1\right)\sigma \frac{1-\left(\frac{1-M}{M}\right)^{\frac{L_j}{2}}}{1-\sqrt{\frac{1-M}{M}}}\\
		&= \left(\frac{1-M}{M}\right)^{\frac{L_j}{2}}(\sigma+ \delta^2) -\sigma.
	\end{align*}
For  $j>\widehat j$, we can use the relation $\omega_->M$ and iterate back to $\widehat j$ redefining  $L_j \equiv  j-\widehat j$, the calculations are similar also for this case.\\
Since $L_j \Delta x$ is bounded by $b$,  we have $L_j\leq \frac{b}{\Delta x}$, $\forall j\in \mathcal R$. Recalling that  $M=\frac{1}{2}-C\Delta x$, we can use the previous bound on $|D^2 f_j|^2$ to proceed
	\begin{align}
	\label{eq:bound}
		|D^2f_j|^2&\leq \left(\frac{1}{M} -1 \right)^{\frac{b}{2\Delta x}} (\sigma+\delta^2)-\sigma  = \left(\frac{2}{1-2C\Delta x}-1\right)^{\frac{b}{2\Delta x}}(\sigma+\delta^2)-\sigma 
	\end{align}
We get the final bound  passing to the limit for $\Delta x$ going to 0 in \eqref{eq:bound},  in conclusion we get 
\begin{equation}
|D^2f_j|^2\leq e^{3Cb} (\sigma+\delta^2) -\sigma
\end{equation}
and  the statement follows simply taking $B:=\sqrt{e^{3Cb} (\sigma+\delta^2)-\sigma}$.
\end{proof}

Unfortunately, we noticed through numerical tests that the $O(\Delta x)$ term in regular regions may produce heavy oscillations around the optimal value $\overline\omega= 1/2$. 
To increase the accuracy, we can use higher order smoothness indicator  ($r>2$), but we would need a bigger reconstruction stencil. Otherwise, if we want to keep the compactness of the stencil, we can use the \emph{mappings} defined in \cite{HAP05}, 
\begin{equation}
	g(\omega)=\frac{\omega(\overline \omega+\overline \omega^2-3 \overline \omega \omega +\omega^2)}{\overline \omega^2 +\omega(1-2\overline \omega)},\qquad \overline \omega \in (0,1),
	\label{map}
\end{equation}
which have the properties that $g(0)=0$, $g(1)=1$, $g(\overline \omega)=\overline \omega$, $g^\prime(\overline \omega)=0$ and $g^{\prime\prime}(\overline \omega)=0$. Then, we define
\begin{align*}
	\omega_\pm^*&=g(\omega_\pm)\\
	&=g(\overline \omega)+g^\prime(\overline \omega)(\omega_\pm-\overline \omega)+\frac{g^{\prime\prime}(\overline \omega)}{2}(\omega_\pm-\overline \omega)^2+\frac{g^{\prime\prime\prime}(\overline \omega)}{6}(\omega_\pm-\overline \omega)^3+O(\Delta x^4)\\
	&=\overline \omega+\frac{(\omega_\pm-\overline \omega)^3}{\overline \omega-\overline \omega^3}+O(\Delta x^4)\\
	&=\overline \omega +O(\Delta x^3). 
\end{align*}
Notice that with respect to the definition in \cite{HAP05} we avoided the second weighting which seems unnecessary in our case. More explicitly, the mapping we use is
	\begin{equation}
		\label{map2}
		g(\omega)=4\omega\left(\frac{3}{4}-\frac{3}{2}\omega+\omega^2\right).
	\end{equation}
It is important to remind that, at the moment, Lemma \ref{lemma_bound_B} is valid only for indicators $\omega$ using the standard construction for $r=2$, without the possibility to introduce any modification, or higher order indicators. Moreover, as it will be briefly discussed in Remark \ref{problema_lemma_bound_B}, it introduces some limitations in the applicability even when using the standard indicators, testifying the necessity of some improvements in the argument used. 
Notice that the previous lemma strongly relies on the fact that $\omega$ is computed using (\ref{beta_k_m})-(\ref{beta_k_p}) without introducing the mappings (\ref{map2}). In fact, if we were to use (\ref{map2}), we could develop the algebra until the inequality

	$$
|D^2f_j|^2\leq \left(\frac{1}{g^{-1}(M)}-1\right)^{\frac{b}{2\Delta x}}	(\sigma+\delta^2)-\sigma,
	$$
but, by definition, $g^{-1}$ cannot be expanded in Taylor series around the point $\frac{1}{2}$, whence we could not use the notable limit to conclude. 

Therefore, we are forced to add a ``technical'' assumption in order to justify the proof of Prop. \ref{prop_eps_n_un}. More precisely, when using the alternative constructions for $\omega$ (using the mapping (\ref{map2})), we define the \emph{region of regularity} $\mathcal R$ detected by the function $\widetilde\phi$ as the set
\begin{equation}\label{def_tilde_phi}
	\mathcal R=\left\{j\in \mathbb Z: \widetilde \phi(\omega_j)=1 \right\},
	\quad \textrm{ with }\quad 
	\widetilde \phi_j=\left\{
	\begin{array}{ll}
		1\quad&\textrm{ if }\phi(\omega_j)=1 \textrm{ and } |Du^2_j|<B,\\
		0&\textrm{ otherwise,}
	\end{array}
	\right.
\end{equation}
for some constant $B\gg0$. Notice that with this definition, which, we recall, is needed only for theoretical reasons, it is not necessary to require $M(\Delta x)\to 0$, then we can simply choose a constant $M>0$ small enough (e.g. $M=0.1$), as we will do in the numerical tests of Sect.  \ref{sec:tests}.

\section{Convergence result}\label{sec:convergence}
We are now able to present our main result, but before doing so let us state a useful proposition about the numerical solution and the parameter $\varepsilon^n$.
\begin{proposition}\label{prop_eps_n_un}
	Let $u^n$ be the solution obtained by the scheme (\ref{AFS})-(\ref{eps}) and assume that $v_0$ and $H$ are Lipschitz continuous functions. Assume also that $\mathcal R^n$ is defined by (\ref{def_region_Rn}) or (\ref{def_tilde_phi}), with $\phi$ given by (\ref{phi_disc}), and that $\lambda=\Delta t/\Delta x$ is a constant such that \eqref{cond_CFL} is satisfied. Then, $\varepsilon^n$ is well defined and $u^n$ satisfies, for any $i$ and $j$, the discrete Lipschitz estimate
	\begin{equation}
		\label{lip_un}
		\frac{|u^n_i-u^n_j|}{\Delta x}\leq L
	\end{equation}
	for some constant $L>0$, for $0\leq n\leq T/\Delta t$. Moreover, there exists a constant $C>0$ such that 
	\begin{equation}
		\label{bound_eps_n}
		\varepsilon^n\leq C \Delta x.
	\end{equation}
\end{proposition}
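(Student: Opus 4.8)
The plan is to run an induction on $n$ that carries along all three conclusions simultaneously: that $\varepsilon^n$ from \eqref{eps} is well defined, that $u^n$ obeys the discrete Lipschitz bound (equivalently $|u^n_{j+1}-u^n_j|\le L\,\Delta x$ for every $j$), and that $\varepsilon^n\le C\,\Delta x$, with $L,C$ independent of $(\Delta t,\Delta x)$ and of $n$ in the range $n\le T/\Delta t$. There is no circularity, since $\varepsilon^n$ is built from $u^n$ alone through \eqref{eps}, and $u^{n+1}$ is then obtained from $u^n$ and $\varepsilon^n$ through \eqref{AFS}. For $n=0$ we have $u^0_j=v_0(x_j)$, hence $|u^0_{j+1}-u^0_j|\le \mathrm{Lip}(v_0)\,\Delta x$, which fixes the initial Lipschitz constant $L_0$.

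Assume now $|u^n_{j+1}-u^n_j|\le L_n\,\Delta x$ for all $j$. Then the first divided differences $D^\pm u^n_j$ and $Du^n_j=\tfrac{1}{2}(D^-u^n_j+D^+u^n_j)$ entering \eqref{eps} are bounded by $L_n$, so, $H$ and $h^M$ being Lipschitz, every term inside the modulus in \eqref{eps} is bounded uniformly in $j$ and $\varepsilon^n$ is well defined (finite and nonnegative; if one insists on $\varepsilon^n>0$, a floor $\varepsilon^n\ge c\,\Delta x^2$ may be imposed harmlessly). The core estimate $\varepsilon^n\le C\,\Delta x$ rests on a bound for the \emph{second} divided difference on $\mathcal R^n$: either $|D^2u^n_j|<B$ for $j\in\mathcal R^n$ holds by definition when $\mathcal R^n$ is the set \eqref{def_tilde_phi}, or it follows from Lemma \ref{lemma_bound_B} applied to $\{u^n_j(\Delta x)\}_j$ when $\mathcal R^n$ is \eqref{def_region_Rn} with $\omega$ built from \eqref{beta_k_m}--\eqref{beta_k_p} (see the closing remark on the hypotheses), with $B$ independent of $\Delta x$. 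Granting $|D^2u^n_j|\le B$ on $\mathcal R^n$, one has $|D^+u^n_j-D^-u^n_j|=\Delta x\,|D^2u^n_j|\le B\,\Delta x$ there, and Lipschitz continuity of $H$ and $h^M$ bounds the three brackets of \eqref{eps} respectively by $\mathrm{Lip}(H)^2\lambda B\,\Delta x$ and by $\mathrm{Lip}(h^M)B\,\Delta x$ (twice), whence $\varepsilon^n\le K\big(\mathrm{Lip}(H)^2\lambda+2\,\mathrm{Lip}(h^M)\big)B\,\Delta x=:C\,\Delta x$.

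It remains to propagate the Lipschitz bound. Write $u^{n+1}_j=S^M(u^n)_j+c^n_j$ with $c^n_j:=\phi^n_j\varepsilon^n\Delta t\,F(\cdot)$; since $F$ is bounded by (F1)--(F2), $|c^n_j|\le\varepsilon^n\Delta t\,\|F\|_\infty\le C\|F\|_\infty\,\Delta x\,\Delta t$ by the previous step. Because $S^M$ is in differenced form it commutes with the unit shift, $S^M(u^n(\cdot+\Delta x))_j=S^M(u^n)_{j+1}$, so the $L^\infty$-nonexpansivity \eqref{prop_nonexp} gives $\sup_j|S^M(u^n)_{j+1}-S^M(u^n)_j|\le\sup_j|u^n_{j+1}-u^n_j|\le L_n\,\Delta x$. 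Hence $|u^{n+1}_{j+1}-u^{n+1}_j|\le (L_n+2C\|F\|_\infty\Delta t)\,\Delta x$, i.e. $L_{n+1}\le L_n+2C\|F\|_\infty\Delta t$; iterating over $n$ yields $L_n\le L_0+2C\|F\|_\infty\,t_n\le L$, a constant independent of $(\Delta t,\Delta x)$. This closes the induction and proves both \eqref{lip_un} and \eqref{bound_eps_n}.

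The step I expect to be the genuine obstacle is the second-difference bound $|D^2u^n_j|\le B$ on $\mathcal R^n$. Lemma \ref{lemma_bound_B} is tailored to it, but its hypotheses are not automatic for the numerical solution: one needs the deviation of $u^n$ from its far-field affine profile to be compactly supported (true, with physical support radius $\le b_0+sT$ uniformly in $\Delta x$ by finite-speed propagation, so that $D^2u^n$ vanishes on the affine tails), a reference node with finite $D^2$ in \emph{every} connected component of $\mathcal R^n$ (problematic for regular strips trapped between singularities), and the plain $r=2$ construction of $\omega$ without the mapping \eqref{map2}. This is precisely why the region \eqref{def_tilde_phi} is introduced, for which the bound holds by fiat and the argument becomes unconditional; everything else — well-definedness, the nonexpansivity estimate, and the final iteration — is routine once this bound is in hand.
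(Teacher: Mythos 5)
Your proof is correct and follows essentially the same route as the paper's: an induction on $n$ that propagates the discrete Lipschitz bound through the $L^\infty$-nonexpansivity of $S^M$ plus the $O(\varepsilon^n\Delta t)$ filter correction, and bounds $\varepsilon^n$ by combining the Lipschitz continuity of $H$ and $h^M$ with the second-difference bound $B$ on $\mathcal R^n$ supplied by Lemma \ref{lemma_bound_B} or by the definition \eqref{def_tilde_phi}. Your closing caveat about the hypotheses of Lemma \ref{lemma_bound_B} is well taken and corresponds to the paper's own Remark \ref{problema_lemma_bound_B} and its motivation for introducing \eqref{def_tilde_phi}.
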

\begin{proof}
	Before proceeding with the proof let us notice that, if $u^n$ satisfies (\ref{lip_un}) for a constant $L_n>0$, calling for brevity
	$$
	D^*u_j:=D \, u^n_j-\lambda\left[H(D^+ u^n_j)-H(D^-u^n_j)\right],
	$$
	we have that
	\begin{align*}
		\varepsilon^n&=\max_{x_j\in\mathcal R^n}K\left|H\left(D \, u^n_j\right)-H\left(D^*u_j\right)+\left[h^M(D \, u^n_j,D^+ u^n_j)-h^M(D \, u^n_j,D^-u^n_j)\right] \nonumber\right.\\
		&\quad\quad\left.-\left[h^M(D^+ u^n_j,D \, u^n_j)-h^M(D^-u^n_j,D \, u^n_j)\right]\right|\nonumber \\
		&=\max_{x_j\in\mathcal R^n}K\left|\left[\Delta t\left(\frac{H\left(D \, u^n_j\right)-H\left(D^*u_j\right)}{D \, u^n_j-D^*u_j}\right)\left(\frac{H(D^+ u^n_j)-H(D^-u^n_j)}{D^+ u^n_j-D^-u^n_j}\right)  \right.\nonumber \right.\\
		&\quad\quad+\Delta x \left(\frac{h^M(D \,u^n_j,D^+ u^n_j)-h^M(D \,u^n_j,D^-u^n_j)}{D^+ u^n_j-D^-u^n_j}\right)\nonumber\\
		&\left.\quad\quad\left.-\Delta x\left(\frac{h^M(D^+ u^n_j,D \,u^n_j)-h^M(D^-u^n_j,D \,u^n_j)}{D^+ u^n_j-D^-u^n_j}\right)\right]\left(\frac{D^+ u^n_j-D^-u^n_j}{\Delta x}\right)\right|,\nonumber \\
	\end{align*}
whence we can conclude
	\begin{align}\label{stima_eps}
	\varepsilon^n
	&\leq K \left|(\Delta t L_H L_{H2} +2\Delta x L_{h^M})B \right| \nonumber \\
		&=KB\left(\lambda L_H L_{H2} +2L_{h^M}\right)\Delta x,
	\end{align}
where $L_{h^M}$ is the Lipschitz constant of $h^M$, whereas $L_H$ and $L_{H2}$ are the local Lipschitz constant of $H$ on $[-L_n,L_n]$ and $[-2L_n-\Delta t L_H B,2L_n+\Delta t L_HB]$, respectively. 
Notice that, if the smoothness indicators are computed using the definitions \eqref{beta_k_m}-\eqref{beta_k_p}, we have 
	$$
	\frac{\sqrt{\beta^+_0(u^n)_j}}{\Delta x}=\frac{D^+ u^n_j-D^-u^n_j}{\Delta x}=D^2u_j^n.
	$$
	Then, in such case by Lemma \ref{lemma_bound_B}, $x_j \in \mathcal R^n \Rightarrow D^2 u_j^n<B$, for some constant $B>0$ independent on $n$. Otherwise, we can obtain the same estimate by the definition \eqref{def_tilde_phi} of $\mathcal R^n$. 

Notice also that if the function $H$ is globally Lipschitz continuous we have the same estimate with $L_{H2}=L_H$, where now $L_H$ is the global Lipschitz constant of $H$. 
Consequently, the last statement would follow with $C=KB(\lambda L^2_H +2L_{h^M})$.
	
Let us now prove the main statement proceeding, as usual, by induction on $n\geq 0$ and noticing that it is sufficient to prove (\ref{lip_un}) for $i$ and $j$ such that $i=j\pm1$. 
	
For $n=0$, as we take $u^0_j=v_0(x_j)$ for $j\in \mathbb Z$, we have that (\ref{lip_un}) is satisfied by the Lipschitz continuity assumption on $v_0$, with constant $L_0$. 
	
Now, assuming that (\ref{lip_un}) is satisfied for $n-1>0$ so that $\varepsilon^k$ for $k=0,\dots,n-1$ are bounded by (\ref{stima_eps}), we can compute
	\begin{align*}
		\frac{|u_i^{n}-u_j^{n}|}{\Delta x}&=\frac{1}{\Delta x}\left|S^M(u^{n-1})_i+\phi_i \varepsilon^{n-1} \Delta t F(\cdot)_i - S^M(u^{n-1})_j-\phi_j \varepsilon^{n-1} \Delta t F(\cdot)_j\right| \\
		&\leq \frac{1}{\Delta x}\left(|S^M(u^{n-1})_i-S^M(u^{n-1})_j|+\varepsilon^{n-1}\Delta t|\phi_i F(\cdot)_i-\phi_j F(\cdot)_j|\right) \\
		&\leq \frac{|u^{n-1}_i-u^{n-1}_j|}{\Delta x} + \frac{2\Delta t}{\Delta x}\varepsilon^{n-1}
	\end{align*}
	then, iterating back and using the same arguments, 
	\begin{align*}
		\frac{|u_i^{n}-u_j^{n}|}{\Delta x}&\leq \frac{|u^{n-1}_i-u^{n-1}_j|}{\Delta x} + 2\Delta t C\leq \dots \\
		&\leq\frac{|u_i^{1}-u_j^{1}|}{\Delta x}  +2(n-1)\Delta t C 
		\leq \frac{|u_i^{0}-u_j^{0}|}{\Delta x} +2n\Delta t  C \\
		&\leq L_0 + 2\frac{T}{\Delta t}\Delta t C = L,
	\end{align*}
	where $C$ is well defined by (\ref{stima_eps}). Notice that we have used the nonexpansivity in $L^\infty$ of $S^M$ and the fact that $|F|\leq 1$, $|\phi|\leq 1$. 
\end{proof}
Therefore, it is clear that by construction our scheme is \emph{$\varepsilon$-monotone}, in the sense of the following
\begin{definition}[$\varepsilon$-monotonicity]
	A numerical scheme $S$ is \emph{$\varepsilon$-monotone} if for any functions $u,v$,
	$$
	\mbox{$u\leq v$ $\Rightarrow$  $S(u)\leq S(v)+C\varepsilon \Delta t$},
	$$
	where $C$ is constant and $\varepsilon \rightarrow 0 $ as $\Delta =(\Delta t, \Delta x) \rightarrow 0$.
\end{definition}
Thanks to that property, by applying the Barles-Souganidis result \cite{BS91}, the convergence follows directly. 
We conclude this section with the following theorem, which gives us the order of convergence for the Adaptive Filtered Schemes. 
\begin{theorem}\label{main_result}
Let the assumptions on $S^M$ and $S^A$ be satisfied.  Assume that $v_0$ and $H$ are Lipschitz continuous functions, $u_j^{n+1}$ is computed by (\ref{AFS})-(\ref{eps}), with $K>1/2$ and $\lambda=\frac{\Delta t}{\Delta x}$, a constant such that (\ref{cond_CFL}) is satisfied. 
Assume also that $\mathcal R^n$ is defined by (\ref{def_region_Rn}) or (\ref{def_tilde_phi}), with $\phi$ given by (\ref{phi_disc}).  
Let us denote by  $v_j^n := v(t^n,x_j)$ the values of the viscosity solution on the nodes of the grid. Then,
	\begin{enumerate}[i)]
		\item the AF scheme \eqref{AFS} satisfies Crandall-Lions estimate \cite{CL84}
		\begin{equation*}
			||u^n-v^n||_\infty \leq C_1\sqrt{\Delta x},\quad \forall \ n=0,\dots,N,
		\end{equation*}
		for some constant $C_1>0$ independent of  $\Delta x$. 
		\item (First order convergence for regular solutions) Moreover, if $v\in C^2([0,T]\times \mathbb R)$, then
		\begin{equation*}
			||u^n-v^n||_\infty \leq C_2\Delta x,\quad \forall \ n=0,\dots,N,
		\end{equation*}
		for some constant $C_2>0$ independent of $\Delta x$. 
		\item (High-order local consistency) Let $k\geq 2$ be the order of the scheme $S^A$. If  $v \in C^{l+1}$ in some neighborhood of a point $(t,x) \in [0,T]\times \mathbb R$, then for $1\leq l\leq k$,
		\begin{equation*}
			\mathcal E_{AF}(v^n)_j=\mathcal E_{A}(v^n)_j=O(\Delta x^l)+O(\Delta t^l)
		\end{equation*}
		for $t^n-t$, $x_j-x$, $\Delta t$, $\Delta x$ sufficiently small.
	\end{enumerate}
\end{theorem}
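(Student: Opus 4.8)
The plan is to derive all three statements from the structural properties already established, chiefly the discrete Lipschitz estimate \eqref{lip_un}, the bound $\varepsilon^n \le C\Delta x$ from Proposition \ref{prop_eps_n_un}, the nonexpansivity \eqref{prop_nonexp} of $S^M$, and the consistency errors $\mathcal E_M$, $\mathcal E_A$. The key observation throughout is that the AF scheme differs from the monotone scheme only by the correction term $\phi_j^n \varepsilon^n \Delta t\, F(\cdot)$, whose absolute value is bounded by $\varepsilon^n\Delta t \le C\Delta t\,\Delta x$, since $|F|\le 1$ and $|\phi_j^n|\le 1$. Hence the AF scheme is $\varepsilon$-monotone with a perturbation of size $O(\Delta t\,\Delta x)$ per step, which is one order better in $\Delta x$ than what a crude monotone consistency error would give.

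For part i), I would follow the classical Crandall–Lions doubling-of-variables argument adapted to $\varepsilon$-monotone schemes, exactly as in \cite{BFS16}. One compares $u^n_j$ with a shifted/regularized version of the viscosity solution, uses the $L^\infty$-nonexpansivity of $S^M$ together with the monotone consistency bound \eqref{consistenza_M} applied to a mollified solution, and absorbs the extra $\phi^n\varepsilon^n\Delta t F$ term into the error budget. Summing the per-step errors over $n \le T/\Delta t$ steps and optimizing the mollification parameter against $\Delta x$ produces the $\sqrt{\Delta x}$ rate; the additional filtered term contributes only $\sum_n C\Delta t\,\Delta x \le CT\Delta x$, which is lower order and does not spoil the estimate. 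For part ii), when $v\in C^2$ the monotone consistency error \eqref{consistenza_M} is genuinely $O(\Delta t\,\|v_{tt}\|_\infty + \Delta x\,\|v_{xx}\|_\infty) = O(\Delta x)$ under the fixed CFL ratio $\lambda = \Delta t/\Delta x$; combining this with the nonexpansivity of $S^M$ and the $O(\Delta t\,\Delta x)$ filtered perturbation, a standard discrete Gronwall / telescoping estimate over $n\le T/\Delta t$ steps yields $\|u^n - v^n\|_\infty \le C_2\Delta x$.

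Part iii) is the local truncation-error statement and is the most delicate point. Here one must show that, at a node $x_j$ near a point $(t,x)$ where $v\in C^{l+1}$, the AF scheme actually selects the high-order branch, i.e. that the argument of $F$ lies in $[-1,1]$ and $\phi_j^n = 1$, so that $S^{AF}(v^n)_j = S^A(v^n)_j$ up to the $F(x)\approx x$ discrepancy controlled by (F1), and then invoke (A3). The smoothness-indicator analysis of Section \ref{sec:indicators} gives $\omega_j = \tfrac12 + O(\Delta x)$ in the regular region, so $\phi_j^n = 1$ for $\Delta x$ small (with the choice $M(\Delta x) = \tfrac12 - C\Delta x$, or with the technical region \eqref{def_tilde_phi}); meanwhile the Taylor-expansion computation leading to \eqref{epn_ineq} shows the numerator $|h^A(\cdot) - h^M(\cdot)|$ is $O(\Delta x)$ while $\varepsilon^n$ is, by \eqref{eps}, a consistent second-order approximation of a positive multiple of precisely that quantity, so the ratio stays in $[-1,1]$ as $(\Delta t,\Delta x)\to 0$. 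The main obstacle — and the step I would spend the most care on — is making this last argument fully rigorous: one must rule out the degenerate possibility that $\varepsilon^n$, defined as a \emph{maximum over the whole region $\mathcal R^n$}, is driven to zero (so that the ratio blows up) by the solution being, say, locally linear near $x_j$ while genuinely curved elsewhere. This is why Lemma \ref{lemma_bound_B} and the technical definition \eqref{def_tilde_phi} are needed: they guarantee $|D^2 u^n_j|$ is controlled on all of $\mathcal R^n$, and one uses the strict inequality $K > 1/2$ to ensure $\varepsilon^n$ dominates the local defect with room to spare. Once the high-order branch is shown to be active, (A3) gives $\mathcal E_{AF}(v^n)_j = \mathcal E_A(v^n)_j = O(\Delta x^l) + O(\Delta t^l)$ directly.
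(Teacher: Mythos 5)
Your proposal is correct in substance and, for parts ii) and iii), follows essentially the same route as the paper. The one genuine divergence is in part i): you propose to rerun the Crandall--Lions doubling/mollification argument directly on the adaptive filtered scheme, absorbing the term $\phi^n_j\varepsilon^n\Delta t\,F(\cdot)$ into the per-step error budget. The paper instead introduces the auxiliary iterate $w^{n+1}_j=S^M(w^n)_j$, $w^0_j=v_0(x_j)$, uses the nonexpansivity \eqref{prop_nonexp} together with $|F|\leq 1$ and $\varepsilon^n\leq C\Delta x$ to get $\|u^n-w^n\|_\infty\leq \sum_k\varepsilon^k\Delta t\leq CT\Delta x$, and then invokes the known Crandall--Lions estimate for the pure monotone scheme as a black box via the triangle inequality. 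Both arguments rest on the observation you make explicitly --- the filtered perturbation contributes only $O(\Delta x)$ in total, which is dominated by $\sqrt{\Delta x}$ --- but the paper's version avoids reproving anything about viscosity solutions and is cleaner; yours is more self-contained but heavier.

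On part iii) your mechanism is the right one (show the argument of $F$ stays in $[-1,1]$ and $\phi^n_j=1$, so $S^{AF}=S^A$ locally, then apply (A3)); the paper does exactly this, obtaining $|S^A(v^n)_j-S^M(v^n)_j|/(\varepsilon^n\Delta t)\leq \tfrac{1}{2K}+O(\Delta x)+O(\Delta t)<1$ from the computation of Sect.~\ref{sect_tune_eps} and $K>1/2$. Two corrections to your discussion, though. First, the degenerate scenario you worry about --- $v$ locally linear near $x_j$ but curved elsewhere in $\mathcal R^n$ --- is actually harmless: since $\varepsilon^n$ is a maximum over $\mathcal R^n$ it can only exceed the local quantity at $x_j$, while the numerator at $x_j$ is then small, so the ratio shrinks. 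The genuinely delicate case is the opposite one, in which $v$ is nearly linear throughout $\mathcal R^n$, so $\varepsilon^n$ itself degenerates below $O(\Delta x)$ and the $O(\Delta x^2)+O(\Delta t^2)$ remainders in the numerator need not be small relative to $\varepsilon^n\Delta t$; the paper's proof is silent on this point as well. Second, Lemma~\ref{lemma_bound_B} and \eqref{def_tilde_phi} enter only through the upper bound $\varepsilon^n\leq C\Delta x$ of Prop.~\ref{prop_eps_n_un} (used in i), ii) and in the remainder estimate of iii)), not through any lower bound on $\varepsilon^n$; and the requirement $\phi^n_j=1$ at regular points, which you rightly flag as necessary, is acknowledged by the paper only a posteriori in Remark~\ref{problema_lemma_bound_B}.
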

\begin{proof}
	\emph{\textbf{i)}} Let us proceed as has been done in \cite{BFS16} defining $w^{n+1}_j=S^M(w^n)_j$, the solution computed with the monotone scheme alone with $w^0_j=v_0(x_j)$. Then by definition,
	\begin{equation}
		u_j^{n+1}-w_j^{n+1}=S^M(u^n)_j-S^M(w^n)_j+\phi^n_j\varepsilon^n \Delta t F\left(\frac{S^A(u^n)_j-S^M(u^n)_j}{\varepsilon^n\Delta t}\right),
	\end{equation}
	whence, exploiting the nonexpansivity in $L^\infty$ of $S^M$, the definition of $\varepsilon^n$ and that $|F|\leq 1$,
	\begin{equation}
		\max_j|u_j^{n+1}-w_j^{n+1}|\leq \max_j|u^n_j-w^n_j|+\varepsilon^n \Delta t.
	\end{equation}
Then, proceeding recursively on $n\leq N$ and recalling that by Prop. \ref{prop_eps_n_un} there exists a constant $C>0$ such that $\varepsilon^n\leq C\Delta x:=\varepsilon$ for each $n$,
	\begin{equation}
		\label{1}
		\max_j|u^n_j-w^n_j|\leq \sum_{k=0}^{n-1} \varepsilon^k \Delta t\leq n\varepsilon \Delta t \leq T\varepsilon.
	\end{equation}
	At this point, by the triangular inequality
	\begin{equation}
		\max_j|u^{n+1}_j-v^{n+1}_j|\leq \max_j|u_j^{n+1}-w_j^{n+1}|+\max_j|w^{n+1}_j-v^{n+1}_j|,
	\end{equation}
	whence we have that
	\begin{equation}
		\max_j|u^{n+1}_j-v^{n+1}_j| \leq \max_j|w^n_j-v^n_j|+\varepsilon T \leq (C_{CL}+CT)\sqrt{\Delta x},
	\end{equation}
	with $C_{CL}>0$ given by the Crandall-Lions estimate for $S^M$.
	
	\noindent
	\emph{\textbf{ii)}} Let us recall that by (\ref{consistenza_M}), in the case of $v \in C^2$ the consistency error for the monotone scheme is such that $\mathcal E_{M}(v^n)_j\leq C_M(\Delta t+\Delta x)$. Then we can compute
	\begin{align*}
		|u_j^{n+1}-v_j^{n+1}|&=|S^M(u^n)_j+\phi_j\varepsilon^n \Delta t F(\cdot)-v^{n+1}_j|\\
		&\leq|S^M(u^n)_j-S^M(v^n)_j|+|S^M(v^n)_j-v^{n+1}_j|+\varepsilon^n\Delta t\\
		&\leq||u^n-v^n||_\infty +\Delta t\left(\mathcal E_{M}(v^n)+\varepsilon^n\right),
	\end{align*}
	whence, by recursion on $n\leq N$ and recalling what we have done in the previous point,
	\begin{equation}
		||u^n-v^n||_\infty\leq||u^0-v^0||_\infty+T\left(\max_{k=0,\dots,n-1}||\mathcal E_{M}(v^k)||_\infty+\varepsilon\right).
	\end{equation}
	To finish this proof what is left is to use the estimate on $\mathcal E_{M}$ and Prop. \ref{prop_eps_n_un} .  \\ 
	
	\noindent
	\emph{\textbf{iii)}} In order to show that $S^{AF}(v^n)_j=S^A(v^n)_j$ for $\Delta t$ e $\Delta x$ small enough it is sufficient to prove that
	\begin{equation}
		\frac{|S^A(v^n)_j-S^M(v^n)_j|}{\varepsilon^n \Delta t} \leq 1,\qquad \textrm{ for }(\Delta t,\Delta x)\to 0,
	\end{equation}
which follows directly from the computation we have done in Sect.  \ref{sect_tune_eps} for the tuning of the parameter $\ep^n$. In fact, if we plug (\ref{eps}) inside the previous inequality, we can deduce that
	\begin{equation*}
		\frac{|S^A(v^n)_j-S^M(v^n)_j|}{\varepsilon^n \Delta t}\leq \frac{1}{2K} +O(\Delta x)+O(\Delta t),
	\end{equation*}
	which, using that $K>1/2$ by assumption, leads to the thesis as $(\Delta t,\Delta x)\to 0$. Notice that we have used the property $\varepsilon^n=O(\Delta x)$ and exploited the CFL condition. 
\end{proof}

\begin{remark}\label{problema_lemma_bound_B}
	Notice that the assumption $M(\Delta x)=\frac{1}{2}-C\Delta x$, for some constant $C>0$ such that $M(\Delta x)>0$, needed to apply Lemma \ref{lemma_bound_B}, may give some problems in the proof of third assertion of the previous theorem. In fact, applying the standard definition (\ref{relazione_omega}) to the viscosity solution $v$ at a point $x_j$ and recalling the computations that led to (\ref{sviluppo_tau}), we get that
	$$
	\omega_j^\pm=\frac{1}{2}\mp \Delta x\frac{4 v_j^{\prime\prime}v_j^{\prime\prime\prime}}{(v_j^{\prime\prime})^2+\sigma}+O(\Delta x^2).
	$$
	Consequently, in order to be sure that if $v\in C^3$, then $j\in\mathcal R$, we have to choose the constant $C$ such that
	$$
	C\geq\left|\frac{4 v_j^{\prime\prime}v_j^{\prime\prime\prime}}{(v_j^{\prime\prime})^2+\sigma}\right|,
	$$
	or require additional smoothness assumptions on $v$, for example $v^{\prime \prime \prime}_j\ll v^{\prime\prime}_j$. This in fact poses a strong limitation on the applicability of Lemma \ref{lemma_bound_B}, at least in the present formulation. 
\end{remark}

\section{Numerical Tests}\label{sec:tests}
In this section we will present some one-dimensional examples designed to show the properties of our scheme, stated by Theorem \ref{main_result}. Our goal is also to compare the performances of our Adaptive Filtered Schemes $S^{AF}$ with those of the Filtered Scheme $S^F$ introduced in \cite{BFS16} and of the WENO scheme of second/third order of \cite{JP00}. 
Regarding the basic filtered scheme, we decided to avoid the introduction of the limiter used in \cite{BFS16} in all the numerical tests here presented for a more direct comparison. 
For all our numerical examples, we will use the function $\phi$ defined in \eqref{phi_disc}, with $\beta_k$ given by \eqref{beta_k_m}-\eqref{beta_k_p}, the mapping \eqref{map2}, and $M=0.1$, the parameter $\varepsilon^n$ defined in \eqref{eps}, and we will compute the errors and orders in  $L^\infty$ and $L^1$ norm. 
For each test,  we will specify the monotone and high-order schemes composing the filtered scheme.   
As already stated in Sect. \ref{subsec:filters}, in all our numerical simulations we will use the discontinuous filter function defined in \eqref{def_F1}. 
This choice is justified by comparison reasons, since in \cite{BFS16} this is the only filter function used and we suppose the authors in \cite{BFS16} used it since it gives the best performances for their $S^F$ scheme. Since our scheme is not sensitive to the choice of the filter function, we use the same as in \cite{BFS16} for best comparisons.  
At the end of the section, we will also show briefly how to use these schemes in order to approximate simple two dimensional problems. To be precise, in the following examples  we will refer to the standard CFL condition
\begin{equation}
\lambda \max|H_p(p)|\leq 1,
\end{equation}
to define $\lambda$ , which is alternative to (\ref{cond_CFL}) and more easily computed. \\
All the numerical tests have been implemented in language C++, with plots generated by using MATLAB. 
The computer used for the simulations is a Notebook Asus F556U Intel Core i7-6500U with speed of 2.59 GHz and 12 GB of RAM. 

{\bf Example 1: Transport equation.}
In order to test the capability of our scheme to handle both regular and singular regions, let us begin with a simple linear example and consider the problem
\begin{equation*}
\label{ex1:eq1}
\left\{
\begin{array}{l}
v_t(t,x)+v_x(t,x)=0\qquad \textrm{ in } (0,T)\times \Omega\\
v(0,x)=v_0(x),
\end{array}
\right.
\end{equation*}
with periodic boundary conditions, in two different situations. At first, aiming to test the full accuracy of the schemes, we consider the regular initial data (\emph{Case a}),
\begin{equation}
\label{ex1:eq2}
v_0(x)=\sin(\pi x), \qquad x\in \Omega
\end{equation}
with $\Omega=[-2,2]$ and $T=0.9$. Then, as a second test, we take the mixed initial datum (\emph{Case b}),
\begin{equation}
\label{ex1:eq3}
v_0(x)=\left\{
\begin{array}{ll}
\min\{(1-x)^2,(1+x)^2\}\qquad &\textrm{ if } -1\leq x \leq 1, \\
\sin^2(\pi (x-2))& \textrm{ if \ } 2\leq x\leq 3, \\
0 & \textrm{ otherwise},
\end{array}
\right.
\end{equation}
with $\Omega=[-1.5,3.5]$ and $T=2$. The latter problem models the transport of a function composed by two peaks, the first with one point of singularity  whereas the second is in $C^2$.
For these tests we use the \emph{Central Upwind scheme} (\ref{central_upwind}) as monotone scheme and the simple \emph{Heun-Centered} (HC) scheme (\ref{heun})-(\ref{RK2}) as high-order scheme, with $\lambda=0.9$ for \emph{Case a} and $\lambda=0.4$ for \emph{Case b}. We also compare the results obtained using $S^{AF}$ with the \emph{$4$th order Lax-Wendroff} scheme (\ref{lw_4ord}) as high-order scheme. We recall that the latter high-order scheme has a very compact $5$-points stencil, whereas the WENO scheme of second/third order (coupled with the \emph{third order Runge Kutta scheme}) has a stencil of nine points.
\begin{figure}[h!]
\includegraphics[width=0.45\textwidth]{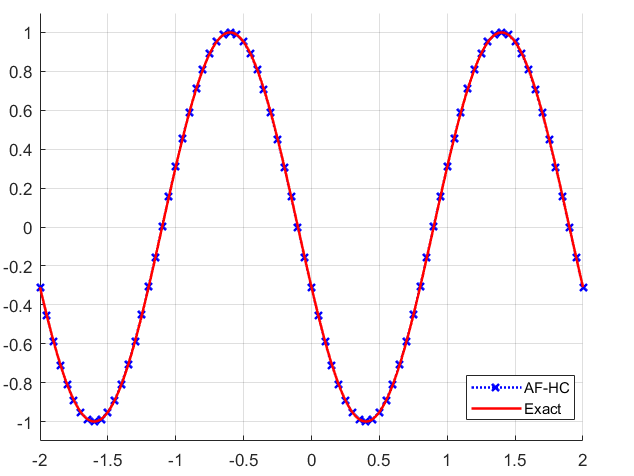}
\includegraphics[width=0.45\textwidth]{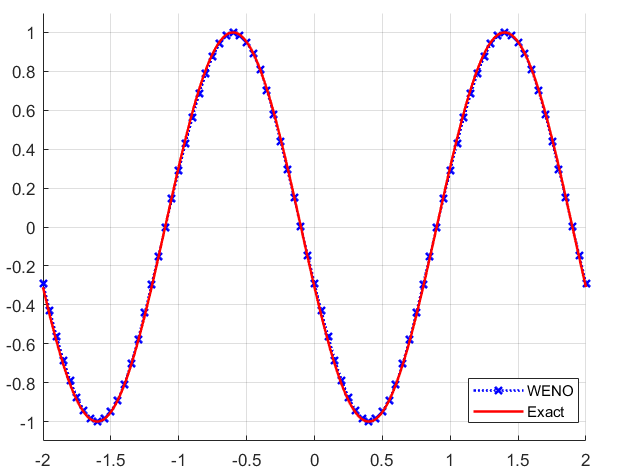}
\caption{\small{(Example 1a.) Plots at time $T=0.9$ with the AF-HC scheme on the left and WENO on the right for $\Delta x=0.05$. }\label{ex1:fig1}}
\end{figure}

\begin{table}[h!]
\caption{(Example 1a.) Errors and orders in $L^\infty$ and  $L^1$ norms. \label{ex1:table1}}
\vspace{-0.3 cm}
\begin{center}
\hspace*{-0.3cm}
\begin{tabular}{c c|c c|c c|c c| c c}
& & \multicolumn{2}{|c|}{\bf F-HC ($5\Delta x$)}&\multicolumn{2}{|c|}{\bf AF-HC}& \multicolumn{2}{|c|}{\bf AF-LW4ord}&\multicolumn{2}{|c}{\bf WENO 2/3}\\
\hline
\hline
$N_x$ & $N_t$ &$L^\infty$ Err &Ord  & $L^\infty$ Err & Ord& $L^\infty$ Err & Ord & $L^\infty$ Err & Ord \\
\hline
$40$	& $10$  &$1.36$e-$02$ 	& &	$1.70$e-$02$	& &  $7.88$e-$03$ &	&	$8.02$e-$02$	\\

$80$	& $20$ & $2.56$e-$03$	& $2.41$ & $2.56$e-$03$	& $2.73$ & $8.66$e-$06$ &	$9.83$ & $2.62$e-$02$ &	$1.62$   \\

$160$	& $40$ & $5.76$e-$04$ &	$2.15$  & $5.76$e-$04$ &	$2.15$  & $5.43$e-$07$ &	$4.00$	& $4.50$e-$03$ & 	$2.54$  \\

$320$	& $80$ & $1.40$e-$04$ &	$2.04$	& $1.40$e-$04$ &	$2.04$ & $3.40$e-$08$ &	$4.00$ & $1.95$e-$04$ &	$4.52$ \\
\hline
\hline
$N_x$ & $N_t$ &$L^1$ Err &Ord  & $L^1$ Err & Ord& $L^1$ Err & Ord & $L^1$ Err & Ord \\
\hline
$40$	& $10$ &  $3.58$e-$02$	& & $3.29$e-$02$	& & $1.08$e-$02$	&	&	$2.07$e-$01$		\\

$80$	& $20$ & $6.66$e-$03$ &	$2.43$ & $6.66$e-$03$ &	$2.30$ & $2.25$e-$05$ &	$8.90$ & $4.14$e-$02$ & 	$2.32$   \\

$160$	& $40$ & $1.48$e-$03$ &	$2.17$  & $1.48$e-$03$ &	$2.17$  & $1.40$e-$06$ &	$4.01$	& $5.09$e-$03$ &	 $3.02$  \\

$320$	& $80$ & $3.57$e-$04$ &	$2.05$	 & $3.57$e-$04$ &	$2.05$  & $8.69$e-$08$ &	$4.01$ & $3.08$e-$04$ &	$4.05$ \\
\hline
\end{tabular}
\end{center}
\end{table}

In the first case (\emph{Case a}) of this test, all the schemes are very accurate and achieve optimal order in both norms, as shown in Tab. \ref{ex1:table1}. In this case, both filtered schemes have the same numerical results, except for a slight difference with the coarstest grid, and coincide with the simple HC high-order scheme, as expected (we avoided to add another column in the table to report also the results for the HC high-order scheme since they are the same). Moreover, we can see that our fourth order scheme is much more accurate even than the WENO scheme, despite the smaller stencil required. 
In Fig. \ref{ex1:fig1} we reported only the AF-HC scheme and the WENO scheme,
avoiding to show all the schemes since no differences are visible for that case. 

\begin{figure}[h!]
\includegraphics[width=0.5\textwidth]{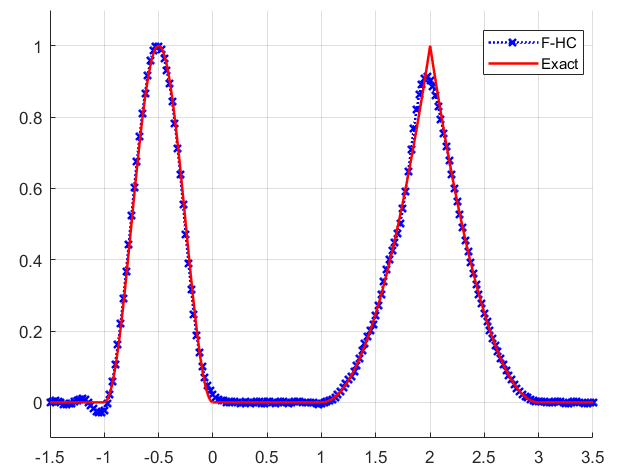}
\includegraphics[width=0.5\textwidth]{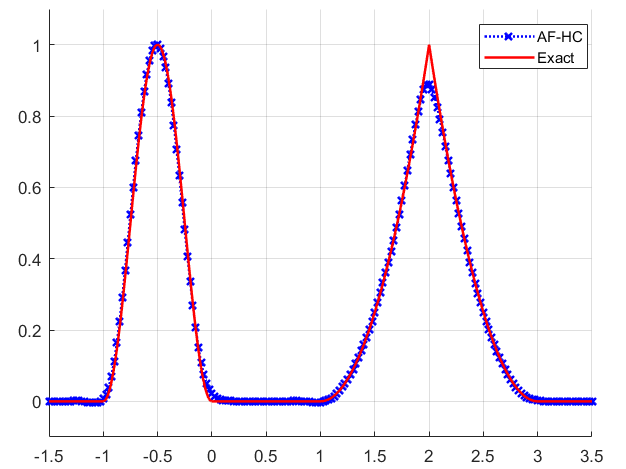}
\includegraphics[width=0.5\textwidth]{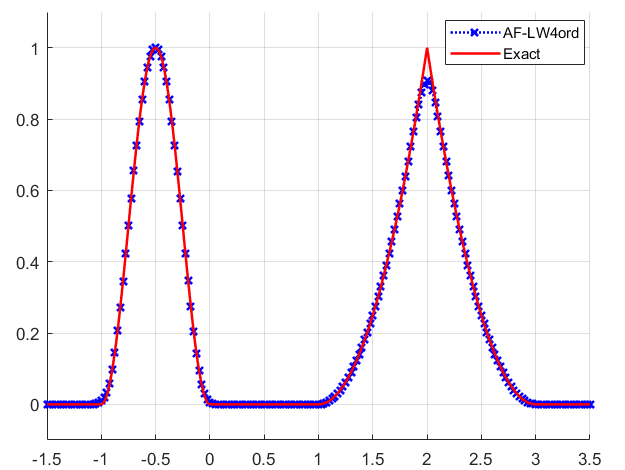}
\includegraphics[width=0.5\textwidth]{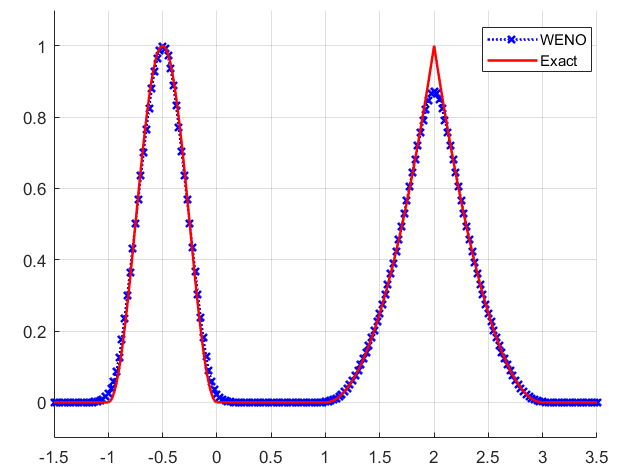}
\caption{\small{(Example 1b.) Plots of the solution at time $T=2$ with $\Delta x=0.025$. Top: simple filtered scheme with HC on the left, adaptive on the right. Bottom: fourth order AF scheme on the left and WENO on the right. }\label{ex1:fig2}}
\end{figure}

\begin{table}[h!]
\caption{(Example 1b.) Errors and orders in $L^\infty$ and  $L^1$ norms. \label{ex1:table2}}
\vspace{-0.3 cm}
\begin{center}
\hspace*{-0.3cm}
\begin{tabular}{c c|c c|c c|c c|c c}
& & \multicolumn{2}{|c|}{\bf F-HC ($10\Delta x$)}&\multicolumn{2}{|c|}{\bf AF-HC}& \multicolumn{2}{|c|}{\bf AF-LW4ord}&\multicolumn{2}{|c}{\bf WENO 2/3}\\
\hline
\hline
$N_x$ & $N_t$ &$L^\infty$ Err &Ord  & $L^\infty$ Err & Ord& $L^\infty$ Err & Ord & $L^\infty$ Err & Ord  \\
\hline
$50$	& $50$ &       $3.46$e-$01$		&	&	$3.55$e-$01$ &	& $3.31$e-$01$	& &	$3.47$e-$01$		\\

$100$	& $100$  & $1.41$e-$01$	& $1.29$ & $1.90$e-$01$	& $0.90$ & $1.72$e-$01$ &	$0.94$  & $2.07$e-$01$	& $0.75$ \\

$200$	& $200$ &  $9.69$e-$02$	& $0.54$	& $1.17$e-$01$	& $0.70$ & $9.72$e-$02$ &	$0.82$ & $1.28$e-$01$	& $0.70$ \\

$400$	& $400$ &  $7.29$e-$02$	& $0.41$ & $7.27$e-$02$	& $0.69$   & $5.47$e-$02$ &	$0.83$	& $7.66$e-$02$	& $0.74$ \\
\hline
\hline
$N_x$ & $N_t$ &$L^1$ Err &Ord  & $L^1$ Err & Ord& $L^1$ Err & Ord & $L^1$ Err & Ord \\
\hline
$50$	& $50$ &   $4.34$e-$01$		&	&	$3.31$e-$01$ 	&	& $2.68$e-$01$	& &	$3.62$e-$01$		\\

$100$	& $100$ &  $1.41$e-$01$	& $1.63$ & $1.19$e-$01$	& 	$1.47$  & $9.27$e-$02$	& 	$1.53$ & $1.39$e-$01$	& $1.39$ \\

$200$	& $200$ &  $4.24$e-$02$	& $1.73$	& $3.03$e-$02$	& 	$1.98$  & $1.30$e-$02$	& 	$2.83$ & $3.83$e-$02$	& $1.86$ \\

$400$	& $400$ &  $1.38$e-$02$	& $1.62$ & $9.51$e-$03$	& 	$1.67$  & $3.07$e-$03$	& 	$2.08$ & $8.39$e-$03$	& $2.19$ \\
\hline
\end{tabular}
\end{center}
\end{table}

\begin{table}[h!]
\caption{(Example 1b.) CPU times in seconds. \label{ex1:table3}}
\centering
\begin{tabular}{c c| p{1.8cm}  | p{1.8cm} | p{1.8cm}  | c }
$N_x$ & $N_t$ & \centering {\bf  F-HC } & \centering {\bf AF-HC }& \centering {\bf AF-LW4ord}& {\bf WENO 2/3}\\
\hline
\hline
$50$	& $50$ &\centering 	$0.000\ s$ 	&\centering 	$0.001\ s$ 	&\centering $0.001\ s$  &$0.002\ s$   \\

$100$	& $100$ &\centering 	$0.001\ s$  &\centering 	$0.004\ s$   &\centering $0.005\ s$  	& $0.006\ s$    \\

$200$ & $200$ &\centering 	$0.004\ s$  &\centering  	$0.016\ s$   &\centering $0.018\ s$   	& $0.026\ s$ \\

$400$ &	$400$ &\centering 	$0.019\ s$  &\centering  	$0.061\ s$   &\centering $0.077\ s$   	 & $0.095\ s$ 	\\
\hline
\end{tabular}
\end{table}

For the second case (\emph{Case b}), looking at Fig. \ref{ex1:fig2} we can observe that  the adaptive tuning of $\varepsilon^n$ is able to contain the oscillations behind the peaks produced by the unstable HC scheme, which are clearly visible instead in the case of $S^F$ with $\varepsilon=10\Delta x$. We can also see that our  scheme coupled with the fourth order scheme produces again almost always the best results in terms of errors and orders in both norms (see Tab. \ref{ex1:table2}) and gives the best resolution of the peaks, preserving better the kink of the singularity and the feet of the regular part, without introducing any oscillation. In Tab.  \ref{ex1:table3} we reported the CPU times for this \emph{Case b}, in which the evolution lasts longer. All the schemes are very fast and complete the computations in less than $0.1$ s for all the refinements. Note that our two adaptive filtered schemes perform faster than the WENO scheme, even in the case of the fourth-order scheme. On the other hand, as could be expected, the adaptive procedure increases the cost of the filtering process three/four times depending on the refinement with respect to the basic filtered scheme. 

{\bf Example 2: Eikonal equation.} As a first nonlinear problem let us consider the eikonal equation
\begin{equation}
\label{ex2:eq1}
\left\{
\begin{array}{l}
v_t(t,x)+|v_x(t,x)|=0\qquad \textrm{ in } (0,0.3)\times(-2,2), \\
v_0(x)=\max\{1-x^2,0\}^4,
\end{array}
\right.
\end{equation}
where $v_0$ is a Lipschitz continuous initial datum with high regularity (\emph{Case a}). Then, we repeat the simulation with the ``reversed'' initial datum (\emph{Case b}) 
\begin{equation}
\label{ex2:eq2}
v_0(x)=-\max\{1-x^2,0\}^4,
\end{equation}
which presents also a major problem in the origin because of the saddle point in the hamiltonian, where two directions of propagation occur. Here the aim is mainly to compare the results obtained by the unfiltered high-order schemes with their filtered versions, in order to show the stabilization property of the filtering process. For the monotone scheme we use the numerical hamiltonian (\ref{hm_eik}), whereas to achieve high-order we use the \emph{Lax-Wendroff-Richtmyer (LWR)} scheme (\ref{lwr}). Moreover, as in the previous example, we present also the results obtained with the AF scheme coupled with the fourth order LW scheme. The CFL number is set to $0.375$ for both simulations.
\begin{figure}[h!]
\includegraphics[width=0.327\textwidth]{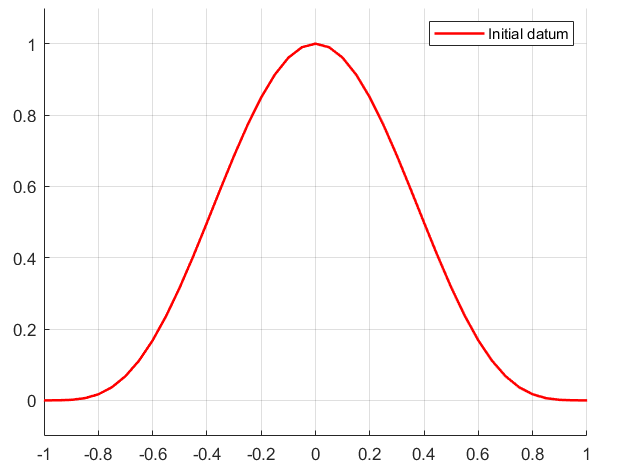}
\includegraphics[width=0.327\textwidth]{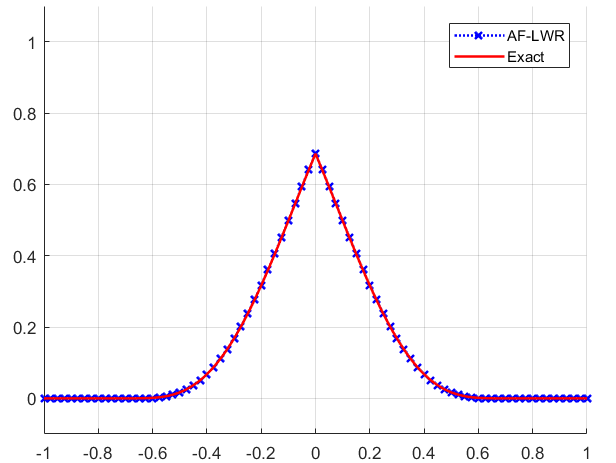}
\includegraphics[width=0.327\textwidth]{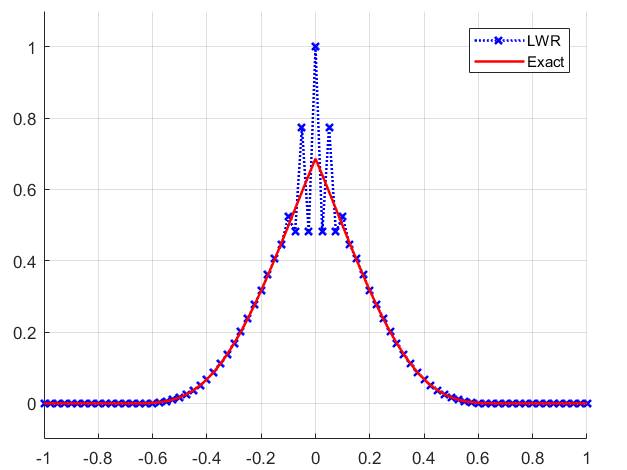}
\caption{\small{(Example 2a.) Initial datum (left) and plots of the solution at time $T=0.3$ with the AF scheme (center) and the LWR scheme (right) for $\Delta x=0.025$.}\label{ex2:fig1}}
\end{figure}

\begin{table}[h!]
\caption{(Example 2a.) Errors and orders in $L^\infty$ and  $L^1$ norms. \label{ex2:table1}}
\vspace{-0.3 cm}
\begin{center}
\hspace*{-0.3cm}
\begin{tabular}{c c|c c|c c|c c| c c}
& &\multicolumn{2}{|c|}{\bf F-LWR ($5\Delta x$)}&\multicolumn{2}{|c|}{\bf AF-LWR}& \multicolumn{2}{|c|}{\bf AF-LW4ord}&\multicolumn{2}{|c}{\bf WENO 2/3}\\
\hline
\hline
$N_x$ & $N_t$ &$L^\infty$ Err &Ord  & $L^\infty$ Err & Ord& $L^\infty$ Err & Ord & $L^\infty$ Err & Ord \\
\hline
$40$	& $8$ &	$1.96$e-$02$	& & $1.89$e-$02$ & &  $1.95$e-$02$	& & $6.81$e-$02$			\\

$80$	& $16$  & $4.48$e-$03$	& $2.13$ & $3.56$e-$03$ &	$2.41$   & $1.04$e-$02$ &	$0.90$ & $3.42$e-$02$&	$1.00$  \\

$160$	& $32$  & $1.06$e-$03$	& $2.08$ & $8.53$e-$04$ &	$2.06$ & $1.45$e-$03$ &	$2.85$	& $1.62$e-$02$&	$1.08$  \\

$320$	& $64$  & $2.56$e-$04$	& $2.05$	& $2.20$e-$04$ &	$1.96$ & $2.31$e-$04$ &	$2.65$ & $7.52$e-$03$&	$1.11$  \\
\hline
\hline
$N_x$ & $N_t$ &$L^1$ Err &Ord  & $L^1$ Err & Ord& $L^1$ Err & Ord & $L^1$ Err & Ord \\
\hline
$40$	& $8$ &	$1.52$e-$02$	& &  $1.63$e-$02$	& &  $1.28$e-$02$  & &  $2.05$e-$02$\\

$80$	& $16$  & $3.78$e-$03$	& $2.01$  & $3.61$e-$03$	& 	$2.17$ & $1.11$e-$03$	& 	$3.53$   & $4.68$e-$03$	&$2.13$\\

$160$	& $32$   & $8.94$e-$04$	& $2.08$ & $8.80$e-$04$	& 	$2.04$ & $7.48$e-$05$	& 	$3.89$	& $9.55$e-$04$	&$2.29$ \\

$320$	& $64$  & $2.09$e-$04$	& $2.09$	& $2.08$e-$04$	& 	$2.08$ & $7.14$e-$06$	& 	$3.39$ & $1.40$e-$04$	&$2.78$ \\
\hline
\end{tabular}
\end{center}
\end{table}

Let us first point out that, as Figs. \ref{ex2:fig1} - \ref{ex2:fig2} clearly show, the LWR scheme is unstable in the origin in both situations, whereas the AF scheme (and the simple filtered scheme) is stable. 
Then, for the first case, looking at Tab. \ref{ex2:table1} we can see that the filtered-LWR schemes  give almost the same results, are of high-order in both norms and get lower errors with respect to the WENO scheme in almost all simulations. 
Moreover, we can recognize the typical improvements and drawbacks of the fourth order LW scheme, which has a slightly wider stencil. In fact, as will be shown also in the following examples, the scheme has bigger errors in the $L^\infty$ norm  with respect to the second order AF scheme whereas has way better errors and orders in the $L^1$ norm, achieving almost optimal order, which testifies the overall improvement. 

\begin{figure}[h!]
\includegraphics[width=0.45\textwidth]{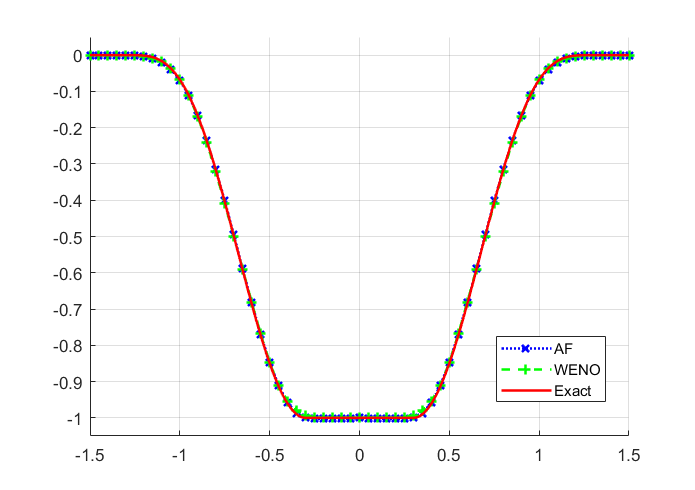}
\includegraphics[width=0.45\textwidth]{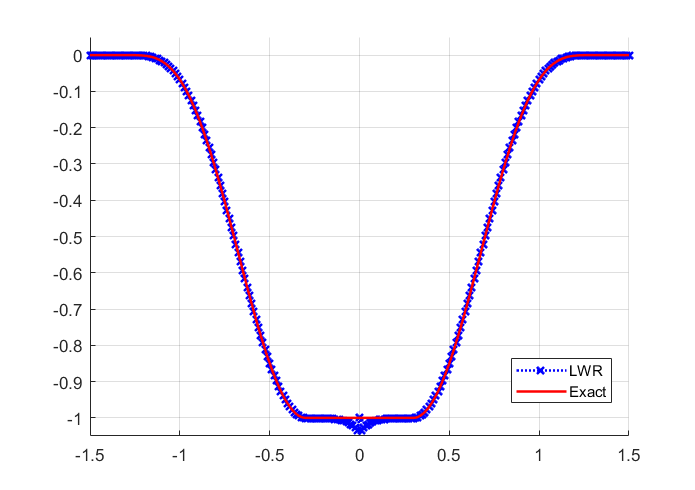}
\caption{\small{(Example 2b.) Plots at time $T=0.3$ with the AF and WENO schemes with $\Delta x=0.05$ (left) and LWR scheme with $\Delta x=0.0125$ (right).}\label{ex2:fig2}}
\end{figure}

\begin{table}[h!]
\caption{(Example 2b.) Errors and orders in $L^\infty$ and  $L^1$ norms. \label{ex2:table2}}
\vspace{-0.3 cm}
\begin{center}
\hspace*{-0.3cm}
\begin{tabular}{c c|c c|c c|c c| c c}
& &\multicolumn{2}{|c|}{\bf F-LWR ($5\Delta x$)}&\multicolumn{2}{|c|}{\bf AF-LWR}& \multicolumn{2}{|c|}{\bf AF-LW4ord}&\multicolumn{2}{|c}{\bf WENO 2/3}\\
\hline
\hline
$N_x$ & $N_t$ &$L^\infty$ Err &Ord  & $L^\infty$ Err & Ord& $L^\infty$ Err & Ord & $L^\infty$ Err & Ord \\
\hline
$40$	& $8$ &	$1.91$e-$02$	& & $2.35$e-$02$ &  &  $2.42$e-$02$	& & $2.33$e-$02$			\\

$80$	& $16$  & $9.24$e-$03$	& $1.04$ & $3.37$e-$03$ &	$2.80$  & $7.51$e-$03$	& $1.69$  & $1.02$e-$02$&	$1.19$ \\

$160$	& $32$  & $5.77$e-$03$ & $0.68$ & $1.58$e-$03$ &	$1.09$ & $2.14$e-$03$ &	$1.81$	& $4.10$e-$03$&	$1.32$  \\

$320$	& $64$  & $3.46$e-$03$	& $0.74$	& $7.09$e-$04$ &	$1.16$ & $6.92$e-$04$	& $1.63$  & $1.22$e-$03$&	$1.75$\\
\hline
\hline
$N_x$ & $N_t$ &$L^1$ Err &Ord  & $L^1$ Err & Ord& $L^1$ Err & Ord & $L^1$ Err & Ord \\
\hline
$40$	& $8$ 	&	$2.38$e-$02$	& & $2.24$e-$02$	& &  $2.28$e-$02$  & &  $2.96$e-$02$\\

$80$	& $16$  & $8.48$e-$03$	& $1.49$  & $5.70$e-$03$	& $1.98$ & $2.05$e-$03$	& $3.48$   & $7.04$e-$03$	&$2.07$\\

$160$	& $32$  & $3.41$e-$03$	& $1.32$ &  $1.82$e-$03$ &	$1.65$  & $3.20$e-$04$ &	$2.68$ & $1.43$e-$03$	&$2.30$ \\

$320$	& $64$  & $1.52$e-$03$	& $1.17$	& $5.84$e-$04$	& $1.64$ & $6.38$e-$05$	& $2.33$  & $2.82$e-$04$	&$2.34$\\
\hline
\end{tabular}
\end{center}
\end{table}

For \emph{Case b}, looking at Tab. \ref{ex2:table2} we can repeat almost the same considerations made for \emph{Case a}, but this time the improvements given by the adaptive filtering are more evident. The AF-LWR scheme is again of high-order especially in the $L^1$ norm, without the need to introduce any limiter as has been done in \cite{BFS16}, and the numerical results are always comparable to those obtained by the WENO scheme of second/third order, whereas the AF-LW4ord scheme produces again bigger errors in $L^\infty$ with respect to the second-order AF-LWR scheme and better orders in the $L^1$ norm. 

{\bf Example 3: Burgers' equation.} Let us consider now the Burgers' equation for HJ with a regular initial datum 
\begin{equation}
\label{ex3:eq1}
\left\{
\begin{array}{l}
v_t(t,x)+\frac{1}{2}(v_x(t,x)+1)^2=0\qquad \textrm{ in } (0,T)\times(0,2), \\
v_0(x)=-\cos(\pi x),
\end{array}
\right.
\end{equation}
which is a test case widely used in literature. In order to test the full accuracy of the schemes even in the nonlinear case, we first run the simulation for $T=\frac{4}{5\pi^2}$, when the solution is still regular, with $\lambda=\frac{2}{\pi^2}\approx 0.2<{\max|H_p|}^{-1}=0.5$. Then, we consider the final time $T=\frac{3}{2\pi^2}$ when a moving (to the right) singularity appears, taking $\lambda=\frac{15}{8\pi^2}\approx 0.19$. For both simulations we use the the Central Upwind monotone scheme and the LWR scheme for both the filtered schemes and compare the results as before with the WENO scheme and the fourth order AF scheme. 
In Fig. \ref{ex3:fig1} we report the intial datum of the problem and the solution produced by the AF-LWR scheme at the two different times in order to show the different behavior. 
\begin{figure}[h!]
	\includegraphics[width=0.327\textwidth]{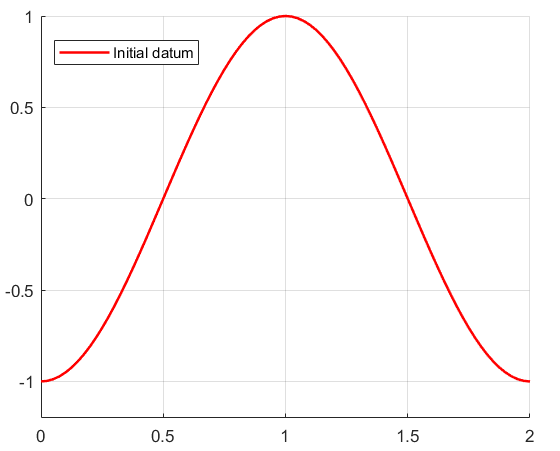}
	\includegraphics[width=0.327\textwidth]{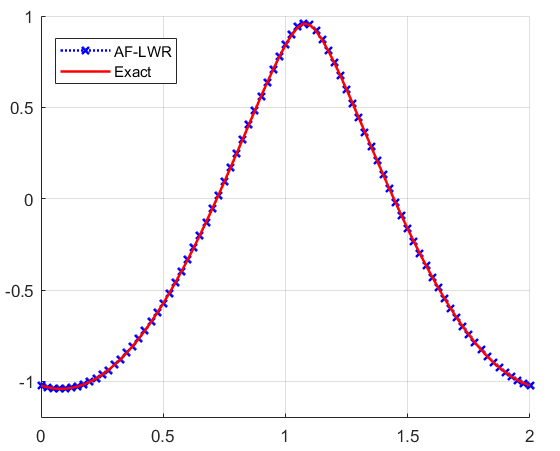}
	\includegraphics[width=0.327\textwidth]{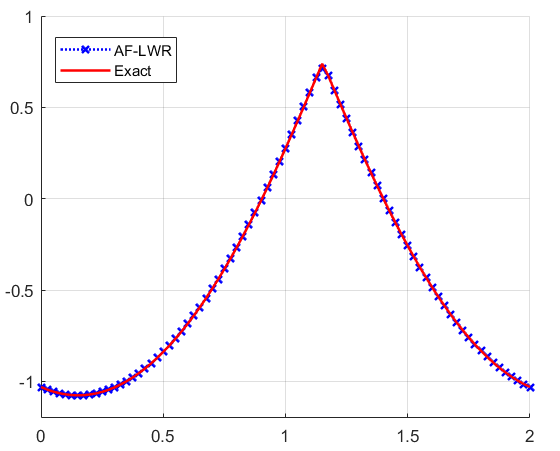}
	\caption{\small{(Example 3.) From left to right: initial datum of problem (\ref{ex3:eq1}) and plots of the solution with AF-LWR at time $T=4/(5\pi^2)$ and $T=3/(2\pi^2)$ for $\Delta x=0.025$.}\label{ex3:fig1}}
\end{figure}

This example summarizes all the behaviors already seen in the previous cases. In fact, as displayed by Tabs. \ref{ex3:table1}-\ref{ex3:table2}, if the solution is still regular the fourth order AF scheme gives the best results and achieves the optimal order in both norms, whereas when the singularity appears, it gets bigger errors in $L^\infty$ norm but  lower errors and better orders in the $L^1$ norm with respect to the second order filtered schemes. Here we have to notice that the WENO scheme has better errors and orders in the second simulation with respect to all the filtered schemes. 
Moreover, we can clearly see that the simple filtered scheme depends heavily on the choice of $\ep$, in fact after extensive computations we noticed that choosing for example $\ep=5\Delta x$ we get worse results in both cases, whereas if we increase the constant we get better results in the regular case and worse in the latter. In the tables we presented the results for the choice that gives the best results in the singular case, whereas  
it has clearly problems in the first situation. This is the main advantage of the adaptive $\ep^n$ which is able to tune itself in the right way depending on the local (in time) regularity of the solution. 

\begin{table}[h!]
\caption{(Example 3.) $T=4/(5\pi^2)$. Errors and orders in $L^\infty$ and  $L^1$ norms. \label{ex3:table1}}
\vspace{-0.3 cm}
\begin{center}
\hspace*{-0.3cm}
\begin{tabular}{c c|c c|c c|c c| c c}
& &\multicolumn{2}{|c|}{\bf F-LWR ($10\Delta x$)}&\multicolumn{2}{|c|}{\bf AF-LWR}& \multicolumn{2}{|c|}{\bf AF-LW4ord}&\multicolumn{2}{|c}{\bf WENO 2/3}\\
\hline
\hline
$N_x$ & $N_t$ &$L^\infty$ Err &Ord  & $L^\infty$ Err & Ord& $L^\infty$ Err & Ord & $L^\infty$ Err & Ord \\
\hline
$40$	& $8$ &	$1.30$e-$02$	& & $9.61$e-$03$ & &  $1.89$e-$03$	& & $1.04$e-$02$			\\

$80$	& $16$  & $8.67$e-$03$	& $0.59$ & $2.77$e-$03$	& $1.79$ & $2.84$e-$04$	& $2.73$   & $2.12$e-$03$	& $2.30$ \\

$160$	& $32$  &$5.07$e-$03$	& $0.77$ & $7.24$e-$04$	& $1.94$ & $2.68$e-$05$	& $3.41$	& $1.82$e-$04$	& $3.54$  \\

$320$	& $64$  & $2.66$e-$03$	& $0.93$	& $1.83$e-$04$	& $1.99$ & $1.89$e-$06$	& $3.83$  & $2.05$e-$05$	& $3.15$\\
\hline
\hline
$N_x$ & $N_t$ &$L^1$ Err &Ord  & $L^1$ Err & Ord& $L^1$ Err & Ord & $L^1$ Err & Ord \\
\hline
$40$	& $8$ &	$3.76$e-$03$	& & $3.13$e-$03$	& &  $3.31$e-$04$  & &  $3.67$e-$03$\\

$80$	& $16$  & $1.29$e-$03$	& $1.54$  &$8.20$e-$04$	& $1.93$ & $1.85$e-$05$	& $4.16$  & $6.57$e-$04$	& $2.48$\\

$160$	& $32$   & $4.49$e-$04$	& $1.52$ & $2.04$e-$04$	& $2.01$ & $1.43$e-$06$	& $3.70$	& $5.43$e-$05$	& $3.60$ \\

$320$	& $64$  & $1.82$e-$04$	& $1.30$	&  $5.09$e-$05$	& $2.00$ & $9.80$e-$08$	&$3.86$ & $2.98$e-$06$	& $4.19$\\
\hline
\end{tabular}
\end{center}
\end{table}

\begin{table}[h!]
\caption{(Example 3.) $T=3/(2\pi^2)$. Errors and orders in $L^\infty$ and  $L^1$ norms. \label{ex3:table2}}
\vspace{-0.3 cm}
\begin{center}
\hspace*{-0.3cm}
\begin{tabular}{c c|c c|c c|c c| c c}
& &\multicolumn{2}{|c|}{\bf F-LWR ($10\Delta x$)}&\multicolumn{2}{|c|}{\bf AF-LWR}& \multicolumn{2}{|c|}{\bf AF-LW4ord}&\multicolumn{2}{|c}{\bf WENO 2/3}\\
\hline
\hline
$N_x$ & $N_t$ &$L^\infty$ Err &Ord  & $L^\infty$ Err & Ord& $L^\infty$ Err & Ord & $L^\infty$ Err & Ord \\
\hline
$40$	& $16$ 	&	$4.88$e-$02$	& &$5.53$e-$02$ & &  $5.86$e-$02$ & & $3.89$e-$02$			\\

$80$	& $32$  & $2.47$e-$02$	& $0.98$ &$2.50$e-$02$	& $1.15$ &$2.62$e-$02$	& $1.16$   & $1.61$e-$02$	& $1.27$ \\

$160$ & $64$  &$9.81$e-$03$	& $1.33$ &  $9.99$e-$03$	& $1.32$ &$1.03$e-$02$	& $1.34$	& $5.12$e-$03$	& $1.65$  \\

$320$ & $128$  & $2.57$e-$03$	& $1.93$ & $2.59$e-$03$	& $1.95$ & $2.67$e-$03$	&  $1.95$   & $8.40$e-$04$ & $2.61$\\
\hline
\hline
$N_x$ & $N_t$ &$L^1$ Err &Ord  & $L^1$ Err & Ord& $L^1$ Err & Ord & $L^1$ Err & Ord \\
\hline
$40$	& $16$ 	&	$5.17$e-$03$	& & $5.38$e-$03$	& & $3.18$e-$03$  & &  $3.69$e-$03$\\

$80$	& $32$  &$1.26$e-$03$	& $2.03$  & $1.28$e-$03$	& $2.08$ & $6.73$e-$04$	& $2.24$   & $6.94$e-$04$	& $2.41$\\

$160$ & $64$   & $2.86$e-$04$	& $2.14$ & $2.87$e-$04$	& $2.15$ & $1.31$e-$04$	&  $2.36$	& $8.67$e-$05$	& $3.00$ \\

$320$ & $128$  & $5.68$e-$05$ & 	$2.33$	&  $5.68$e-$05$ & 	$2.34$	& $1.70$e-$05$	&$2.95$ & $6.40$e-$06$	& $3.76$\\
\hline
\end{tabular}
\end{center}
\end{table}

\begin{figure}[h!]
	\includegraphics[width=0.48\textwidth]{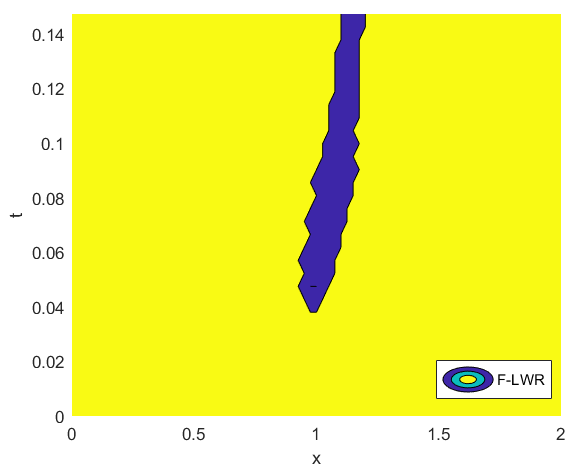}
	\includegraphics[width=0.48\textwidth]{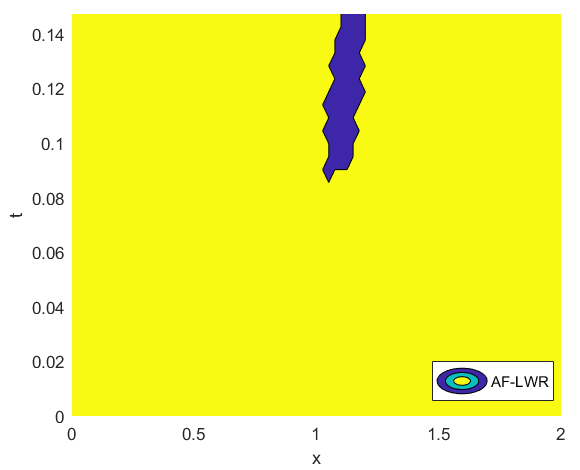}
	\caption{\small{(Example 3.) Regions of activity of $S^M$ (blue) and $S^A$ (yellow) for the F-LWR scheme (left) and the AF-LWR scheme (right) with $\Delta x=0.025$.}\label{ex3:fig2}}
\end{figure}

\begin{table}[h!]
	\caption{(Example 3.) $T=3/(2\pi^2)$. CPU times in seconds. \label{ex3:table3}}
	\centering
	\begin{tabular}{c c| p{1.8cm}  | p{1.8cm} | p{1.8cm}  | c }
		$N_x$ & $N_t$ & \centering {\bf  F-LWR } & \centering {\bf AF-LWR }& \centering {\bf AF-LW4ord}& {\bf WENO 2/3}\\
		\hline
		\hline
		$40$	& $16$ &\centering 	$0.000\ s$ 	&\centering 	$0.000\ s$ 	&\centering $0.001\ s$  &$0.000\ s$   \\
		
		$80$	& $32$ &\centering 	$0.000\ s$  &\centering 	$0.001\ s$   &\centering $0.002\ s$  	& $0.001\ s$    \\
		
		$160$ & $64$ &\centering 	$0.001\ s$  &\centering  	$0.004\ s$   &\centering $0.005\ s$   	& $0.004\ s$ \\
		
		$320$ &	$128$ &\centering 	$0.005\ s$  &\centering  	$0.016\ s$   &\centering $0.020\ s$   	 & $0.016\ s$ 	\\
		\hline
	\end{tabular}
\end{table}

In order to give a visual evidence of that latter property, in Fig.  \ref{ex3:fig2} we reported the regions of activity of the schemes composing the two second order filtered schemes. There we can clearly see that our procedure is able to better localize the presence of the singularity, whereas when the solution is still regular the high-order scheme is always active. On the other hand, if we look at the computational times in Tab.  \ref{ex3:table3}, we can see that the basic filtered scheme is of course the fastest scheme, whereas the other three schemes have very similar CPU times, with the fourth order scheme performing slightly slower. 

{\bf Example 4: Nonconvex Hamiltonian.}
In this example we consider a well known test case for nonconvex Hamiltonians (see e.g. \cite{JP00}), that is
\begin{equation*}
\label{ex1n:eq1n}
\left\{
\begin{array}{ll}
v_t(t,x)-\cos (v_x(t,x)+1)=0\qquad \textrm{ in } (0,T)\times (-1,1)\\
v(0,x)=-\cos(\pi x),
\end{array}
\right.
\end{equation*}
with periodic boundary conditions and final time $T=3/(2\pi^2)$, when two singularities appear in the solution, as can be seen in Fig. \ref{ex1n:fig1}. \begin{figure}[h!]
\includegraphics[width=0.45\textwidth]{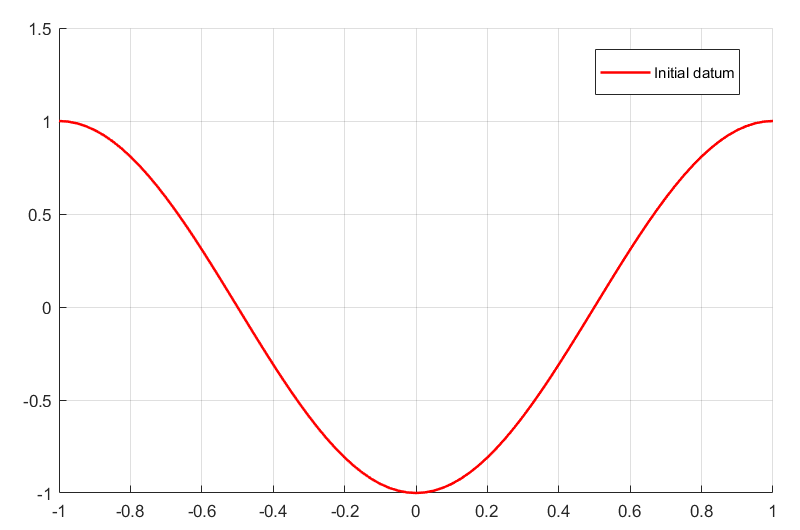} 
\includegraphics[width=0.45\textwidth]{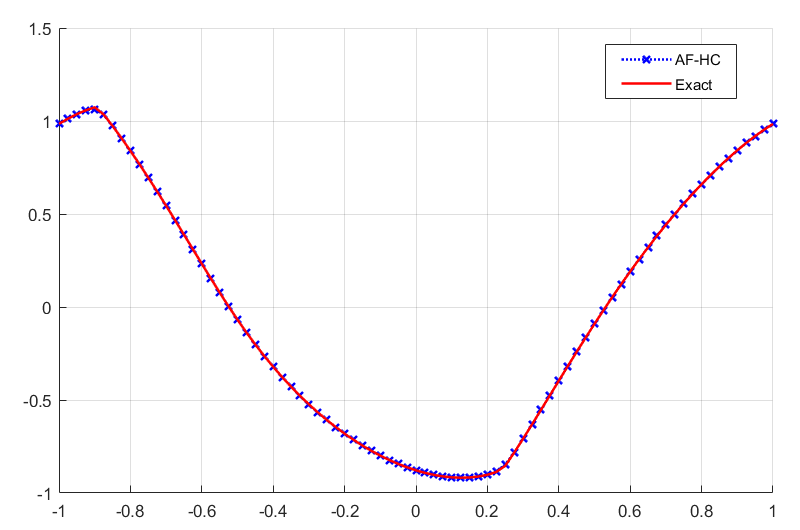} 
\caption{\small{(Example 4.) Initial datum on the left and plots of the exact and the AF-HC solutions at time $T=3/(2\pi^2)$ for $\Delta x=0.025$ on the right. }\label{ex1n:fig1}}
\end{figure}
In order to define the monotone scheme for this test, we use the Lax-Friedrichs hamiltonian \eqref{eq:LF_hamiltonian} with $\theta=1$ as in \cite{BFS16}, whereas the CFL number is set to $0.31$. A reference solution is computed by using the AF-LW4ord scheme with $10240$ points. \\
We reported the global errors in Tab. \ref{ex1n:table1} and the errors far away from singular points (regularity region) in Tab. \ref{ex1n:table2}. More precisely, for the second table we consider the set of grid points $x$ such that $|x-x_i|\geq 0.05$, for $i=1,2$, where $x_1=-0.895$ and $x_2=0.245$ are approximately the position of the singularities. 
%
\begin{table}[h!]
\caption{(Example 4.) Global Errors and orders in $L^\infty$ and  $L^1$ norms. \label{ex1n:table1}}
\vspace{-0.3 cm}
\begin{center}
\hspace*{-0.3cm}
\begin{tabular}{c c|c c|c c|c c| c c}
& & \multicolumn{2}{|c|}{\bf F-HC ($5\Delta x$)}&\multicolumn{2}{|c|}{\bf AF-HC}& \multicolumn{2}{|c|}{\bf AF-LW4ord}&\multicolumn{2}{|c}{\bf WENO 2/3}\\
\hline
\hline
$N_x$ & $N_t$ &$L^\infty$ Err &Ord  & $L^\infty$ Err & Ord& $L^\infty$ Err & Ord & $L^\infty$ Err & Ord \\
\hline
$40$	& $10$  &$3.41$e-$02$ 	& &	$1.87$e-$02$	& &  $2.07$e-$02$ &	&	$1.40$e-$02$	\\

$80$	& $20$ & $1.69$e-$02$	& $1.01$ & $8.08$e-$03$	& $1.21$ & $8.75$e-$02$ &	$1.24$ & $4.88$e-$03$ &	$1.51$  \\

$160$	& $40$ & $9.12$e-$03$ &	$0.89$  & $3.07$e-$03$ &	$1.40$  & $3.35$e-$03$ &	$1.38$	& $1.32$e-$03$ & 	$1.88$  \\

$320$	& $80$ & $7.35$e-$03$ &	$0.31$	& $2.89$e-$03$ &	$0.09$ & $3.16$e-$03$ &	$0.09$ & $2.08$e-$03$ &	$-0.65$ \\
\hline
\hline
$N_x$ & $N_t$ &$L^1$ Err &Ord  & $L^1$ Err & Ord& $L^1$ Err & Ord & $L^1$ Err & Ord \\
\hline
$40$	& $10$ &  $6.06$e-$03$	& & $3.06$e-$03$	& & $2.89$e-$03$	&	&	$2.93$e-$03$		\\

$80$	& $20$ & $1.97$e-$03$ &	$1.62$ & $8.10$e-$04$ &	$1.92$ & $6.85$e-$04$ &	$2.08$ & $5.37$e-$04$ & 	$2.45$   \\

$160$	& $40$ & $6.46$e-$04$ &	$1.61$  & $2.07$e-$04$ &	$1.97$  & $1.68$e-$04$ &	$2.02$	& $6.82$e-$05$ &	 $2.98$ \\

$320$	& $80$ & $2.21$e-$04$ &	$1.55$	 & $6.82$e-$05$ &	$1.60$  & $5.07$e-$05$ &	$1.73$ & $2.58$e-$05$ &	$1.40$ \\
\hline
\end{tabular}
\end{center}
\end{table}
%
\begin{table}[h!]
\caption{(Example 4.) Local Errors and orders in $L^\infty$ and  $L^1$ norms. \label{ex1n:table2}}
\vspace{-0.3 cm}
\begin{center}
\hspace*{-0.3cm}
\begin{tabular}{c c|c c|c c|c c|c c}
& & \multicolumn{2}{|c|}{\bf F-HC ($5\Delta x$)}&\multicolumn{2}{|c|}{\bf AF-HC}& \multicolumn{2}{|c|}{\bf AF-LW4ord}&\multicolumn{2}{|c}{\bf WENO 2/3}\\
\hline
\hline
$N_x$ & $N_t$ &$L^\infty$ Err &Ord  & $L^\infty$ Err & Ord& $L^\infty$ Err & Ord & $L^\infty$ Err & Ord  \\
\hline
$40$	& $10$ &       $5.52$e-$03$		&	&	$2.59$e-$03$ &	& $4.04$e-$03$	& &	$4.04$e-$03$		\\

$80$	& $20$ & $1.58$e-$03$	& $1.81$ & $4.92$e-$04$	& $2.40$ & $1.37$e-$04$ & $4.88$  & $9.31$e-$04$	& $2.12$ \\

$160$	& $40$ &  $2.97$e-$04$	& $2.41$	& $2.22$e-$04$	& $1.15$ & $1.46$e-$05$	& $3.22$ & $6.06$e-$05$	& $3.94$ \\

$320$	& $80$ &  $8.02$e-$05$	& $1.89$ & $4.33$e-$05$	& $2.36$   & $1.32$e-$07$	& $3.47$	& $2.23$e-$06$	& $4.77$ \\
\hline
\hline
$N_x$ & $N_t$ &$L^1$ Err &Ord  & $L^1$ Err & Ord& $L^1$ Err & Ord & $L^1$ Err & Ord \\
\hline
$40$	& $10$ &   $2.09$e-$03$		&	&	$1.14$e-$03$ 	&	& $6.23$e-$04$	& &	$1.38$e-$03$		\\

$80$	& $20$ &  $4.47$e-$04$	& $2.23$ & $3.07$e-$04$	& $1.91$  & $2.08$e-$05$	& $4.91$ & $2.33e$-$04$	& $	2.57$ \\

$160$	& $40$ &  $9.08$e-$05$	& $2.30$	& $8.06$e-$05$	& $1.93$  & $1.16$e-$06$	& $4.16$ & $1.62$e-$05$	& $3.84$ \\

$320$	& $80$ &  $2.13$e-$05$	& $2.09$ & $2.00$e-$05$	& $2.01$  & $5.13$e-$07$ & $1.18$ & $7.66$e-$07$	& $4.40$ \\
\hline
\end{tabular}
\end{center}
\end{table}

\noindent Looking at Tab. \ref{ex1n:table1} we can note that all the tested schemes suffer a sort of ``saturating'' effect in $L^\infty$ norm,  showing some difficulties in dropping the error in the last refinement, especially in the case of the WENO scheme. On the other hand, the high-order convergence rate in $L^1$ norm testifies the reliability of the schemes also in this situation. The best results in Tab. \ref{ex1n:table1} are clearly given by the WENO scheme, nevertheless, the AF-HC scheme performs better in terms of error and orders in both norms with respect to its basic version with $\varepsilon=5\Delta x$, whereas the fourth order scheme presents the usual behavior in $L^\infty$ norm with respect to the second order AF-HC scheme, however maintaining the same order of errors.  
If instead we look at the errors in regions of regularity reported in Tab.  \ref{ex1n:table2}, we can acknowledge that all the schemes achieve optimal order in both norms, with best results now given by the AF-LW4ord scheme, also with respect to the WENO scheme. 

As already seen in the previous Example 3, in Fig. \ref{ex4:fig2} we can recognize the ability of the AF scheme to better localize the regions of singularity with respect to the basic procedure which uses the monotone scheme way more than necessary. 

\begin{figure}[h!]
	\includegraphics[width=0.48\textwidth]{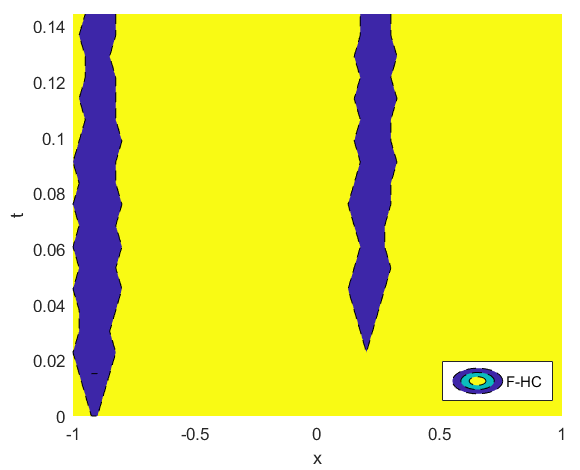}
	\includegraphics[width=0.48\textwidth]{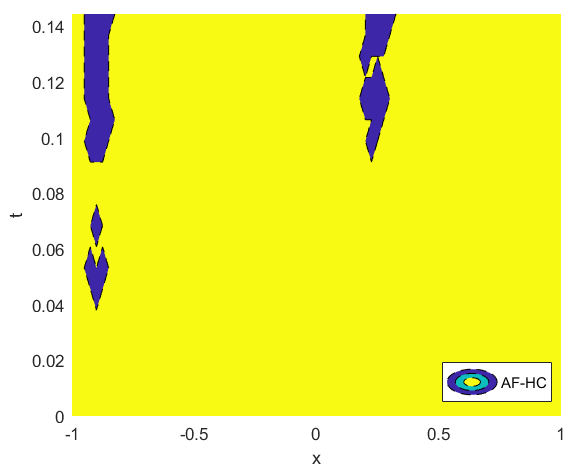}
	\caption{\small{(Example 4.) Regions of activity of $S^M$ (blue) and $S^A$ (yellow) for the F-HC scheme (left) and AF-HC scheme (right) with $\Delta x=0.025$.}\label{ex4:fig2}}
\end{figure}

{\bf Example 5: Evolution in 2D by dimensional splitting.}
We conclude this section on numerical simulations showing a convenient procedure to solve simple two-dimensional problems by making use of the one-dimensional schemes defined in the previous sections. Let us consider a classical problem similar to the Burgers' equation, which is strictly connected to (\ref{ex3:eq1}),
\begin{equation}\label{ex5:eq1}
\left\{
\begin{array}{l}
v_t+(v_x+1)^2+(v_y+1)^2=0\qquad \textrm{ in } (0,T)\times\Omega, \\
v(0,x,y)=-0.5\left(\cos(\pi x)+\cos(\pi y)\right),
\end{array}
\right.
\end{equation}
with $\Omega=[0,2]^2$ and periodic boundary conditions. As done for problem (\ref{ex3:eq1}), we consider the final time $T=\frac{4}{5\pi^2}$, when the solution is still smooth, and then $T=\frac{3}{2\pi^2}$, time at which an interesting set of singularities develops. 
The exact solution is computed by the \emph{Hopf-Lax} formula,
\begin{equation*}
v(t,x,y)=\left(\min_{a\in A} \frac{1}{2}\cos(x-at)+\frac{1}{4}a^2-a + \min_{b\in A} \frac{1}{2}\cos(y-bt)+\frac{1}{4}b^2-b\right),
\end{equation*}
with $A=[-5,5]$. 

In this situation, since the hamiltonian can be expressed as a sum of one-dimensional hamiltonian, depending on the evolution along the $x$ and $y$ direction, respectively, we can use a dimensional splitting to solve the problem. More precisely, if we write $H(v_x,v_y)=H_1(v_x)+H_2(v_y)$, we can approximate the solution by solving sequentially the problems in one space dimension
$$
v_t+H_1(v_x)=0\qquad\textrm{and}\qquad v_t+H_2(v_y)=0,
$$
keeping each time the other space variable constant. Since the hamiltonians trivially commute, 
we can use the simple \emph{Lie-Trotter} splitting
\begin{equation}
u^{n+1}=S^{\Delta t}_y\left(S^{\Delta t}_x(u^n)\right),
\end{equation}
where $S^{\Delta t}_x$ and $S^{\Delta t}_y$ are numerical schemes of time step $\Delta t$ for the problems in the $x$ and $y$ direction, respectively, without introducing errors in the time evolution. For more details about dimensional splitting techniques we refer the reader to \cite{PaolucciPhD} and the references therein. 

We use the same schemes as in Example 3 and a slightly more restrictive CFL number with respect to problem (\ref{ex3:eq1}) in order to use coarser grids, which is set to $\lambda=\frac{4}{5\pi^2}\approx 0.08$ for the first test, and $\lambda=\frac{3}{4\pi^2}\approx 0.076$ for the latter.

\begin{figure}[h!]
\includegraphics[width=0.5\textwidth]{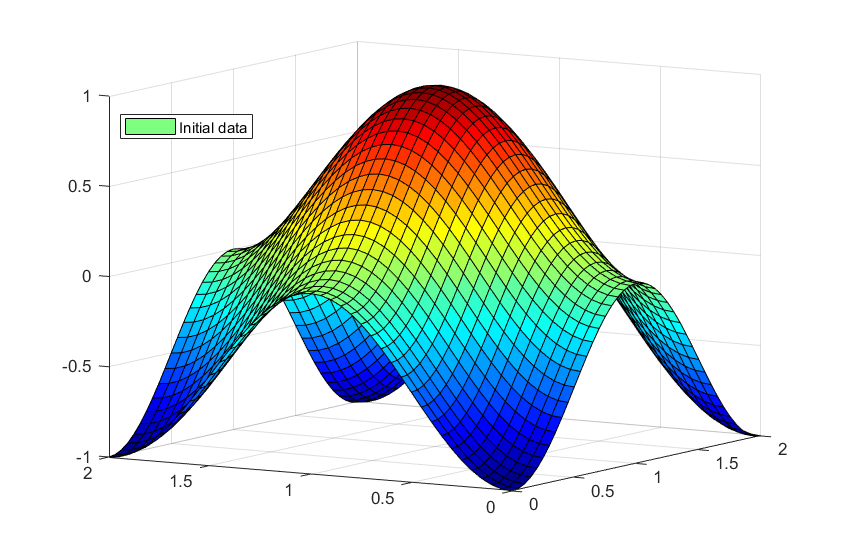}
\includegraphics[width=0.5\textwidth]{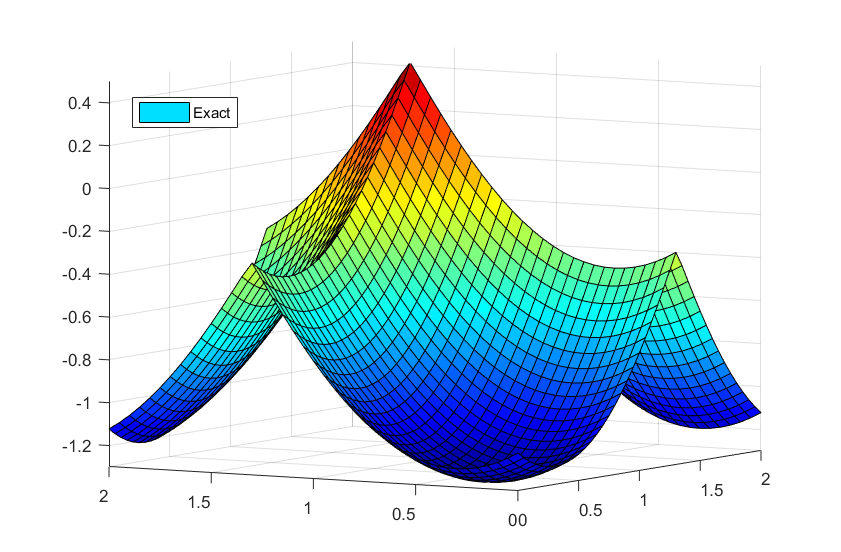}
\includegraphics[width=0.5\textwidth]{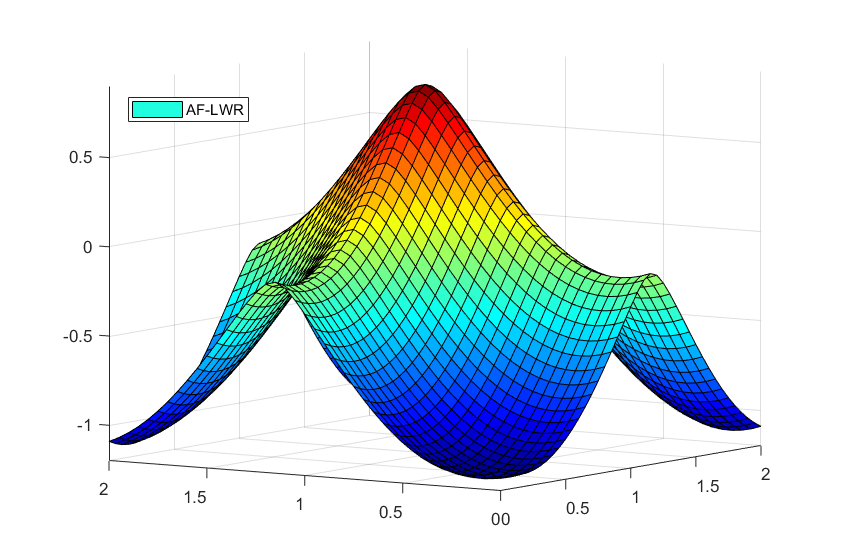}
\includegraphics[width=0.5\textwidth]{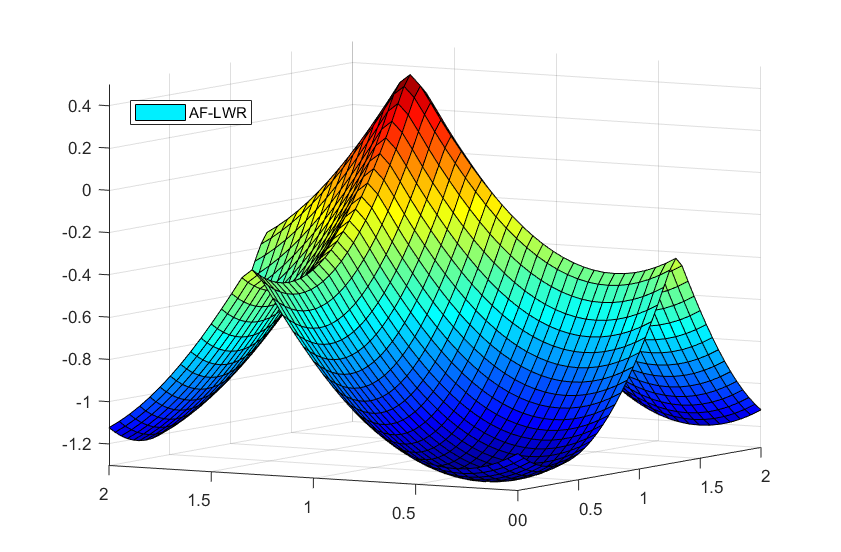}
\caption{\small{(Example 5.)} Top: Initial datum (left) and exact solution at $T=3/(2\pi^2)$ (right). Bottom: solution at $T=4/(5\pi^2)$ (left) and $T=3/(2\pi^2)$ (right) computed by the AF-LWR scheme with $\Delta x=0.1$ \label{ex5:fig1}.}
\end{figure}

\begin{table}[h!]
\caption{(Example 5.) $T=4/(5\pi^2)$. Errors and orders in $L^\infty$ and  $L^1$ norms. \label{ex5:table1}}
\vspace{-0.3 cm}
\begin{center}
\hspace*{-0.3cm}
\begin{tabular}{c c|c c|c c|c c| c c}
& & \multicolumn{2}{|c|}{\bf LWR}&\multicolumn{2}{|c|}{\bf F-LWR ($10\Delta x$)}&\multicolumn{2}{|c|}{\bf AF-LWR}&\multicolumn{2}{|c}{\bf WENO 2/3}\\
\hline
\hline
$N_x$ & $N_t$ &$L^\infty$ Err &Ord  & $L^\infty$ Err & Ord& $L^\infty$ Err & Ord & $L^\infty$ Err & Ord \\
\hline
$20$ &	$10$	& $7.45$e-$02$	&	&	$7.75$e-$02$	& & $9.35$e-$02$ & & $8.66$e-$02$			\\

$40$ &	$20$	& $3.38$e-$02$ &	$1.14$ & $5.12$e-$02$ &	$0.60$ & $3.44$e-$02$ &	$1.44$    & $3.59$e-$02$ &	$1.27$ \\

$80$ &	$40$	& $1.49$e-$02$ &	$1.18$ & $3.25$e-$02$ &	$0.66$ & $5.98$e-$03$ &	$2.52$	& $1.30$e-$02$ &	$1.47$  \\

$160$ &	$80$	& $6.42$e-$03$ &	$1.22$ & $1.94$e-$02$ &	$0.75$	& $1.78$e-$03$ &	$1.75$   & $4.87$e-$03$ &	$1.41$\\
\hline
\hline
$N_x$ & $N_t$ &$L^1$ Err &Ord  & $L^1$ Err & Ord& $L^1$ Err & Ord & $L^1$ Err & Ord \\
\hline
$20$ &	$10$ &  $3.67$e-$02$  &	&	$4.42$e-$02$	& & $4.15$e-$02$	& &  $3.71$e-$02$\\

$40$ &	$20$ & $9.53$e-$03$	& 	$1.94$ & $1.21$e-$02$	& 	$1.87$  & $8.97$e-$03$	& 	$2.21$   & $1.00$e-$02$	& 	$1.89$\\

$80$ &	$40$ & $2.28$e-$03$	& 	$2.06$  & $4.29$e-$03$	& 	$1.49$ & $1.83$e-$03$	& 	$2.29$	& $1.95$e-$03$	& 	$2.36$ \\

$160$ &	$80$ & $6.51$e-$04$	& 	$1.81$ & $1.70$e-$03$	& 	$1.33$	&  $5.23$e-$04$	& 	$1.81$  & $4.50$e-$04$	& $2.11$\\
\hline
\end{tabular}
\end{center}
\end{table}

\begin{table}[h!]
\caption{(Example 5.) $T=3/(2\pi^2)$. Errors and orders in $L^\infty$ and  $L^1$ norms. \label{ex5:table2}}
\vspace{-0.3 cm}
\begin{center}
\hspace*{-0.3cm}
\begin{tabular}{c c|c c|c c|c c| c c}
& & \multicolumn{2}{|c|}{\bf LWR }&\multicolumn{2}{|c|}{\bf F-LWR ($10\Delta x$) }&\multicolumn{2}{|c|}{\bf AF-LWR}&\multicolumn{2}{|c}{\bf WENO 2/3}\\
\hline
\hline
$N_x$ & $N_t$ &$L^\infty$ Err &Ord  & $L^\infty$ Err & Ord& $L^\infty$ Err & Ord & $L^\infty$ Err & Ord \\
\hline
$20$ & $20$ &  $1.69$e-$01$	&	&	$9.49$e-$02$	& & $1.12$e-$01$ & & $8.68$e-$02$			\\

$40$ & $40$ & $6.39$e-$02$ &	$1.40$ & $3.67$e-$02$ &	$1.37$ &$3.66$e-$02$ &	$1.61$    & $2.27$e-$02$ &	$1.93$ \\

$80$ & $80$ & $3.23$e-$02$ &	$0.98$ &$1.41$e-$02$ &	$1.38$ &  $1.46$e-$02$ &	$1.33$	& $9.08$e-$03$ &	$1.32$  \\

$160$ & $160$ & $2.64$e-$02$ &	$0.29$ & $3.73$e-$03$ &	$1.91$ & $3.86$e-$03$ &	$1.92$   & $2.22$e-$03$ &	$2.03$\\
\hline
\hline
$N_x$ & $N_t$ &$L^1$ Err &Ord  & $L^1$ Err & Ord& $L^1$ Err & Ord & $L^1$ Err & Ord \\
\hline
$20$ & $20$ &  $6.18$e-$02$  &	&	$4.62$e-$02$	& & $5.40$e-$02$	& &  $3.60$e-$02$\\

$40$ & $40$ & $1.74$e-$02$	& 	$1.83$ & $8.19$e-$03$	& 	$2.50$  & $8.79$e-$03$	& 	$2.62$   & $4.68$e-$03$	& 	$2.94$\\

$80$ & $80$ & $4.54$e-$03$	& 	$1.94$  & $1.88$e-$03$	& 	$2.13$ & $1.82$e-$03$	& 	$2.27$	& $6.92$e-$04$	& 	$2.76$ \\

$160$ & $160$ & $1.13$e-$03$	& 	$2.00$ & $3.73$e-$04$	& 	$2.33$	&  $3.83$e-$04$	& 	$2.25$  & $7.62$e-$05$ & 	$3.18$\\
\hline
\end{tabular}
\end{center}
\end{table}
\begin{table}[h!]
\caption{(Example 5.) $T=3/(2\pi^2)$. CPU times in seconds. \label{ex5:table3}}
\vspace{-0.2 cm}
\centering
\begin{tabular}{c c| p{1.7cm}  | p{1.7cm} | p{1.7cm}  | c }
$N_x$ & $N_t$ & \centering {\bf  LWR } & \centering {\bf F-LWR }& \centering {\bf AF-LWR}& {\bf WENO 2/3}\\
\hline
\hline
$20$ & $20$	&\centering 	$0.002\ s$ 	&\centering 	$0.002\ s$ 	&\centering $0.009\ s$  &$0.010\ s$   \\

$40$ & $40$ &\centering 	$0.015\ s$  &\centering 	$0.016\ s$   &\centering $0.055\ s$  	& $0.054\ s$    \\

$80$ & $80$ &\centering 	$0.108\ s$  &\centering  	$0.127\ s$   &\centering $0.427\ s$   	& $0.324\ s$ \\

$160$ & $160$ &\centering 	$0.867\ s$  &\centering  	$0.902\ s$   &\centering $3.642\ s$   	 & $2.569\ s$ 	\\
\hline
\end{tabular}
\end{table}

As we could expect, in this example we have analogous result with respect to  Example 3, with the AF scheme performing well in both situations and better than the F scheme in the regular case (see Tabs. \ref{ex5:table1}-\ref{ex5:table2}). Here again the simple filtered scheme has slightly better results after the singularities develop, due to the action of the $\phi$ function in the regions of singularity, but the loss of accuracy is in fact minimal. Moreover, our scheme performs as good as the WENO scheme when the solution is still regular, whereas the latter performs  better in the second case. 
Concerning the CPU times, looking at Tab. \ref{ex5:table3} we can see that, differently from the one-dimensional case, the WENO scheme is faster than the AF scheme in the last two refinements, whereas the F-LWR scheme is comparable to the simple LWR high-order scheme and both are three/four times faster than the other two considered schemes. 

\section{Conclusions}\label{sec:conclusions}
We have presented a rather simple way to construct convergent schemes coupling a monotone and a high-order scheme via a filter function. A typical feature of filtered schemes is their  high-order accuracy in the regions of regularity for the solution. In fact, the filter function 
can stabilize an otherwise unstable (high-order) scheme, still preserving its accuracy. The main  novelty here is the adaptive and automatic choice of the parameter $\varepsilon^n$ which improves the scheme in \cite{BFS16}. The computation of the switching parameter $\varepsilon^n$, although  more expensive, is still affordable in low dimension. The adaptive  scheme is able to reduce the oscillations which may appear choosing a constant $\varepsilon$ and, as shown by the numerical tests, gives always better results. Finally, we note that the accuracy of adaptive filtered schemes is close to WENO schemes of the same order but filtered schemes are easier to implement, give a rather flexible way to couple different schemes and, as we proved, converge to the viscosity solution. 
A fully 2D	scheme has been used in our recent paper \cite{FPT_arxiv}, in which new formulas for the 2D smoothness indicators are proposed, instead of applying a splitting techniques, for the resolution of the image segmentation problem.  
Thanks to recent computations, we are able to prove a result similar to Prop. \ref{prop_beta} for our new 2D smoothness indicators. Thanks to such a result, we believe that the convergence and high-order consistency could be proven also in fully two-dimensional problems, but this is a future work. 

\subsubsection*{Acknowledgements}
We would like to thank the national group INdAM-GNCS for the financial support given to this research.


\clearpage
\appendix
\section{APPENDIX: Technical results}\label{sec:appendix}
For completeness and reader's convenience  we give the proofs of Prop. \ref{prop_beta} and of the properties of the undivided differences and the binomial coefficients involved. This analysis follows the ideas in \cite{ABBM11} where a similar analysis is developed for conservation laws.
\begin{proof}[of Proposition \ref{prop_beta}]
Let us take $r>1$ and without loss of generality, let $x_s=0$ (to simplify the notation). Moreover, we will use the convention $h:=\Delta x$. 
Let us start by reminding that, using the Newton form of the interpolating polynomial, for $k=0,\dots,r-1$ and $j\in \mathbb Z$, we get
\begin{equation}
		P_k(x)=f(x_{j-r+k})+\sum_{i=1}^{r} f[x_{j-r+k},\dots,x_{j-r+k+i}]\omega_{i-1}(x),
		\label{newton_j}
\end{equation}
where $\omega_i(x)=(x-x_{j-r+k})\cdots (x-x_{j-r+k+i})$ and $f[\cdot]$ denotes the divided difference of $f$. 

We proceed with the proof of \emph{\textbf{i)}}. In this case it is sufficient to observe that, since the function $f$ is regular in $\Omega \setminus \{x_s\}$, the properties of the interpolating polynomial directly give
	$$
	P_k^{(l)}(x)=f^{(l)}(x)+O(h^{r+1-l}),\qquad \textrm{ for }x_{j-1}\leq x\leq x_j,\ k=0,\dots,r-1.
	$$
	Moreover, expanding with Taylor, it holds 
	\begin{equation}
	f^{(l)}(x)=f^{(m)}(x_j)O(h)^{m-l}+o(h^{m-l}),
	\end{equation}
	where $m=\max\{s+1,l\}$ and $s=\max\{k:f^{(i)}(x_j)=0,\ \forall i\leq k\}$ ($s\leq r$). Then, integrating (remembering that by hypothesis $s=0 \Rightarrow m=l$), we get
	$$
	h^{2l-3}\int_{x_{j-1}}^{x_j} \left(P_k^{(l)}(x)\right)^2dx= h^{2l-2}\left(f^{(l)}(x_j)\right)^2+o(h^{2l-2}),
	$$
	as we wanted. 	

Let us continue with the proof of \emph{\textbf{ii)}}. In this case the proof is a little more complicated and it is better to treat separately the following two cases:
\begin{enumerate}[(a)]
		\item $0$ is a point of the grid $\{x_i\}$, $i \in \mathbb Z$;
		\item $0\in I_i=(x_{i-1},x_i)$ for some $i \in \mathbb Z$.
\end{enumerate}
\underline{\emph{Case a.}} By hypothesis $0\in \mathcal S_{j+k}$ for at least one $k=0,\dots,r-1$, then, for each fixed $k$, there exists an integer $j_s\in\{k-r+1,\dots,k-1\}$ such that $x_j=-j_sh$ (for $j_s=k-r$ and $j_s=k$ we fall in the case treated previously). Substituting in (\ref{newton_j}),
	$$
	P_k(x)=f((-j_s+k-r)h)+\sum_{i=1}^{r} f[(-j_s+k-r)h,\dots,(-j_s+k-r+i)h]\omega_{i-1}(x),
	$$
with $\omega_i(x)=(x+(j_s-k+r)h)\cdots (x+(j_s-k+r-i)h)$. Moreover, if we define the function $f_h(y):=f(x_j+hy)=f(h(y-j_s))$, 
we can write
\begin{align*}
		f[x_{j-r+k},\dots,x_{j-r+k+i}]&=f[x_j+(k-r)h,\dots,x_j+(k-r+i)h]\\
		&=\frac{f_h[k-r,\dots,k-r+i]}{i! h^i},
\end{align*}
where $f_h[\cdot]$ denotes the undivided difference of $f_h$. 
Now, defining the polynomial
	\begin{equation}
		\label{formula_Qk}
		Q_k(y):=P_k(x_j+hy)=f_h(k-r)+\sum_{i=1}^{r} f_h[k-r,\dots,k-r+i]\frac{q_{i-1}(y)}{i!},
	\end{equation}
	where $q_i(y)=(y-(k-r))\cdots (y-(k-r-i))$, $f_h(y)=f(x_j+hy)$, we can rewrite
	\begin{equation}
		P_k^{(l)}(x)=\frac{d^l}{d x^l}\left(Q_k\left(\frac{x}{h}\right)\right)=\frac{1}{h^l}Q_k^{(l)}(y),\qquad l=1,\dots,r.
		\label{dP_dQ}
	\end{equation}
	Then, applying the change of variable $y=(x-x_j)/h$ in the integral in (\ref{beta}), we have
	\begin{equation}
		h^{2l-3}\int_{x_{j-1}}^{x_j} \left(P_k^{(l)}(x)\right)^2dx= h^{-2}\int_{-1}^0 \left(Q_k^{(l)}(y)\right)^2dy,
		\label{intP_intQ}
	\end{equation}
	where 
	\begin{equation}
		\label{formula_Qkl}
		Q_k^{(l)}(y)=\sum_{i=l}^r f_h[(k-r),\dots,(k-r+i)]\frac{q_{i-1}^{(l)}(y)}{i!}.
	\end{equation}
	At this point, it is useful to notice that (\ref{fh_il2}) for $l=1$ reads, for $i=1,\dots,r$,
	$$
	f_h[k-r,\dots,(k-r+i)]=\sum_{j=0}^{i-1}\binom{i-1}{j}(-1)^{i-j-1}f_h[(k-r+j),(k-r+j+1)].
	$$
	In order to simplify the notation let us call $i_s:=j_s-k+r$, that is to say the index $i_s\in\{1,\dots,r-1\}$ such that $x_j+(k-r+i_s)h=0$.
	Then, by hypothesis, we can write for all $i>t:=\max\{i_s,l-1\}$,
	\begin{align*}
		f_h[k-r,\dots,(k-r+i)]&=\sum_{j=0}^{i_s-1}\binom{i-1}{j}(-1)^{i-j-1}f_h[(k-r+j),(k-r+j+1)]\\
		&\quad +\sum_{j=i_s}^{i-1}\binom{i-1}{j}(-1)^{i-j-1}f_h[(k-r+j),(k-r+j+1)],
	\end{align*}
	and, noticing that for $h\to 0$
	$$
	f_h[z+j_s,z+j_s+1]=h\frac{f((z+1)h)-f(zh)}{h}\to
	\left\{
	\begin{array}{ll}
	hf^\prime(0^+)\qquad&\textrm{ if }z\geq 0\\
	hf^\prime(0^-)&\textrm{ otherwise }
	\end{array}
	\right.
	$$
	we can conclude that
	\begin{align*}
		f_h[k-r,\dots,(k-r+i)]&\approx h\left[\sum_{j=0}^{i_s-1}\binom{i-1}{j}(-1)^{i-j-1}f^\prime(0^-)+\sum_{j=i_s}^{i-1}\binom{i-1}{j}(-1)^{i-j-1}f^\prime(0^+)\right]\\
		&=h\left[f^\prime(0^+)-f^\prime(0^-)\right]\sum_{j=i_s}^{i-1}\binom{i-1}{j}(-1)^{i-j-1}\\
		&=h\left[f^\prime(0^+)-f^\prime(0^-)\right]\binom{i-2}{i_s-1}(-1)^{i-i_s+1}\not =0,
	\end{align*}
	having exploited the relations $\sum_{j=0}^{i}\binom{i}{j}(-1)^{i-j}=0$ and $\sum_{j=0}^{n}\binom{i}{j}(-1)^{i-j}=\binom{i-1}{n}(-1)^{i-n}$, for $0\leq n<i$ by Lemma \ref{lemma1}. 
	Furthermore for $l\leq i\leq i_s$, using the relation \eqref{prop_bin}, we can conclude
	$$
	f_h[k-r,\dots,(k-r+i)]\approx h\sum_{j=0}^{i-1}\binom{i-1}{j}(-1)^{i-j-1}f^\prime(0^-)=0.
	$$
	From what we have done so far we can deduce, recalling that $t:=\max\{i_s,l-1\}$,
	\begin{align*}
		h^{2l-3}\int_{x_{j-1}}^{x_j} \left(P_k^{(l)}(x)\right)^2dx&= h^{-2}\int_{-1}^0\left(\sum_{i=t+1}^r f_h[k-r,\dots,(k-r+i)]\frac{q_{i-1}^{(l)}}{i!}\right)^2\\
		&\approx C_{rk}\left[f^\prime(0^+)-f^\prime(0^-)\right]^2,
	\end{align*}
	where $C_{rk}=\int_{-1}^0\left(\sum_{i=t+1}^r \binom{i-2}{i_s-1}(-1)^{i-i_s+1}\frac{q_{i-1}^{(l)}}{i!}\right)^2$, which is the thesis for Case \emph{a.}
	
\noindent 
\underline{\emph{Case b.}} By hypothesis there exists an integer $j_s\in\{k-r+1,\dots,k\}$ and a number $0<a_s<1$ such that $x_j=(-j_s+a_s)h$. It is clear now that we can repeat the same constructions of the previous case defining the function $f_{a_s,h}(y):=f(h(y-j_s+a_s))$ and using it in place of $f_h$; so, to obtain (\ref{intP_intQ}) it will suffice to apply the change of variables $y=\frac{x}{h}+j_s-a_s$. Then, naming $i_s=j_s-k+r$, for $i\geq t:=\max\{i_s,l\}$,
	\begin{align}
		f_{a_s,h}[k-r,\dots,(k-r+i)]&=\sum_{j=0}^{i_s-2}\binom{i-1}{j}(-1)^{i-j-1}f_{a_s,h}[(k-r+j),(k-r+j+1)] \nonumber\\ 
		&\quad+\binom{i-1}{i_s-1}(-1)^{i-i_s}f_{a_s,h}[j_s-1,j_s] \nonumber \\
		&\quad+\sum_{j=i_s}^{i-1}\binom{i-1}{j}(-1)^{i-j-1}f_{a_s,h}[(k-r+j),(k-r+j+1)],
		\label{f_ah}
	\end{align}
	whence, noticing that
	\begin{align*}
		f_{a_s,h}[j_s-1,j_s]&=f(a_sh)-f((a_s-1)h)\\
		&=a_sh\left(\frac{f(a_sh)-f(0)}{a_sh}\right)+(1-a_s)h\left(\frac{f(0)-f((a_s-1)h)}{(1-a_s)h}\right)\\
		&\approx a_shf^\prime(0^+)+(1-a_s)hf^\prime(0^-)\\
		&=a_sh\left[f^\prime(0^+)-f^\prime(0^-)\right]+hf^\prime(0^-),
	\end{align*}
	and that
	$$
	f_{a_s,h}[z+j_s-1,z+j_s]\to
	\left\{
	\begin{array}{ll}
	hf^\prime(0^+)\qquad&\textrm{ if }z\geq 1\\
	hf^\prime(0^-)&\textrm{ if }z\leq -1,
	\end{array}
	\right.
	$$
	we can infer that if $i=i_s$ (in this case in (\ref{f_ah}) on the right side of the equation we have only the second term), then $f_{a_s,h}[k-r,\dots,(k-r+i)]\approx a_s h\left[f^\prime(0^+)-f^\prime(0^-)\right]\not=0$, whereas 
	if $i>i_s$,
	$$
	f_{a_s,h}[k-r,\dots,(k-r+i)]\approx h\left[f^\prime(0^+)-f^\prime(0^-)\right]\left[\binom{i-2}{i_s-1}(-1)^{i-i_s+1}+a_s\binom{i-1}{i_s-1}(-1)^{i-i_s}\right].
	$$
	The last quantity, as it can be easily shown, it is null if and only if $a_s=\frac{i-i_s}{i-1}$; more precisely, for $k$ fixed there exists an integer $i\geq t$ such that $f_{a_s,h}[k-r,\dots,(k-r+i)]\approx Ch\left[f^\prime(0^+)-f^\prime(0^-)\right]$ with $C\not =0$, whence the thesis even in the last case. 
\end{proof}
\begin{lemma}
	\label{lemma1}
	Let us assume $i\geq1$ and write $f[\cdot]$ for the undivided difference of a function $f$. Then, it holds
	\begin{equation}
		f[0,\dots,i]=\sum_{j=0}^{i-l}\binom{i-l}{j}(-1)^{i-l-j}f[j,\dots,j+l],\qquad\textrm{ for }l=0,\dots,i.
		\label{fh_il2}
	\end{equation}
	Moreover, we have that
	\begin{equation}
		\sum_{j=0}^{n}\binom{i}{j}(-1)^{i-j}=\left\{
		\begin{array}{ll}
			\binom{i-1}{n}(-1)^{i-n}\quad&\textrm{ for }n<i\\
			0& \textrm{ for } n=i.
		\end{array}
		\right.
		\label{prop_bin}
	\end{equation}
\end{lemma}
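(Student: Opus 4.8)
The plan is to observe that for equally spaced nodes the undivided difference is just the iterated forward difference. Writing $\Delta g_j:=g_{j+1}-g_j$ for the forward difference operator and $E g_j:=g_{j+1}$ for the shift, the defining recurrence $f[j,\dots,j+l]=f[j+1,\dots,j+l]-f[j,\dots,j+l-1]$ gives, by an immediate induction on $l$, that $f[j,\dots,j+l]=\Delta^l f_j=(E-\mathrm{id})^l f_j$; in particular $f[0,\dots,i]=\Delta^i f_0$. This is exactly the convention used earlier in the paper, since there $f_h[z,z+1]=f((z+1)h)-f(zh)$ is a forward difference. Both identities then reduce to manipulations with the commuting operators $E$ and $\Delta=E-\mathrm{id}$.

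For \eqref{fh_il2} I would factor $\Delta^i=\Delta^{\,i-l}\Delta^l$ and expand the first factor by the binomial theorem, legitimate because $E$ and $\mathrm{id}$ commute:
\[
\Delta^{\,i-l}=(E-\mathrm{id})^{\,i-l}=\sum_{j=0}^{i-l}\binom{i-l}{j}(-1)^{\,i-l-j}E^{\,j}.
\]
Applying this operator to $\Delta^l f_0$ and using $E^{\,j}\Delta^l f_0=\Delta^l f_j=f[j,\dots,j+l]$ yields precisely $f[0,\dots,i]=\sum_{j=0}^{i-l}\binom{i-l}{j}(-1)^{\,i-l-j}f[j,\dots,j+l]$. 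If a purely elementary argument is preferred, the same identity follows by induction on $m:=i-l$: the case $m=0$ is trivial, and the step uses the recurrence $f[0,\dots,i]=f[1,\dots,i]-f[0,\dots,i-1]$, the inductive hypothesis applied to each of the two blocks of length $i$, a reindexing $j\mapsto j-1$ in the sum coming from $f[1,\dots,i]$, and Pascal's rule $\binom{m}{j}=\binom{m-1}{j}+\binom{m-1}{j-1}$; the extreme terms $j=0$ and $j=m$ match because $\binom{m-1}{0}=\binom{m}{0}$ and $\binom{m-1}{m-1}=\binom{m}{m}$.

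For \eqref{prop_bin} I would first pull out the sign, noting $(-1)^{i-j}=(-1)^i(-1)^j$, so it suffices to prove $\sum_{j=0}^{n}(-1)^j\binom{i}{j}=(-1)^n\binom{i-1}{n}$ for $0\le n\le i$ and then multiply through by $(-1)^i$, using $(-1)^i(-1)^n=(-1)^{i-n}$ together with $\binom{i-1}{i}=0$ to recover the $n=i$ case. The remaining identity telescopes: substituting $\binom{i}{j}=\binom{i-1}{j}+\binom{i-1}{j-1}$ and shifting the index in the second part, the $j=0$ (i.e. $k=-1$) term vanishing,
\[
\sum_{j=0}^{n}(-1)^j\binom{i}{j}=\sum_{j=0}^{n}(-1)^j\binom{i-1}{j}-\sum_{k=0}^{n-1}(-1)^k\binom{i-1}{k}=(-1)^n\binom{i-1}{n}.
\]
This settles \eqref{prop_bin}.

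I do not expect a genuine obstacle here: everything is the elementary calculus of the operator $\Delta=E-\mathrm{id}$. The only point that requires care is the bookkeeping — making the identification $f[j,\dots,j+l]=E^{\,j}\Delta^l f_0$ explicit, so that translation invariance of forward differences may be used freely, and keeping the reindexings and the boundary terms $j=0$, $j=i-l$ straight in the two Pascal-rule computations. Once that identification is in place, both displayed formulas are immediate.
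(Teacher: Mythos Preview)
Your argument is correct and, in fact, somewhat cleaner than the paper's. The paper proves \eqref{fh_il2} by induction on $i$ with $l$ fixed: it splits $f[0,\dots,i]=f[1,\dots,i]-f[0,\dots,i-1]$, applies the inductive hypothesis to each term, reindexes, and combines via Pascal's rule $\binom{m-1}{j-1}+\binom{m-1}{j}=\binom{m}{j}$. Your primary route replaces this by the single observation $\Delta^i=\Delta^{i-l}\Delta^l$ together with the binomial expansion of $(E-\mathrm{id})^{i-l}$, which is more conceptual and avoids the bookkeeping of the induction; the inductive alternative you sketch is exactly the paper's proof. For \eqref{prop_bin} the paper again argues by induction on $n$, while you give a one-line telescoping argument using $\binom{i}{j}=\binom{i-1}{j}+\binom{i-1}{j-1}$, which is shorter and handles the case $n=i$ uniformly through $\binom{i-1}{i}=0$. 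Both approaches are elementary; yours trades two separate inductions for a single operator identification $f[j,\dots,j+l]=E^{j}\Delta^{l}f_0$, which is what you correctly flag as the only point needing care.
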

\begin{proof}
	Let us start from the proof of (\ref{fh_il2}) and let us proceed by induction on $i$.
	
	Firstly, let us notice that for $l=i$ the identity is trivially satisfied, whence the case $i=1$ follows directly. Then, for any $l=0,\dots,i-1$, suppose that the statement holds for $i-1$ and for $i>0$ let us compute,
	$$
	\begin{alignedat}{3}
	f[0,\dots,i]&=f[1,\dots,i]-f[0,\dots,i-1]\qquad&&\textrm{by definition of}f[\cdot]\\
	&=\sum_{j=0}^{i-l-1}\binom{i-l-1}{j}(-1)^{i-l-1-j}f[j+1,\dots,j+1+l]&\\
	&\quad-\sum_{j=0}^{i-l-1}\binom{i-l-1}{j}(-1)^{i-l-1-j}f[j,\dots,j+l]&&\textrm{by inductive hyp.}\\
	&=f[i-l,\dots,i]+(-1)^{i-l}f[0,\dots,l]\\
	&\quad +\sum_{j=1}^{i-l-1}\binom{i-l-1}{j-1}(-1)^{i-l-j}f[j,\dots,j+l]&&\\
	&\quad+\sum_{j=1}^{i-l-1}\binom{i-l-1}{j}(-1)^{i-l-j}f[j,\dots,j+l]&& \\
	&=f[i-l,\dots,i]+(-1)^{i-l}f[0,\dots,l]\\
	&\quad+\sum_{j=1}^{i-l-1}\binom{i-l}{j}(-1)^{i-l-j}f[j,\dots,j+l]\qquad\quad&&\binom{n-1}{k-1}+\binom{n-1}{k}=\binom{n}{k} \\
	&=\sum_{j=0}^{i-l}\binom{i-l}{j}(-1)^{i-l-j}f[j,\dots,j+l],&&
	\end{alignedat}
	$$
	as we wanted.
	\begin{remark}
		To simplify the notation we have stated the result for $f[0,\dots,i]$ but the proof clearly holds for $f[k,\dots,k+i]$, $\forall k$. In the second identity of the previous chain we have assumed this fact applying the inductive hypothesis on both terms.
	\end{remark}
	
	Let us focus now on the second relation of the lemma (\ref{prop_bin}) and proceed again by induction, but this time on $n:0\leq n<i$.
	For $n=0$ we have $(-1)^i=(-1)^i$, then the identity holds. Suppose that (\ref{prop_bin}) holds for $n-1<i-1$ and compute
	$$
	\begin{alignedat}{3}
	\sum_{j=0}^n\binom{i}{j}(-1)^{i-j}&=\sum_{j=0}^{n-1}\binom{i}{j}(-1)^{i-j}+\binom{i}{n}(-1)^{1-n}\\
	&=\binom{i-1}{n-1}(-1)^{i+1-n}+\binom{i}{n}(-1)^{i-n}\qquad&\textrm{ by inductive hyp.}\\
	&=\binom{i-1}{n-1}(-1)^{i+1-n}-\left[\binom{i-1}{n}+\binom{i-1}{n-1}\right](-1)^{i+1-n}\\
	&=\binom{i-1}{n}(-1)^{i-n}.
	\end{alignedat}
	$$
	For $n=i$ instead, from what we have just seen we can easily compute
	\begin{align*}
		\sum_{j=0}^i\binom{i}{j}(-1)^{i-j}&=\sum_{j=0}^{i-1}\binom{i}{j}(-1)^{i-j}+(-1)^{i-i}\\
		&=\binom{i-1}{i-1}(-1)^{i-i+1}+1\\
		&=-1+1=0.
	\end{align*}
\end{proof}

\end{document}